\newtheorem{Theorem}{\bf Theorem}[section]
\newtheorem{Proposition}[Theorem]{\bf Proposition}
\newtheorem{Lemma}[Theorem]{\bf Lemma}
\newtheorem{Corollary}[Theorem]{\bf Corollary}
\newtheorem{Remark}[Theorem]{\bf Remark}
\newtheorem{Example}[Theorem]{\bf Example}
\newtheorem{Definition}[Theorem]{\bf Definition}
\newcommand{\fa}{\mbox{$\mathfrak{a}$}}
\newcommand{\fb}{\mbox{$\mathfrak{b}$}}
\newcommand{\fm}{\mbox{$\mathfrak{m}$}}
\newcommand{\fn}{\mbox{$\mathfrak{n}$}}
\newcommand{\fp}{\mbox{$\mathfrak{p}$}}
\newcommand{\fq}{\mbox{$\mathfrak{q}$}}
\newcommand{\bn}{\mbox{$\mathbb{N}$}}
\newcommand{\bz}{\mbox{$\mathbb{Z}$}}
\newcommand{\bq}{\mbox{$\mathbb{Q}$}}
\newcommand{\bc}{\mbox{$\mathbb{C}$}}
\newcommand{\ba}{\mbox{$\mathbb{A}$}}
\newcommand{\cm}{\mbox{$\mathcal{M}$}}
\begin{document}

\title{Ideals of Herzog-Northcott type}

\author{{\sc Liam O'Carroll and Francesc Planas-Vilanova}}

\date{\today}

\subjclass[2000]{13A15, 13C40, 13H15, 13D02}

\begin{abstract}
This paper takes a new look at ideals generated by $2\times 2$ minors
of $2\times 3$ matrices whose entries are powers of three elements not
necessarily forming a regular sequence. A special case of this are the
ideals determining monomial curves in three dimensional space, which
were already studied by Herzog. In the broader context studied here,
these ideals are identified as Northcott ideals in the sense of
Vasconcelos, and so their liaison properties are displayed. It is
shown that they are set-theoretically complete intersections,
revisiting the work of Bresinsky and of Valla. Even when the three
elements are taken to be variables in a polynomial ring in three
variables over a field, this point of view gives a larger class of
ideals than just the defining ideals of monomial curves. We then
characterize when the ideals in this larger class are prime, we show
that they are usually radical and, using the theory of multiplicities,
we give upper bounds on the number of their minimal prime ideals, one
of these primes being a uniquely determined prime ideal of definition
of a monomial curve. Finally, we provide examples of
characteristic-dependent minimal prime and primary structures for these
ideals.
\end{abstract}

\keywords{Herzog ideal; Northcott ideal; almost complete intersection;
liaison; associative law of multiplicities}

\maketitle

\section{Introduction}\label{introduction}

Let $A$ be a commutative Noetherian ring (with identity) and
$x_{1},x_{2},x_{3}$ a sequence of elements of $A$ generating a proper
ideal of height 3. Set $\bn_{0}=\bn\cup\{0\}$ and take
$a=(a_{1},a_{2},a_{3})\in \bn_{0}^{3}$ and
$b=(b_{1},b_{2},b_{3})\in\bn_{0}^{3}$. Let $c=a+b$,
$c=(c_{1},c_{2},c_{3})$. Let $\cm$ be the matrix
\begin{eqnarray*}
\cm=\left(\begin{array}{ccc}x_{1}^{a_{1}}&x_{2}^{a_{2}}&x_{3}^{a_{3}}
  \\ x_{2}^{b_{2}}&x_{3}^{b_{3}}&x_{1}^{b_{1}}\end{array}\right), 
\end{eqnarray*}
and $v_{1}=x_{1}^{c_{1}}-x_{2}^{b_{2}}x_{3}^{a_{3}}$,
$v_{2}=x_{2}^{c_{2}}-x_{1}^{a_{1}}x_{3}^{b_{3}}$ and
$D=x_{3}^{c_{3}}-x_{1}^{b_{1}}x_{2}^{a_{2}}$, the $2\times 2$ minors
of $\cm$ up to a change of sign. Consider
$I_{2}(\cm)=(v_{1},v_{2},D)$, the determinantal ideal generated by the
$2\times 2$ minors of $\cm$. Note that
$x_{2}^{b_{2}}D=-x_{3}^{b_{3}}v_{1}-x_{1}^{b_{1}}v_{2}$, so that, if
$b_{2}=0$, then $I_{2}(\cm)=(v_{1}, v_{2})$.

Our motivation to consider these ideals comes from the following
well-known result of Herzog in \cite{herzog} (see also \cite[pages
  138-139]{kunz}). Take the irreducible affine space curve of
$\ba^{3}_{k}=k^{3}$, $k$ a field, given by the parametrization
$x_{1}=t^{n_{1}}$, $x_{2}=t^{n_{2}}$, $x_{3}=t^{n_{3}}$, where
$n=(n_{1},n_{2},n_{3})\in\bn^{3}$ ($n_{i}>0$), with ${\rm
  gcd}(n_{1},n_{2},n_{3})=1$. Let $\fp_{n}$ be the vanishing ideal of
this curve, i.e., the height 2 prime ideal of the polynomial ring in
three variables $A=k[x_{1},x_{2},x_{3}]$ defined as the kernel of the
natural morphism $\varphi:k[x_{1},x_{2},x_{3}]\rightarrow k[t]$,
$\varphi (x_{i})=t^{n_{i}}$. We will call $\fp_{n}$ the {\em Herzog
  ideal associated to} $n=(n_{1},n_{2},n_{3})$ (see
Section~\ref{prime} for more details concerning this definition; note
that Huneke in \cite{huneke1} used the term Herzog ideals for a
different class of ideals).  Herzog proved that $\fp_{n}$ is either a
complete intersection or an almost complete intersection ideal (in the
sense of \cite{hmv}). Concretely, with a suitable numbering of the
variables, $\fp_{n}$ has a set of generators of one of the two
following types:
\begin{enumerate}
\item[(ci):] $v_{1}=x_{1}^{c_{1}}-x_{3}^{a_{3}}$,
  $v_{2}=x_{2}^{a_{2}}-x_{1}^{a_{1}}x_{3}^{b_{3}}$,
  $c_{1},a_{2},a_{3}\in\bn$, $a_{1}b_{3}\neq 0$ (here $b_{2}=0$);
\item[(aci):] $v_{1}=x_{1}^{c_{1}}-x_{2}^{b_{2}}x_{3}^{a_{3}}$,
  $v_{2}=x_{2}^{c_{2}}-x_{1}^{a_{1}}x_{3}^{b_{3}}$ and
  $D=x_{3}^{c_{3}}-x_{1}^{b_{1}}x_{2}^{a_{2}}$, with $a,b\in\bn^{3}$.
\end{enumerate}
In other words, a Herzog ideal can always be seen as an ideal of the
type $I_{2}(\cm)=(v_{1},v_{2},D)$, for some appropriate
$a,b\in\bn_{0}^{3}$. However, even in the case of a polynomial ring,
an ideal of the form $I_{2}(\cm)=(v_{1},v_{2},D)$ is not always a
Herzog ideal because (as we will show) it might not be a prime ideal
(see Theorem~\ref{theorem-prime}), whereas a Herzog ideal is prime by
definition.

Herzog also proved that $\fp_{n}$ is always a set-theoretic complete
intersection. Subsequently Bresinsky and Valla gave constructive
proofs that an ideal of the form $I_{2}(\cm)$ is a set-theoretic
complete intersection (\cite{bresinsky} and
\cite[Theorem~3.1]{valla}), the first in the polynomial case
$A=k[x_{1},x_{2},x_{3}]$, the latter in our setting and using a
general result on determinantal ideals proved by Eagon and Northcott
(\cite[Theorem~3]{en}).

We recently made use of Herzog ideals in order to produce a negative
answer to a long-standing question about the uniform Artin-Rees
property on the prime spectrum of an excellent ring. Concretely, for
$s\in\bn$, $s\geq 4$, and $n_{1}(s)= s^{2}-3s+1$,
$n_{2}(s)=s^{2}-3s+3$ and $n_{3}(s)=s^{2}-s+1$,
$n(s)=(n_{1}(s),n_{2}(s), n_{3}(s))\in\bn^{3}$, the one parameter
family of (non complete intersection) Herzog ideals $\fp_{n(s)}$
satisfies the relation that, for all $s\geq 4$, $\fp_{n(s)}^{s}\cap
x_{3}A\varsupsetneq \fp_{n(s)}(\fp_{n(s)}^{s-1}\cap x_{3}A)$ (see
\cite{op}).

On the other hand, in \cite{northcott}, Northcott considered the
following situation: let $u=u_{1},\ldots ,u_{r}$ and $v=v_{1},\ldots
,v_{r}$ be two sets of $r$ elements of a Noetherian ring $A$, connected
by the relations:
\begin{eqnarray*}
\left\{ \begin{array}{ccc}
v_{1}&=&a_{1,1}u_{1}+a_{1,2}u_{2}+\ldots +a_{1,r}u_{r},\\
v_{2}&=&a_{2,1}u_{1}+a_{2,2}u_{2}+\ldots +a_{2,r}u_{r},\\
&&\ldots \\
v_{r}&=&a_{r,1}u_{1}+a_{r,2}u_{2}+\ldots +a_{r,r}u_{r},
\end{array}\right.
\end{eqnarray*}
with $a_{i,j}\in A$. Let $D$ stand for the determinant of the $r\times
r$ matrix $\Phi=(a_{i,j})$. Northcott proved that if $(v_{1},\ldots ,
v_{r})$ has grade $r$ and $(v_{1},\ldots ,v_{r},D)$ is proper, then
the projective dimension of $A/(v_{1},\ldots ,v_{r},D)$ is $r$, and
$(v_{1},\ldots ,v_{r},D)$ and all its associated prime ideals have
grade $r$ (\cite[Theorem~2]{northcott}). Subsequently, Vasconcelos
called such an ideal $(v_{1},\ldots ,v_{r},D)$ the {\em Northcott
  ideal associated to $\Phi$ and $u$} (see e.g. \cite[page
  100]{vasconcelos}).

Take now $r=2$, $u_{1}=x_{1}^{a_{1}}$, $u_{2}=-x_{2}^{b_{2}}$ and
$v_{1}=x_{1}^{c_{1}}-x_{2}^{b_{2}}x_{3}^{a_{3}}$,
$v_{2}=x_{2}^{c_{2}}-x_{1}^{a_{1}}x_{3}^{b_{3}}$ the aforementioned
first two generators of $I_{2}(\cm)=(v_{1},v_{2},D)$. Let $\Phi$
be the $2\times 2$ matrix defined by

\begin{eqnarray*}
\Phi=\left( \begin{array}{rr}
x_{1}^{b_{1}}&x_{3}^{a_{3}}\\-x_{3}^{b_{3}}&-x_{2}^{a_{2}}
\end{array}\right),
\end{eqnarray*}
whose determinant, note, is just
$D=x_{3}^{c_{3}}-x_{1}^{b_{1}}x_{2}^{a_{2}}$, the third generator of
$I_{2}(\cm)$. We clearly have $\Phi\cdot[u]^{\top}=[v]^{\top}$. In
other words, the ideal $I_{2}(\cm)=(v_{1},v_{2},D)$ (in particular, a
Herzog ideal) can be viewed as a Northcott ideal whenever the ideal
$(v_{1},v_{2})$ has grade $2$. This fact, though simple, was extremely
useful in proving the main result in \cite{op}. As a consequence, it
awakened our interest in this family of ideals. In this paper, we
intend to study their general properties, though we will restrict
ourselves just to the case $a,b\in\bn^{3}$, i.e., $a_{i},b_{j}>0$. For
ease of reference, we state the following definition.

\begin{Definition}{\rm
Let $A$ be a commutative Noetherian ring and $x=x_{1},x_{2},x_{3}$ a
sequence of elements of $A$ generating an ideal of height 3. Let
$a=(a_{1},a_{2},a_{3})\in\bn^{3}$ and
$b=(b_{1},b_{2},b_{3})\in\bn^{3}$ ($a_{i},b_{j}>0$) and set $c=a+b$,
$c=(c_{1},c_{2},c_{3})$. Let $\cm$ be the matrix
\begin{eqnarray*}
\cm=\left(\begin{array}{ccc}x_{1}^{a_{1}}&x_{2}^{a_{2}}&x_{3}^{a_{3}}
  \\ x_{2}^{b_{2}}&x_{3}^{b_{3}}&x_{1}^{b_{1}}\end{array}\right), 
\end{eqnarray*}
and $v_{1}=x_{1}^{c_{1}}-x_{2}^{b_{2}}x_{3}^{a_{3}}$,
$v_{2}=x_{2}^{c_{2}}-x_{1}^{a_{1}}x_{3}^{b_{3}}$ and
$D=x_{3}^{c_{3}}-x_{1}^{b_{1}}x_{2}^{a_{2}}$, the $2\times 2$ minors
of $\cm$ up to a change of sign. The ideal
$I=I_{2}(\cm)=(v_{1},v_{2},D)$ will be called the {\em
  Herzog-Northcott} (HN, for short) {\em ideal associated to} $\cm$ .
}\end{Definition}

The paper is organized as follows. In Section~\ref{basic} we start
with a few preliminary results. In Section~\ref{stci}, following
Bresinsky's ideas in \cite{bresinsky}, we prove, in our general
setting, that HN ideals are set-theoretically complete intersections,
thus recovering the result of Valla. Concretely, we prove that the
element $g_{B}$ considered by Bresinsky verifies ${\rm rad}(I)={\rm
  rad}(v_{1},g_{B})$ in complete generality (see
Theorem~\ref{gb}). Comparing the element $g_{B}$ given by Bresinsky
and the one $g_{V}$ given by Valla, we show that they are equal modulo
$(v_{1})$ under mild hypotheses and are in general closely
related. Section~\ref{secgradev=2} is mainly devoted to studying the
condition ${\rm grade}(v_{1},v_{2})=2$. In Sections~\ref{licci} and
\ref{aci}, and supposing that $x=x_{1},x_{2},x_{3}$ is a regular
sequence and that $(v)$ has grade 2, we prove that HN ideals are
geometrically linked to a complete intersection and that they are
almost complete intersections (in the sense of \cite{hmv}). From
Section~\ref{prime} to the end of the paper, we restrict ourselves to
the case of the polynomial ring $A=k[x_{1},x_{2},x_{3}]$, $k$ a field
and $x=x_{1},x_{2},x_{3}$ three variables over $k$. Then, in
Section~\ref{prime}, we characterize when an HN ideal is a prime
ideal. To do this, we first consider an integer vector
$m(I)=(m_{1},m_{2},m_{3})\in\mathbb{N}^{3}$ associated to $I$ and the
corresponding Herzog (prime) ideal $\fp_{m(I)}$. Then we prove that
$I$ being prime, ${\rm gcd}(m(I))=1$ and $I=\fp_{m(I)}$ are three
equivalent conditions (see
Theorem~\ref{theorem-prime}). Section~\ref{number} is devoted to
finding a bound for the number of minimal components of an HN
ideal. Concretely, we prove that the number of minimal components of
an HN ideal $I$ is bounded above in terms of the greatest common
divisor of any pair $(m_{i},m_{j})$, $i\neq j$, where
$(m_{1},m_{2},m_{3})=m(I)$ is the integer vector associated to $I$
(Theorem~\ref{bound}). Finally, in Section~\ref{radical}, and provided
that $k$ has characteristic zero or is big enough, we prove that an HN
ideal is always radical (Theorem~\ref{theorem-radical}). We finish by
giving some illustrative examples.

\section{Preliminary results}\label{basic}

In this section, $A$ will be a commutative Noetherian ring and
$x=x_{1},x_{2},x_{3}$ a sequence of elements of $A$ generating a
proper ideal of height 3. We keep the notations of
Section~\ref{introduction}, i.e.,
$v_{1}=x_{1}^{c_{1}}-x_{2}^{b_{2}}x_{3}^{a_{3}}$,
$v_{2}=x_{2}^{c_{2}}-x_{1}^{a_{1}}x_{3}^{b_{3}}$ and
$D=x_{3}^{c_{3}}-x_{1}^{b_{1}}x_{2}^{a_{2}}$, and
$u_{1}=x_{1}^{a_{1}}$ and $u_{2}=-x_{2}^{b_{2}}$, where
$a,b\in\bn^{3}$. In particular, set $I=I_{2}(\cm)=(v_{1},v_{2},D)$.

\begin{Remark}\label{radv}{\rm
Let $I$ be an HN ideal. Then ${\rm rad}(v)={\rm rad}(I)\cap {\rm
  rad}(u)$.  
}\end{Remark}

\begin{proof}
Let $\fp$ a prime ideal of $A$. Suppose that $\fp\supseteq (v)$. Then
$x_{2}^{b_{2}}D=-x_{3}^{b_{3}}v_{1}-x_{1}^{b_{1}}v_{2}\in\fp$. If
$x_{2}\not\in\fp$, then $D\in\fp$ and $I\subseteq \fp$. On the other
hand, if $x_{2}\in\fp$, then $x_{1}\in\fp$ since $v_{1}\in\fp$. It
follows that $\fp\supseteq I$ or $\fp\supseteq (u)$, or equivalently
$\fp\supseteq I\cap (u)$. Conversely, it is immediate that if
$\fp\supseteq I\cap (u)$, then $\fp\supseteq I\supseteq (v)$ or
$\fp\supseteq (u)\supseteq (v)$.
\end{proof}

For stronger results we need more restrictive conditions; see
Proposition~\ref{ifgradev2} $(a)$ and Corollary~\ref{Icapu=v} below.
The following proposition is a direct consequence of the work of
Northcott in \cite{northcott}.

\begin{Proposition}\label{ifgradev2}
Let $I$ be an HN ideal. Suppose that ${\rm grade}(v)=2$.
\begin{itemize}
\item[$(a)$] Then $(v):D=(u)$ and $(v):I=(u)$.
\item[$(b)$] The ideals $(v)$, $I$ and $(u)$ are grade-unmixed of
  grade $2$.
\item[$(c)$] Moreover, if $x=x_{1},x_{2},x_{3}$ is a regular sequence,
  then each of $x_{1},x_{2},x_{3}$ is regular modulo $I$.
\end{itemize}
\end{Proposition}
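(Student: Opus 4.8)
The plan is to deduce everything from Northcott's theorem, which we are entitled to invoke thanks to the Northcott-ideal presentation $\Phi\cdot[u]^{\top}=[v]^{\top}$ recorded in the introduction, with $D=\det\Phi$. For part $(a)$, the inclusion $(u)\subseteq (v):D$ is immediate from $D\cdot u_{i}$ being, up to sign, the entries of $\Phi^{\mathrm{adj}}\Phi[u]^{\top}=D[u]^{\top}$... more plainly, $D u_{1}=x_{2}^{a_{2}}v_{1}+x_{3}^{a_{3}}v_{2}$ and $D u_{2}=-x_{3}^{b_{3}}v_{1}-x_{1}^{b_{1}}v_{2}$ (the Cramer-type relations coming from $\Phi^{\mathrm{adj}}[v]^{\top}=D[u]^{\top}$), so $Du_{i}\in (v)$. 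For the reverse inclusion I would argue that since $\mathrm{grade}(v)=2$ and $(v,D)=I$ is proper (because $I$ has height $\le 3<\infty$ in a ring where $(x)$ is proper; here one just needs $I\ne A$, which follows as the $v_i,D$ all lie in $(x)$), Northcott's theorem gives that $A/I$ has a minimal free resolution of length $2$, and in fact the relevant piece is that $(v):D=(u)$ is exactly the content of the second syzygy in Northcott's construction. Concretely, any $z$ with $zD\in (v)$ gives, after multiplying the relation $z D=\alpha_1 v_1+\alpha_2 v_2$ through by $\Phi^{\mathrm{adj}}$ and using $\mathrm{grade}(v)=2$ to cancel, that $z\in (u)$; this is precisely Northcott's lemma that the Koszul-type relations are the only ones. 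Then $(v):I=(v):(v_1,v_2,D)=(v):D=(u)$, since $(v):v_i=A$.

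For part $(b)$: once $(a)$ is in hand, $(v)$ is grade-unmixed of grade $2$ by Northcott (a grade-$2$ ideal generated by a ``generic'' regular sequence of length $2$ in the sense that its associated primes all have grade $2$ — actually more simply, $(v)$ is generated by two elements and has grade $2$, hence is a grade-$2$ perfect ideal and so grade-unmixed, being unmixed of height/grade $2$). For $I$: Northcott's theorem states directly that $I=(v_1,v_2,D)$ together with all its associated primes has grade $2$, which is the assertion that $I$ is grade-unmixed of grade $2$. For $(u)=(x_1^{a_1},x_2^{b_2})$: this is a two-generated ideal, and its grade equals $2$ because $\mathrm{grade}(u)\ge\mathrm{grade}(v)=2$ by Remark~\ref{radv} (since $\mathrm{rad}(v)\subseteq\mathrm{rad}(u)$ gives $\mathrm{grade}(v)\le\mathrm{grade}(u)$), while $\mathrm{grade}(u)\le 2$ as $(u)$ is two-generated; then $(u)$ generated by a regular sequence of length $2$ is grade-unmixed of grade $2$ (it is perfect, hence unmixed).

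For part $(c)$: assume now $x=x_1,x_2,x_3$ is a regular sequence, so $\mathrm{grade}(v)=2$ is automatic and $A$ behaves like a polynomial-type situation locally. I want each $x_i$ to be a nonzerodivisor modulo $I$, equivalently $x_i$ avoids every associated prime $\mathfrak p$ of $I$. By $(b)$ every such $\mathfrak p$ has grade $2$, hence height $2$ in the relevant localization; so it suffices to show $x_i\notin\mathfrak p$ for each associated prime. Suppose $x_1\in\mathfrak p$. Then from $v_1=x_1^{c_1}-x_2^{b_2}x_3^{a_3}\in I\subseteq\mathfrak p$ we get $x_2^{b_2}x_3^{a_3}\in\mathfrak p$, so $x_2\in\mathfrak p$ or $x_3\in\mathfrak p$; similarly from $v_2$ and $D$ one forces a second, then a third variable into $\mathfrak p$, so that $\mathfrak p\supseteq (x_1,x_2,x_3)$, whence $\mathrm{grade}(\mathfrak p)\ge 3$, contradicting $\mathrm{grade}(\mathfrak p)=2$. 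The same symmetric argument disposes of $x_2$ and $x_3$. I expect the main obstacle to be part $(a)$: getting the reverse inclusion $(v):D\subseteq (u)$ cleanly — one must be careful that this is genuinely Northcott's structural statement about the second syzygy and not merely a Cramer's-rule manipulation, and in particular the hypothesis $\mathrm{grade}(v)=2$ is used essentially there (without it the inclusion can fail). Once $(a)$ is secured, $(b)$ and $(c)$ are short.
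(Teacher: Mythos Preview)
Your argument is correct and follows the paper's approach closely: part $(a)$ is Northcott's Proposition~1 (plus the trivial $(v):I=(v):D$), part $(b)$ invokes Kaplansky for $(v)$ and Northcott's Theorem~2 for $I$, and part $(c)$ is the same contradiction via $\mathrm{grade}(\mathfrak p)=2$.

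There is one genuine difference worth noting. For the unmixedness of $(u)$, the paper argues indirectly: from a minimal primary decomposition of $(v)$ and the equality $(v):D=(u)$ one extracts a primary decomposition of $(u)$, which is therefore grade-unmixed. Your route instead shows $\mathrm{grade}(u)=2$ directly (since $(v)\subseteq (u)$ and $(u)$ is two-generated) and then appeals to perfection. Both work; yours is self-contained, while the paper's method also displays $\mathrm{Ass}(A/(u))\subseteq\mathrm{Ass}(A/(v))$, which is used later.

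Two small corrections. First, your phrase ``$(u)$ generated by a regular sequence of length $2$'' is not justified in a general Noetherian ring: $\mathrm{grade}(u)=2$ with two generators gives acyclicity of the Koszul complex (hence perfection, hence unmixedness), but not that $u_1,u_2$ themselves form a regular sequence without a local or graded hypothesis. Your parenthetical ``(it is perfect, hence unmixed)'' is the correct argument; drop the regular-sequence claim. Second, in part $(c)$ you write ``$\mathrm{grade}(v)=2$ is automatic'' once $x_1,x_2,x_3$ is a regular sequence. This is false in general (the paper's Example~4.4 gives a regular sequence $x_3,x_1,x_2$ in a polynomial ring with $\mathrm{grade}(v)=1$); fortunately $\mathrm{grade}(v)=2$ is already a standing hypothesis of the proposition, so the error is harmless, but the remark should be removed.
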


\begin{proof}
The first part of $(a)$ follows from \cite[Proposition~1]{northcott}
(which does not require the ring to be local), and that $(v):I=(u)$
follows from the equality $(v):D=(u)$. By
\cite[Theorem~130]{kaplansky}, $(v)$ is grade-unmixed and by
\cite[Theorem~2]{northcott}, $I$ also has grade 2 and is
grade-unmixed.  From a minimal primary decomposition of $(v)$ and the
equality $(v):D=(u)$, one can extract another minimal primary
decomposition of $(u)$, and hence $(u)$ is grade-unmixed and clearly
has grade 2. Moreover, if $x=x_{1},x_{2},x_{3}$ is a regular sequence
and if some $x_{i}$ were in an associated prime $\fp$ of $I$, then
$\fp$ would contain $x_{1},x_{2},x_{3}$, a contradiction, since $\fp$
has grade 2 by part $(b)$.
\end{proof}

\begin{Corollary}\label{Icapu=v}
Let $I$ be an HN ideal. Suppose that $x=x_{1},x_{2},x_{3}$ is a
regular sequence and that ${\rm grade}(v)=2$. Then $I\cap (u)=(v)$.
\end{Corollary}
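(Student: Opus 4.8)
The plan is to establish the two inclusions separately; $(v)\subseteq I\cap(u)$ is essentially free, and the real content is the reverse inclusion. For the easy direction, $(v)\subseteq I$ is trivial, and $(v)\subseteq(u)$ follows at once from the relation $\Phi\cdot[u]^{\top}=[v]^{\top}$ recorded in Section~\ref{introduction}, which exhibits each $v_{i}$ as an $A$-linear combination of $u_{1},u_{2}$.

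For the reverse inclusion $I\cap(u)\subseteq(v)$, I would proceed as follows. Since $(v)\subseteq(u)$ and $I=(v_{1},v_{2},D)=(v)+(D)$, Dedekind's modular law gives
\begin{eqnarray*}
I\cap(u)=\big((v)+(D)\big)\cap(u)=(v)+\big((D)\cap(u)\big)=(v)+\big((u):D\big)\,D,
\end{eqnarray*}
so it suffices to show $\big((u):D\big)\,D\subseteq(v)$. Two observations do this. First, multiplying the relation $\Phi\cdot[u]^{\top}=[v]^{\top}$ on the left by $\mathrm{adj}(\Phi)$ yields $D\cdot[u]^{\top}=\mathrm{adj}(\Phi)\cdot[v]^{\top}$ (because $\mathrm{adj}(\Phi)\cdot\Phi$ is $D$ times the identity matrix), so $Du_{1},Du_{2}\in(v)$, i.e. $(u)\,D\subseteq(v)$; note this is just the identity $x_{2}^{b_{2}}D=-x_{3}^{b_{3}}v_{1}-x_{1}^{b_{1}}v_{2}$ together with its companion for $u_{1}$. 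Second, I claim $(u):D=(u)$, i.e. $D$ is a nonzerodivisor on $A/(u)$; granting this, $\big((u):D\big)\,D=(u)\,D\subseteq(v)$ and we are done.

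It remains to prove the claim $(u):D=(u)$, which is where the regular-sequence hypothesis enters and which I expect to be the only genuine obstacle. Here $(u)=(x_{1}^{a_{1}},x_{2}^{b_{2}})$ and $D=x_{3}^{c_{3}}-x_{1}^{b_{1}}x_{2}^{a_{2}}$. Since powers of the elements of a regular sequence again form a regular sequence, $x_{1}^{a_{1}},x_{2}^{b_{2}},x_{3}^{c_{3}}$ is a regular sequence, so the image of $x_{3}^{c_{3}}$ is a nonzerodivisor on $\bar{A}:=A/(u)$. On the other hand $\bar{x}_{1}^{a_{1}}=0$ in $\bar{A}$, so $\bar{x}_{1}$ is nilpotent and hence $n:=\bar{x}_{1}^{b_{1}}\bar{x}_{2}^{a_{2}}$ is nilpotent in $\bar{A}$. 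Thus the image of $D$ in $\bar{A}$ is $\bar{x}_{3}^{c_{3}}-n$, a nonzerodivisor minus a nilpotent, and such an element is a nonzerodivisor: if $(w-n)\xi=0$ with $w$ a nonzerodivisor and $n^{k}=0$, then $w\xi=n\xi$ forces $w^{j}\xi=n^{j}\xi$ for all $j$, whence $w^{k}\xi=0$ and $\xi=0$. Therefore $D$ is a nonzerodivisor on $A/(u)$, $(u):D=(u)$, and the proof is complete. (Note that the hypothesis $\mathrm{grade}(v)=2$ is not actually needed for this corollary, though it is harmless to keep it; one could alternatively deduce $(u):D=(u)$ from Proposition~\ref{ifgradev2}$(b)$ by checking $D$ lies in no associated prime of $(u)$.)
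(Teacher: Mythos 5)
Your proof is correct, and its skeleton coincides with the paper's: both establish $(v)\subseteq I\cap(u)$, both use Cramer's rule to get $D\cdot(u)\subseteq(v)$, and both reduce the reverse inclusion to the single claim $(u):D=(u)$ (the paper writes $z=w+qD$ explicitly where you invoke the modular law, but this is the same computation). Where you genuinely diverge is in proving that claim. The paper shows $D$ is regular modulo $(u)$ by appealing to Proposition~\ref{ifgradev2}~$(b)$: since ${\rm grade}(v)=2$, Northcott's theorem makes $(u)$ grade-unmixed of grade $2$, so no associated prime of $(u)$ can contain the grade-$3$ sequence $x_{1},x_{2},x_{3}$, whereas any such prime containing $D$ would. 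You instead argue directly that $x_{1}^{a_{1}},x_{2}^{b_{2}},x_{3}^{c_{3}}$ is again a regular sequence, so $x_{3}^{c_{3}}$ is a nonzerodivisor on $A/(u)$, and $D$ differs from it by the nilpotent $x_{1}^{b_{1}}x_{2}^{a_{2}}$; your lemma that a nonzerodivisor minus a nilpotent is a nonzerodivisor is correct. This is more elementary (no recourse to Northcott or to primary decomposition) and, as you observe, it shows the hypothesis ${\rm grade}(v)=2$ is not actually needed for this corollary --- only the regular-sequence hypothesis on $x$ is used --- which is a genuine sharpening of the statement as given. Your parenthetical alternative (checking $D$ avoids the associated primes of $(u)$ via Proposition~\ref{ifgradev2}~$(b)$) is precisely the paper's argument.
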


\begin{proof}
Clearly $(v)\subseteq I\cap (u)$ and, by Cramer's rule, $D\cdot
(u)\subset (v)$. To see that $I\cap (u)\subseteq (v)$, it suffices to
show $(u):D=(u)$, for if $z\in I\cap (u)$, then $z=w+qD\in (u)$, with
$w\in (v)\subseteq (u)$. Thus $qD\in (u)$ and $q\in (u):D=(u)$ would
follow. Therefore one would deduce that $qD\in (v)$ and $z\in
(v)$. But $D$ is indeed regular modulo $(u)$. Were $D$ in an
associated prime $\fp$ of $(u)$, then $\fp$ would contain the regular
sequence $x_{1},x_{2},x_{3}$, a contradiction, since $\fp$ has grade 2
by Proposition~\ref{ifgradev2} $(b)$.
\end{proof}

\section{HN ideals are set theoretically complete 
intersections}\label{stci}

In this section, $A$ will be a commutative Noetherian ring and
$x=x_{1},x_{2},x_{3}$ a sequence of elements of $A$ generating a
proper ideal of height 3. We keep the notations of
Section~\ref{introduction}, i.e.,
$v_{1}=x_{1}^{c_{1}}-x_{2}^{b_{2}}x_{3}^{a_{3}}$,
$v_{2}=x_{2}^{c_{2}}-x_{1}^{a_{1}}x_{3}^{b_{3}}$ and
$D=x_{3}^{c_{3}}-x_{1}^{b_{1}}x_{2}^{a_{2}}$, and
$u_{1}=x_{1}^{a_{1}}$ and $u_{2}=-x_{2}^{b_{2}}$, with
$a,b\in\bn^{3}$.

In the next result we produce the desired element $g$ using an
algorithm employed by Bresinsky in \cite{bresinsky} (there in the case
where $A=k[x_{1},x_{2},x_{3}]$ is a polynomial ring over the field
$k$).  For this reason we will refer to this choice of $g$ as $g_{B}$.
This candidate for $g$ will subsequently be contrasted in
Example~\ref{gbandgv} and Remark~\ref{gbvsgv} below with the candidate
for $g$, denoted $g_{V}$, advanced by Valla in \cite{valla} (in our
general setting). To examine this contrast in detail, we pay close
attention to the form of $g$.

\begin{Theorem}\label{gb} (cf. \cite{bresinsky}, \cite{valla}.)
Let $I$ be an HN ideal. Then there exists an algorithmically specified
element $g$ in the radical of $I$ such that ${\rm rad}(I)={\rm
  rad}(v_{1},g)$. Moreover, if $x=x_{1},x_{2},x_{3}$ is a regular
sequence and ${\rm grade}(v)=2$, then this element $g$ lies in $I$.
\end{Theorem}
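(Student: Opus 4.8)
The plan is to reduce everything to a computation modulo $v_1$ and then to run, in this generality, the recursive construction of Bresinsky. Passing to $R:=A/(v_1)$ is harmless: since both ideals involved contain $(v_1)$, the asserted equality ${\rm rad}(I)={\rm rad}(v_1,g)$ holds in $A$ if and only if ${\rm rad}_R(\bar g)={\rm rad}_R(\bar v_2,\bar D)$ holds in $R$ (bars denoting images in $R$), and the latter equality already forces $\bar g\in{\rm rad}_R(\bar v_2,\bar D)$, i.e. $g\in{\rm rad}(I)$. So it suffices to produce, by an explicit algorithm, an element $g$ with ${\rm rad}_R(\bar g)={\rm rad}_R(\bar v_2,\bar D)$. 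The only inputs available are the two syzygies among the minors of $\cm$ afforded by its rows, which are polynomial identities and hence valid over any $A$,
\[
x_2^{a_2}v_1+x_3^{a_3}v_2+x_1^{a_1}D=0,\qquad x_3^{b_3}v_1+x_1^{b_1}v_2+x_2^{b_2}D=0,
\]
equivalently $x_3^{a_3}\bar v_2=-x_1^{a_1}\bar D$ and $x_1^{b_1}\bar v_2=-x_2^{b_2}\bar D$ in $R$, together with the binomial shapes $v_2=x_2^{c_2}-x_1^{a_1}x_3^{b_3}$ and $D=x_3^{c_3}-x_1^{b_1}x_2^{a_2}$.

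No single one of these relations is enough on its own: one checks, for instance, that $D\in{\rm rad}(v_1,x_3^{a_3})$ while $v_2\notin{\rm rad}(v_1,x_1^{a_1})$ in general, so the element $x_3^{a_3}v_2-x_1^{a_1}D$ by itself has too small a radical. Bresinsky's device is to iterate. One keeps a congruence of the shape $M_j\,g_j\equiv(\text{monomial})\cdot v_2^{\,p_j}\pmod{v_1}$, applies the appropriate syzygy to trade a power of $v_2$ against a power of $D$ (or conversely) while strictly decreasing a monotone quantity — the exponent $p_j$, or the degree of the trailing monomial — and updates $g_{j+1}$; the process terminates and one takes $g=g_B$ to be the resulting element. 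I would organise the recursion as an induction on, say, $\min(a_1,b_2)$ (or on the degree of an auxiliary monomial), the base case being when one of $v_2,D$ already divides, modulo $v_1$ and up to radical, a monomial multiple of the other. Run carefully, the construction simultaneously exhibits a monomial $M$ in $x_1,x_2,x_3$ with $Mg_B\in I$.

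It remains to verify ${\rm rad}_R(\bar g_B)={\rm rad}_R(\bar v_2,\bar D)$. The inclusion $\subseteq$ — and with it $g_B\in{\rm rad}(I)$ — is read off from the construction, since each step only multiplies by monomials and uses the two syzygies. For $\supseteq$ one argues prime by prime: for a prime $\fp\supseteq(v_1,g_B)$ one deduces $v_2,D\in\fp$ by the case analysis according to which of $x_1,x_2,x_3$ lie in $\fp$, exactly as in Remark~\ref{radv} and as in Bresinsky's original localised argument; this reduces to the two syzygy identities and uses no cancellation, so his polynomial-ring argument carries over to an arbitrary Noetherian $A$. Finally, for the \emph{moreover}: the construction yields only the monomial $M$ with $Mg_B\in I$, not $g_B\in I$ on the nose; but when $x=x_1,x_2,x_3$ is a regular sequence and ${\rm grade}(v)=2$, Proposition~\ref{ifgradev2}$(c)$ tells us each $x_i$, hence the monomial $M$, is a nonzerodivisor modulo $I$, so $Mg_B\in I$ forces $g_B\in I$. (Equivalently, one can run the whole recursion inside $A/I$, using Proposition~\ref{ifgradev2}$(c)$ to legitimise each division and Corollary~\ref{Icapu=v} to absorb the correction terms.)

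The hard part is the bookkeeping inside the recursive step: one must check that the chosen monotone quantity really drops at each stage, and that the monomial factors accumulated along the way can be controlled tightly enough both to keep $g_B$ (respectively $Mg_B$) in ${\rm rad}(I)$ (respectively $I$) throughout and to read off, at termination, a single monomial $M$ with $Mg_B\in I$. Granting this, the remaining ingredients — the reduction modulo $v_1$, the prime-by-prime verification of $\supseteq$, and the passage from $Mg_B\in I$ to $g_B\in I$ under the extra hypotheses — follow readily from the preliminary results already established.
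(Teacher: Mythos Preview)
Your outline does not actually construct $g$: the ``recursive step'' is never specified, no starting datum $g_0$ is given, and the suggested induction on $\min(a_1,b_2)$ cannot work because $a_1,b_2$ are fixed integers attached to $I$, not quantities one can decrease. More importantly, Bresinsky's construction is not a syzygy-trading scheme between $v_2$ and $D$ at all. The paper expands $v_2^{c_1}=(x_2^{c_2}-x_1^{a_1}x_3^{b_3})^{c_1}$ binomially and then, in $a_1$ explicit steps, uses the relation $x_1^{c_1}=x_2^{b_2}x_3^{a_3}+v_1$ to replace the high powers of $x_1$; the upshot is an identity
\[
v_2^{c_1}-p\,v_1=x_2^{a_1b_2}\,g_B,
\]
with $g_B$ written out term by term. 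The monomial absorbed is exactly $x_2^{a_1b_2}=(-u_2)^{a_1}$, not an unspecified $M$ in all three variables. This identity immediately gives $v_2\in{\rm rad}(v_1,g_B)$ and places $g_B$ in $(v):u_2^{a_1}$; then $g_B\in{\rm rad}(I)$ follows because, by the Eagon--Northcott height bound, $x_2$ avoids every minimal prime of $I$, and under the stronger hypotheses Proposition~\ref{ifgradev2}$(c)$ upgrades this to $g_B\in I$.

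The second gap is your treatment of $D\in{\rm rad}(v_1,g)$. A ``prime-by-prime case analysis according to which $x_i$ lie in $\fp$'' cannot proceed without a structural handle on $g$, and you have none. The paper's explicit formula shows
\[
g_B=(-1)^{c_1}x_3^{r}+h,\qquad r=a_1a_3+b_3c_1,\quad h\in(x_1,x_2),
\]
so that $D^{r}-(-1)^{c_1c_3}g_B^{\,c_3}\in(x_1,x_2)$; since this element also lies in ${\rm rad}(I)$, Remark~\ref{radv} places it in ${\rm rad}(v)\subseteq{\rm rad}(v_1,g_B)$, whence $D\in{\rm rad}(v_1,g_B)$. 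In short, the heart of the argument is the concrete binomial computation yielding the displayed form of $g_B$; once that is in hand, the radical equality and the ``moreover'' clause are short, but without it your outline does not get off the ground.
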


\begin{proof}
We prove that there exists an element $g$ in $A$ of the form
$g=(-1)^{c_{1}}x_{3}^{r}+h$, with $r\geq 1$ and $h\in (x_{1},x_{2})$,
and such that $v_{2}^{c_{1}}-pv_{1}=x_{2}^{a_{1}b_{2}}g$, for some
$p\in A$.

From this last equation, it follows that $v_{2}\in {\rm rad}(v_{1},g)$
and $g\in (v):u_{2}^{a_{1}}\subseteq I:u_{2}^{a_{1}}$. Since $I$ is
the ideal generated by the $2\times 2$ minors of a $2\times 3$ matrix,
by \cite[Theorem~3]{en} any minimal prime of $I$ is of height at most
2.  In particular, $x_{2}$ is not in any minimal prime of $I$. Thus
$u_{2}^{a_{1}}$ is regular modulo ${\rm rad}(I)$ and $g\in {\rm
  rad}(I)$. Moreover, if $x=x_{1},x_{2},x_{3}$ is a regular sequence
and ${\rm grade}(v)=2$, by Proposition~\ref{ifgradev2}, $x_{2}$ and
hence $u_{2}^{a_{1}}$ is regular modulo $I$ and $g\in I$ in this
case. Then $D^{r}-(-1)^{c_{1}c_{3}}g^{c_{3}}\in {\rm rad}(I)\cap
(x_{1},x_{2})\subseteq {\rm rad}(I\cap (u))$ which, by
Remark~\ref{radv}, is equal to ${\rm rad}(v)$ and so included in ${\rm
  rad}(v_{1},g)$. Thus $D\in {\rm rad}(v_{1},g)$ and ${\rm
  rad}(I)={\rm rad}(v_{1},g)$, and the result is then proved.

Now construct the desired $g$ (call it $g_{B}$) by following
Bresinsky's argument in \cite{bresinsky}. His idea is to take the
binomial expansion of
$v_{2}^{c_{1}}=(x_{2}^{c_{2}}-x_{1}^{a_{1}}x_{3}^{b_{3}})^{c_{1}}$
and, by subtracting a multiple of
$v_{1}=x_{1}^{c_{1}}-x_{2}^{b_{2}}x_{3}^{a_{3}}$, eliminate the higher
order terms in $x_{1}^{a_{1}}x_{3}^{b_{3}}$. To assist the reader we
include details that were omitted in Bresinsky's paper.

For $i=0$, write $B_{0}=v_{2}^{c_{1}}$, $p_{0}=0$ and $q_{0}=0$. For
$i=1,\ldots ,a_{1}$, write

\begin{eqnarray*}
&&B_{i}=\sum_{j=i}^{c_{1}}(-1)^{c_{1}-j}\binom{c_{1}}{j}
  x_{2}^{jc_{2}}x_{1}^{a_{1}(c_{1}-j)}x_{3}^{b_{3}(c_{1}-j)},
  \\&&p_{i}=\sum_{j=0}^{i-1}(-1)^{c_{1}-j}\binom{c_{1}}{j}
  x_{2}^{ja_{2}+(i-1)b_{2}}x_{1}^{(a_{1}-i+j)c_{1}-ja_{1}}
  x_{3}^{(i-1-j)a_{3}+b_{3}(c_{1}-j)},
\end{eqnarray*}
\begin{eqnarray*}
&&q_{i}=p_{i}x_{2}^{b_{2}}x_{3}^{a_{3}}=
  \sum_{j=0}^{i-1}(-1)^{c_{1}-j}\binom{c_{1}}{j}
  x_{2}^{ja_{2}+ib_{2}}x_{1}^{(a_{1}-i+j)c_{1}-ja_{1}}
  x_{3}^{(i-j)a_{3}+b_{3}(c_{1}-j)}.
\end{eqnarray*}

Note that $B_{i}$ is just a part of the binomial expansion of
$v_{2}^{c_{1}}=(x_{2}^{c_{2}}-x_{1}^{a_{1}}x_{3}^{b_{3}})^{c_{1}}$. Moreover,

\begin{eqnarray*}
&&B_{i}-B_{i+1}+q_{i}-p_{i+1}x_{1}^{c_{1}}=
  B_{i}-B_{i+1}+p_{i}x_{2}^{b_{2}}x_{3}^{a_{3}}-p_{i+1}x_{1}^{c_{1}}
  \\&&=(-1)^{c_{1}-i}\binom{c_{1}}{i}
  x_{2}^{ic_{2}}x_{1}^{a_{1}(c_{1}-i)}x_{3}^{b_{3}(c_{1}-i)}
  \\&&+\left[ \sum_{j=0}^{i-1}(-1)^{c_{1}-j}\binom{c_{1}}{j}
    x_{2}^{ja_{2}+(i-1)b_{2}}x_{1}^{(a_{1}-i+j)c_{1}-ja_{1}}
    x_{3}^{(i-1-j)a_{3}+b_{3}(c_{1}-j)}\right]x_{2}^{b_{2}}x_{3}^{a_{3}}
  \\&&-\left[\sum_{j=0}^{i}(-1)^{c_{1}-j}\binom{c_{1}}{j}
    x_{2}^{ja_{2}+ib_{2}}x_{1}^{(a_{1}-i-1+j)c_{1}-ja_{1}}
    x_{3}^{(i-j)a_{3}+b_{3}(c_{1}-j)}\right]x_{1}^{c_{1}}=0.
\end{eqnarray*}

Now, for $i=0,\ldots ,a_{1}$, let us see that
$v_{2}^{c_{1}}-\sum_{j=0}^{i}p_{j}v_{1}=B_{i}+q_{i}$. Indeed, for
$i=0$, since $B_{0}=v_{2}^{c_{1}}$ and $p_{0}=0$ and $q_{0}=0$,
$v_{2}^{c_{1}}-p_{0}v_{1}=B_{0}+q_{0}$. Suppose the equality holds for
$i$, $0<i<a_{1}$. Then
\begin{eqnarray*}
&&v_{2}^{c_{1}}-\sum_{j=0}^{i+1}p_{j}v_{1}=B_{i}+q_{i}-p_{i+1}v_{1}=
  \\&&B_{i+1}+(B_{i}-B_{i+1}+q_{i}-p_{i+1}x_{1}^{c_{1}})+
  p_{i+1}x_{2}^{b_{2}}x_{3}^{a_{3}}=B_{i+1}+q_{i+1}.
\end{eqnarray*}
In particular, for $i=a_{1}$, 
\begin{eqnarray*}
&&v_{2}^{c_{1}}-\sum_{j=0}^{a_{1}}p_{j}v_{1}=B_{a_{1}}+q_{a_{1}}=
  \\&&\sum_{j=a_{1}}^{c_{1}}(-1)^{c_{1}-j}\binom{c_{1}}{j}
  x_{2}^{jc_{2}}x_{1}^{a_{1}(c_{1}-j)}x_{3}^{b_{3}(c_{1}-j)}+
  \\&&\sum_{j=0}^{a_{1}-1}(-1)^{c_{1}-j}\binom{c_{1}}{j}
  x_{2}^{ja_{2}+a_{1}b_{2}}x_{1}^{j(c_{1}-a_{1})}
  x_{3}^{(a_{1}-j)a_{3}+b_{3}(c_{1}-j)}= \\
&&x_{2}^{a_{1}b_{2}}\left[
    \sum_{j=a_{1}}^{c_{1}}(-1)^{c_{1}-j}\binom{c_{1}}{j}
    x_{2}^{ja_{2}+(j-a_{1})b_{2}}x_{1}^{a_{1}(c_{1}-j)}x_{3}^{b_{3}(c_{1}-j)}\right]
  + \\&&x_{2}^{a_{1}b_{2}}\left[
    \sum_{j=0}^{a_{1}-1}(-1)^{c_{1}-j}\binom{c_{1}}{j}
    x_{2}^{ja_{2}}x_{1}^{j(c_{1}-a_{1})}
    x_{3}^{(a_{1}-j)a_{3}+b_{3}(c_{1}-j)}\right].
\end{eqnarray*}

Thus, taking $p=\sum_{j=0}^{a_{1}}p_{j}v_{1}$ and $g_{B}$ the element
given as follows:

\begin{eqnarray*}
g_{B}=&&\sum_{j=a_{1}}^{c_{1}}(-1)^{c_{1}-j}\binom{c_{1}}{j}
x_{2}^{ja_{2}+(j-a_{1})b_{2}}x_{1}^{a_{1}(c_{1}-j)}x_{3}^{b_{3}(c_{1}-j)}
\\ +&&\sum_{j=0}^{a_{1}-1}(-1)^{c_{1}-j}\binom{c_{1}}{j}
x_{2}^{ja_{2}}x_{1}^{j(c_{1}-a_{1})}x_{3}^{(a_{1}-j)a_{3}+b_{3}(c_{1}-j)},
\end{eqnarray*}
one has $v_{2}^{c_{1}}-pv_{1}=x_{2}^{a_{1}b_{2}}g_{B}$. Moreover, note
that the term $j=0$ of $g_{B}$ gives $(-1)^{c_{1}}x_{3}^{r}$, with
$r=a_{1}a_{3}+b_{3}c_{1}$, and the rest of the terms of $g_{B}$ are in
$(x_{1},x_{2})$.
\end{proof}

\begin{Example}\label{gbandgv}{\rm 
In \cite[Theorem~3]{valla}, Valla constructed an element $g_{V}$ in
the radical of $I$ such that ${\rm rad}(I)={\rm rad}(v_{1},g_{V})$. In
concrete terms, changing Valla's notation to ours,
\begin{eqnarray*}
g_{V}=\sum_{j=0}^{c_{1}}(-1)^{c_{1}-j}\binom{c_{1}}{j}x_{1}^{s}
x_{2}^{(c_{1}-j)c_{2}+tb_{2}-a_{1}b_{2}}x_{3}^{jb_{3}+ta_{3}},
\end{eqnarray*}
where for $0\leq j\leq c_{1}$, $ja_{1}=tc_{1}+s$ with $0\leq s\leq
c_{1}-1$.

For instance, if we take $\cm$ to be as follows:
\begin{eqnarray*}
\cm=\left(\begin{array}{ccc}x_{1}^{2}&x_{2}^{}&x_{3}^{}
  \\ x_{2}^{2}&x_{3}^{}&x_{1}^{2}\end{array}\right),
\end{eqnarray*}
which is the example considered by Bresinsky in \cite{bresinsky}, then
the element $g_{B}$ considered in the proof of Theorem~\ref{gb} is as
follows:
\begin{eqnarray*}
g_{B}=\sum_{j=2}^{4}(-1)^{4-j}\binom{4}{j}
x_{2}^{j+(j-2)2}x_{1}^{2(4-j)}x_{3}^{4-j}
+\sum_{j=0}^{1}(-1)^{4-j}\binom{4}{j}
x_{2}^{j}x_{1}^{j2}x_{3}^{(2-j)+(4-j)},
\end{eqnarray*}
so that,
\begin{eqnarray*}
g_{B}=x_{2}^{8}-4x_{2}^{5}x_{1}^{2}x_{3}+6x_{2}^{2}x_{1}^{4}x_{3}^{2}-
4x_{2}x_{1}^{2}x_{3}^{4}+x_{3}^{6}.
\end{eqnarray*}
On the other hand,
\begin{eqnarray*}
g_{V}=x_{2}^{8}-4x_{2}^{5}x_{1}^{2}x_{3}+6x_{2}^{4}x_{3}^{3}-
4x_{2}x_{1}^{2}x_{3}^{4}+x_{3}^{6}.
\end{eqnarray*}
Remark that $g_{B}-g_{V}=6x_{2}^{2}x_{3}^{2}v_{1}$. This pattern is
completely general, as we now show.  }\end{Example}

\begin{Remark}\label{gbvsgv}{\rm 
If $v_{1},x_{2}$ is a regular sequence, then $g_{B}-(-1)^{c_{1}}g_{V}$
is in $(v_{1})$. In particular, if moreover $x=x_{1},x_{2},x_{3}$ is a
regular sequence and ${\rm grade}(v)=2$, then $g_{B}$ and $g_{V}$ are
in $I$.  }\end{Remark}

\begin{proof}
By the beginning of the proof of Valla \cite[Theorem~3]{valla} -
changing his notation to ours - we have:
\begin{eqnarray*}
(-1)^{c_{1}}v_{2}^{c_{1}}=x_{2}^{a_{1}b_{2}}g_{V}\mbox{ mod }(v_{1}).
\end{eqnarray*}
On the other hand, from Theorem~\ref{gb}, we have
$v_{2}^{c_{1}}=x_{2}^{a_{1}b_{2}}g_{B}+pv_{1}$. Hence
\begin{eqnarray}\label{gb-gv}
x_{2}^{a_{1}b_{2}}[g_{B}-(-1)^{c_{1}}g_{V}]=0\mbox{ mod }(v_{1}).
\end{eqnarray}
If $x_{2}$ is regular modulo $(v_{1})$, then
$g_{B}-(-1)^{c_{1}}g_{V}\in (v_{1})$. If moreover
$x=x_{1},x_{2},x_{3}$ is a regular sequence and ${\rm grade}(v)=2$, by
Theorem~\ref{gb}, $g_{B}$ can be taken in $I$, and so can $g_{V}$.
\end{proof}

\begin{Remark}{\rm Note that it is always the case that $x_{2}$ is 
regular modulo ${\rm rad}(I)$ for a general HN ideal $I$ (see the
proof of Theorem~\ref{gb}). Hence it follows from Theorem~\ref{gb} and
equation~(\ref{gb-gv}) above that ${\rm rad}(I)={\rm
  rad}(v_{1},g_{V})$ for a general HN ideal $I$. }\end{Remark}

\section{On the condition ${\rm grade}(v)=2$}\label{secgradev=2}

In this section, $A$ will be a commutative Noetherian ring and
$x=x_{1},x_{2},x_{3}$ a sequence of elements of $A$ (not necessarily
generating a proper ideal of height 3). We keep the notations of
Section~\ref{introduction}, i.e.,
$v_{1}=x_{1}^{c_{1}}-x_{2}^{b_{2}}x_{3}^{a_{3}}$,
$v_{2}=x_{2}^{c_{2}}-x_{1}^{a_{1}}x_{3}^{b_{3}}$ and
$D=x_{3}^{c_{3}}-x_{1}^{b_{1}}x_{2}^{a_{2}}$. The purpose of this
section is to study the condition ${\rm grade}(v) = 2$ versus the
condition that $x_{1},x_{2}$ or $x_{1},x_{2},x_{3}$ forms a regular
sequence. These results are of interest in view of Remark~\ref{gbvsgv}
and of results in subsequent sections.

In the latter half of the paper, we shall be particularly interested
in the case of a polynomial ring. As will be seen in
Example~\ref{as*}, this case can be placed in a graded context. For
this reason and for ease of reference, we introduce the following
notation. With the assumptions of this section, we say that $(A,x)$
satisfies the {\em homogeneous condition} $(*)$ if $A$ can be graded
by $\bn_{0}$, with $x_{1},x_{2},x_{3}$ and $v_{1},v_{2},D$ homogeneous
elements of positive degree.

The following result is folklore; see \cite[Corollary~1.6.19]{bh} for
the unexplained notations and a proof in the local case. For the
graded case use \cite[\S~9.7, Corollaire~2]{bourbaki} and
\cite[Theorem~1.6.17 $(b)$]{bh}.

\begin{Theorem}\label{folklore}
Let $R$ be a commutative Noetherian ring. Let $y=y_{1},\ldots ,y_{n}$
be a sequence of elements of $R$. Let $M$ be a finitely generated
$R$-module. Suppose either that $y_{1},\ldots ,y_{n}$ are in the
Jacobson radical of $R$, or that $R=\oplus_{n\geq 0}R_{n}$ is
$\bn_{0}$-graded, $y_{1},\ldots ,y_{n}$ are homogeneous elements of
positive degree and $M=\oplus_{n\geq 0}M_{n}$ is graded over $R$.
Then the following conditions are equivalent:
\begin{itemize}
\item[$(1)$] ${\rm grade}((y_{1},\ldots ,y_{n});M)=n$;
\item[$(2)$] $H_{i}(y;M)=0$ for all $i\geq 1$;
\item[$(3)$] $H_{1}(y;M)=0$;
\item[$(4)$] $y=y_{1},\ldots ,y_{n}$ is a regular sequence (in any
  order).
\end{itemize}
\end{Theorem}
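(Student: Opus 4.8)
The plan is to establish the cycle $(1)\Rightarrow(2)\Rightarrow(3)\Rightarrow(4)\Rightarrow(1)$, running the argument in parallel in the two settings of the hypothesis (all $y_i$ in the Jacobson radical; or $R$ $\bn_{0}$-graded with the $y_i$ homogeneous of positive degree and $M$ graded). I would first reduce to $M\neq 0$. The one genuinely non-formal input, used throughout, is that under either hypothesis a finitely generated $R$-module $N$ with $y_iN=N$ for some $i$ must vanish: this is Nakayama's lemma in the radical case and the graded Nakayama lemma in the graded case, for which \cite[\S~9.7, Corollaire~2]{bourbaki} is the reference. In particular $(y)M\neq M$, so all the Koszul-homology statements below are of the standard ``$M\neq 0$, $(y)M\neq M$'' type.

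With that in hand, $(1)\Leftrightarrow(2)$ is the depth-sensitivity of the Koszul complex: for such $M$ one has ${\rm grade}((y);M)=n-\sup\{\,i:H_i(y;M)\neq 0\,\}$, which is \cite[Theorem~1.6.17]{bh} in the local case and \cite[Theorem~1.6.17 $(b)$]{bh} in the graded case; hence ${\rm grade}=n$ exactly when $H_i(y;M)=0$ for all $i\geq1$. Since $H_0(y;M)=M/(y)M\neq0$, the same formula gives ${\rm grade}((y);M)\leq n$ always, and combining this with the fact that a regular sequence of length $n$ inside $(y)$ forces ${\rm grade}((y);M)\geq n$ yields $(4)\Rightarrow(1)$. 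The implication $(2)\Rightarrow(3)$ is trivial.

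The heart of the matter is $(3)\Rightarrow(4)$, which I would prove by induction on $n$. For $n=1$, $H_1(y_1;M)=(0:_My_1)$, so $H_1(y_1;M)=0$ says exactly that $y_1$ is a nonzerodivisor on $M$, and $y_1M\neq M$ has already been noted. For the inductive step put $y'=y_1,\dots,y_{n-1}$ and use the long exact Koszul sequence obtained by viewing $K_\bullet(y;M)$ as the mapping cone of multiplication by $y_n$ on $K_\bullet(y';M)$, whose connecting maps are $\pm y_n$; in low degrees it reads
\begin{eqnarray*}
H_1(y';M)\xrightarrow{\ \pm y_n\ }H_1(y';M)\to H_1(y;M)\to H_0(y';M)\xrightarrow{\ \pm y_n\ }H_0(y';M).
\end{eqnarray*}
From $H_1(y;M)=0$ I read off that $y_n$ acts surjectively on the finitely generated (and, in the graded case, bounded-below) module $H_1(y';M)$, so $H_1(y';M)=0$ by Nakayama, and the inductive hypothesis makes $y'$ an $M$-sequence; and that $y_n$ acts injectively on $H_0(y';M)=M/(y')M$, i.e.\ $y_n$ is a nonzerodivisor modulo $(y')$, while $M/(y)M\neq0$. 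Hence $y_1,\dots,y_n$ is an $M$-sequence, closing the cycle.

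Finally, $H_\bullet(y;M)$ is unchanged up to isomorphism under permuting the $y_i$, so condition $(3)$ — and therefore, by the equivalence just proved, condition $(4)$ — does not depend on the order; this is the ``in any order'' clause. The main obstacle is not any individual implication but making the two Nakayama-type reductions legitimate uniformly across the local and graded frameworks, which is precisely why the statement is supported by \cite[\S~9.7, Corollaire~2]{bourbaki} and \cite[Theorem~1.6.17 $(b)$]{bh} in addition to \cite[Corollary~1.6.19]{bh}.
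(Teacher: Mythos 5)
Your proof is correct. The paper itself offers no argument for this statement --- it labels the result folklore and simply cites \cite[Corollary~1.6.19]{bh} for the local case and \cite[\S~9.7, Corollaire~2]{bourbaki} together with \cite[Theorem~1.6.17~$(b)$]{bh} for the graded case --- and your argument (depth sensitivity of the Koszul complex for $(1)\Leftrightarrow(2)$ and $(4)\Rightarrow(1)$, plus the Nakayama induction on the long exact Koszul homology sequence for $(3)\Rightarrow(4)$, with permutation-invariance of $H_{\bullet}(y;M)$ giving the ``in any order'' clause) is precisely the standard proof contained in those references, carried out uniformly in the two settings.
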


We state now the desired result.

\begin{Proposition}\label{grade2}
Let $x=x_{1},x_{2},x_{3}$ be a sequence of elements of $A$. Suppose
either that $x_{1},x_{2},x_{3}$ are in the Jacobson radical of $A$, or
that $(A,x)$ satisfies the homogeneous condition $(*)$. Then
\begin{itemize}
\item[$(a)$] $x_{1},x_{2}$ is a regular sequence if and only if
  $v_{1},x_{2}$ is a regular sequence;
\item[$(b)$] $x_{1},x_{2},x_{3}$ is a regular sequence if and only if
  $v_{1},v_{2},x_{3}$ is a regular sequence (so ${\rm grade}(v)=2$ in
  either case);
\item[$(c)$] If moreover $x_{1},x_{2},x_{3}$ generate a proper ideal
  of height 3, ${\rm grade}(v)=2$ and $A$ satisfies the Serre
  condition $(S_{3})$, then $x_{1},x_{2},x_{3}$ is a regular sequence.
\end{itemize}
\end{Proposition}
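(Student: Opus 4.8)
My plan is to reduce all three parts to Theorem~\ref{folklore}, which under the standing hypotheses identifies the condition ``grade equals the number of elements'' with ``regular sequence''. The key observation is that the ideals occurring in (a) and (b) literally coincide: since $b_{2}\geq 1$ one has $x_{2}^{b_{2}}x_{3}^{a_{3}}\in(x_{2})$, whence
\[
(v_{1},x_{2})=(x_{1}^{c_{1}}-x_{2}^{b_{2}}x_{3}^{a_{3}},x_{2})=(x_{1}^{c_{1}},x_{2}),
\]
and since $a_{3},b_{3}\geq 1$ one has $x_{2}^{b_{2}}x_{3}^{a_{3}},x_{1}^{a_{1}}x_{3}^{b_{3}}\in(x_{3})$, whence
\[
(v_{1},v_{2},x_{3})=(x_{1}^{c_{1}},x_{2}^{c_{2}},x_{3}).
\]
Passing to radicals, $\sqrt{(v_{1},x_{2})}=\sqrt{(x_{1},x_{2})}$ and $\sqrt{(v_{1},v_{2},x_{3})}=\sqrt{(x_{1},x_{2},x_{3})}$; as the grade of an ideal depends only on its radical, ${\rm grade}((v_{1},x_{2});A)={\rm grade}((x_{1},x_{2});A)$ and ${\rm grade}((v_{1},v_{2},x_{3});A)={\rm grade}((x_{1},x_{2},x_{3});A)$. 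Moreover $v_{1},v_{2}\in(x_{1},x_{2},x_{3})$, so the sequences $v_{1},x_{2}$ and $v_{1},v_{2},x_{3}$ lie in the Jacobson radical (resp. are homogeneous of positive degree, under $(*)$) exactly when $x_{1},x_{2},x_{3}$ do, and Theorem~\ref{folklore} applies to each. Thus $x_{1},x_{2}$ is a regular sequence iff ${\rm grade}((x_{1},x_{2});A)=2$ iff ${\rm grade}((v_{1},x_{2});A)=2$ iff $v_{1},x_{2}$ is a regular sequence, which is (a); the same chain with $x_{1},x_{2},x_{3}$, $v_{1},v_{2},x_{3}$ in place gives (b). In either case $v_{1},v_{2}$ is then an initial segment of the regular sequence $v_{1},v_{2},x_{3}$, hence regular, so ${\rm grade}(v)=2$.

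For (c), Theorem~\ref{folklore} turns the hypothesis ${\rm grade}(v)=2$ into the assertion that $v_{1},v_{2}$ is an $A$-regular sequence; by (b) it therefore suffices to prove that $x_{3}$ is a nonzerodivisor modulo $(v_{1},v_{2})$, i.e. that $v_{1},v_{2},x_{3}$ is a regular sequence. Suppose not, and pick $\fp\in{\rm Ass}(A/(v_{1},v_{2}))$ with $x_{3}\in\fp$. Localising at $\fp$ keeps $v_{1},v_{2}$ regular, so ${\rm depth}(A_{\fp})={\rm depth}((A/(v_{1},v_{2}))_{\fp})+2=0+2=2$; condition $(S_{3})$ then forces $\dim A_{\fp}\leq 2$, i.e. ${\rm height}(\fp)\leq 2$. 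On the other hand $\fp\supseteq(v_{1},v_{2})$ gives ${\rm height}(\fp)\geq {\rm grade}(v)=2$, hence ${\rm height}(\fp)=2$. But $v_{1},v_{2}\in\fp$ together with $x_{3}\in\fp$ force $x_{1}^{c_{1}}=v_{1}+x_{2}^{b_{2}}x_{3}^{a_{3}}\in\fp$ and $x_{2}^{c_{2}}=v_{2}+x_{1}^{a_{1}}x_{3}^{b_{3}}\in\fp$, so $x_{1},x_{2},x_{3}\in\fp$ and ${\rm height}(\fp)\geq {\rm height}(x_{1},x_{2},x_{3})=3$, a contradiction. Therefore $v_{1},v_{2},x_{3}$ is a regular sequence, and (b) yields that $x_{1},x_{2},x_{3}$ is one.

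The verifications in (a) and (b) are bookkeeping; the delicate step is (c), where one must combine correctly the depth computation supplied by the regular sequence $v_{1},v_{2}$, Serre's condition $(S_{3})$, and the inequality ${\rm grade}\leq{\rm height}$ to bound ${\rm height}(\fp)$ from above — after which the explicit shape of $v_{1},v_{2}$ immediately forces $x_{1},x_{2},x_{3}\in\fp$ and produces the contradiction with the height-$3$ hypothesis. (One could also sidestep associated primes entirely: over an $(S_{3})$ ring every ideal of height $3$ has grade $\geq\min\{3,3\}=3$, so ${\rm grade}((x_{1},x_{2},x_{3});A)=3$ and Theorem~\ref{folklore} applies directly; the route above is the one that uses the hypothesis ${\rm grade}(v)=2$ naturally.)
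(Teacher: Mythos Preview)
Your proof is correct and takes essentially the same route as the paper: identify the ideals in (a) and (b), equate grades (you via radicals, the paper via \cite[Exercise~3.1.12(c)]{kaplansky}), and apply Theorem~\ref{folklore}; for (c) run the $(S_{3})$ argument on associated primes to force height~$2$ and derive a contradiction with ${\rm height}(x_{1},x_{2},x_{3})=3$. The only cosmetic difference is that in (c) the paper applies this argument to $(x_{1},x_{2})$ (first using $(v)\subseteq(x_{1},x_{2})$ to get ${\rm grade}(x_{1},x_{2})=2$, then showing $x_{3}$ avoids every associated prime of $(x_{1},x_{2})$) and concludes directly, whereas you apply it to $(v_{1},v_{2})$ and then invoke (b); your parenthetical shortcut for (c) is also valid.
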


\begin{proof}
We have $(v_{1},x_{2})=(x_{1}^{c_{1}},x_{2})$ and
$(v_{1},v_{2},x_{3})=(x_{1}^{c_{1}},x_{2}^{c_{2}},x_{3})$. Thus ${\rm
  grade}(v_{1},x_{2})={\rm grade}(x_{1}^{c_{1}},x_{2})$ and ${\rm
  grade}(v_{1},v_{2},x_{3})={\rm
  grade}(x_{1}^{c_{1}},x_{2}^{c_{2}},x_{3})$. By
Theorem~\ref{folklore}, $v_{1},x_{2}$ is a regular sequence if and
only if ${\rm grade}(v_{1},x_{2})=2$, and so, by
Theorem~\ref{folklore} again, if and only if $x_{1}^{c_{1}},x_{2}$ is
a regular sequence. Analogously, $v_{1},v_{2},x_{3}$ is a regular
sequence if and only if $x_{1}^{c_{1}},x_{2}^{c_{2}},x_{3}$ is a
regular sequence. Using \cite[Exercise~3.1.12 $(c)$]{kaplansky}, one
deduces $(a)$ and $(b)$.

Finally, if ${\rm grade}(v)=2$, since $(v)\subseteq (x_{1},x_{2})$,
${\rm grade}(x_{1},x_{2})=2$, by \cite[Theorem~125]{kaplansky}. By
Theorem~\ref{folklore}, $x_{1},x_{2}$ is a regular sequence, and by
\cite[Theorem~130]{kaplansky}, the ideal $(x_{1},x_{2})$ is
grade-unmixed. Thus, for any associated prime $\fp$ of
$(x_{1},x_{2})$, ${\rm depth}(A_{\mathfrak{p}})={\rm grade}(\fp)=2$
and ${\rm height}(\fp)\geq 2$. Since $A$ satisfies the Serre condition
$(S_{3})$, $2=\mbox{depth}(A_{\mathfrak{p}})\geq \inf (3,
\mbox{height}(\fp))$. Hence $\fp$ has height $2$, so $x_{3}\notin \fp$
since $(x_{1},x_{2},x_{3})$ has height $3$, by assumption. Thus
$x_{1},x_{2},x_{3}$ is a regular sequence.
\end{proof}

\begin{Example}\label{as*} {\rm Let $A=k[x_{1},x_{2},x_{3}]$
be the polynomial ring with $x_{1},x_{2},x_{3}$ three variables over a
field $k$. Set $m_{1}=c_{2}c_{3}-a_{2}b_{3}$,
$m_{2}=c_{1}c_{3}-a_{3}b_{1}$ and $m_{3}=c_{1}c_{2}-a_{1}b_{2}$. Endow
$A$ with the natural grading induced by giving $x_{i}$ weight
$m_{i}$. Then $A$ is graded by $\bn_{0}$ and $x_{1},x_{2},x_{3}$ and
$v_{1},v_{2},D$ are homogeneous elements of positive degree, i.e.,
$(A,x)$ satisfies the homogeneous condition $(*)$. In particular,
$v_{1},v_{2}$ is a regular sequence in either order.  }\end{Example}

\begin{proof}
Clearly $m_{i}>0$, so $x_{i}$ is a homogeneous element of positive
degree. On the other hand, $v_{1},v_{2},D$ are homogeneous provided
that $(m_{1},m_{2},m_{3})$ satisfies the following system of
equations:
\begin{eqnarray}\label{m-system1}
\left\{ \begin{array}{ccc}c_{1}m_{1}&=&b_{2}m_{2}+a_{3}m_{3},\\
c_{2}m_{2}&=&a_{1}m_{1}+b_{3}m_{3},\\
c_{3}m_{3}&=&b_{1}m_{1}+a_{2}m_{2}.\end{array} \right.
\end{eqnarray}
It is easily checked that this is indeed the case.  Hence
$v_{1},v_{2},D$ are homogeneous elements of positive degree and so
$(A,x)$ satisfies the homogeneous condition $(*)$. Applying
Proposition~\ref{grade2}, we deduce that $v_{1},v_{2}$ is a regular
sequence, and in either order because of Theorem~\ref{folklore}.
\end{proof}

\begin{Remark}\label{unique-sol}{\rm 
In fact, note that in the system of equations~(\ref{m-system1}), each
one of the three equations can be obtained from the other two via
addition. Moreover, since $c_{1}>a_{1}$ and $c_{2}>b_{2}$, the system
can be reduced to the $\bq$-linear system of rank 2 formed by the two
first equations:
\begin{eqnarray*}
\left( \begin{array}{rrr}c_{1}&-b_{2}&-a_{3}\\-a_{1}&c_{2}&-b_{3}\end{array}
\right)\left( \begin{array}{c}m_{1}\\m_{2}\\m_{3}\end{array}\right)=
\left( \begin{array}{c}0\\0\end{array}\right).
\end{eqnarray*}
By Cramer's rule, the $\bq$-linear subspace of solutions is generated
by the non-zero vector
\begin{eqnarray*}
&&\left(
\left| \begin{array}{rr}a_{3}&-b_{2}\\b_{3}&c_{2}
\end{array}\right|,
\left| \begin{array}{rr}c_{1}&a_{3}\\-a_{1}&b_{3}\end{array}\right|,
\left| \begin{array}{rr}c_{1}&-b_{2}\\-a_{1}&c_{2}
\end{array}\right|\right)=\\
&&(a_{2}a_{3}+a_{3}b_{2}+b_{2}b_{3},a_{1}a_{3}+a_{1}b_{3}+b_{1}b_{3},
a_{1}a_{2}+a_{2}b_{1}+b_{1}b_{2})=
\\ &&(c_{2}c_{3}-a_{2}b_{3},c_{1}c_{3}-a_{3}b_{1},c_{1}c_{2}-a_{1}b_{2})
=(m_{1},m_{2},m_{3}).
\end{eqnarray*}
Thus $(m_{1},m_{2},m_{3})\in\bn^{3}$ is a solution of
system~(\ref{m-system1}) which is unique up to a non-zero rational
multiple.  
}\end{Remark}

One can have a Cohen-Macaulay domain $A$ and a sequence of elements
$x=x_{1},x_{2},x_{3}$ of $A$ generating a proper ideal of height, and
hence grade, 3, though with ${\rm grade}(v_{1},v_{2})$ equal to 1.
Necessarily, by Proposition~\ref{grade2}, $x=x_{1},x_{2},x_{3}$ are
not in the Jacobson radical and $(A,x)$ does not satisfy the
homogeneous condition $(*)$.

\begin{Example}{\rm (see \cite[Exercise~7, p.~102]{kaplansky})
Let $A=k[y_{1},y_{2},y_{3}]$ be the polynomial ring with
$y_{1},y_{2},y_{3}$ three variables over a field $k$. Then the
elements $x_{3}=y_{1}$, $x_{1}=y_{2}(1-y_{1})$ and
$x_{2}=y_{3}(1-y_{1})$ form a regular sequence in this order, but in
the order $x_{1},x_{2},x_{3}$ they do not; moreover ${\rm
  grade}(x_{1},x_{2})={\rm grade}((x_{1},x_{2});A/(x_{1}))+1=1$. Thus
$x_{1},x_{2},x_{3}$ generate a proper ideal of height (and grade) $3$,
but ${\rm grade}(v_{1},v_{2})=1$.  }\end{Example}

\section{HN ideals are geometrically linked to complete 
intersections}\label{licci}

In this section, $A$ will be a commutative Noetherian ring and
$x=x_{1},x_{2},x_{3}$ a regular sequence.  Moreover, we will suppose
that the ideal $(v)$ has grade 2 (see
e.g. Proposition~\ref{grade2}). In spite of $(v)$ having grade 2, it
may happen that $v_{1},v_{2}$ is not a regular sequence. However one
can ensure that there does exist an element $w\in A$ such that
$v_{1}+wv_{2},v_{2}$ is a regular sequence (see
e.g. \cite[Theorem~125]{kaplansky} and its proof). We keep the rest of
the notations as in Section~\ref{introduction}.

\begin{Lemma}\label{ivu1}
Let $I$ be an HN ideal. Suppose that $x=x_{1},x_{2},x_{3}$ is a
regular sequence and that ${\rm grade}(v)=2$. Then
$I=(v):u_{1}=(v):u_{2}=(v):(u)$.
\end{Lemma}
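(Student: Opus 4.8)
The plan is to prove the chain of equalities
$I=(v):u_{1}=(v):u_{2}=(v):(u)$ by establishing the two inclusions
$(v):u_{i}\subseteq I$ and $I\subseteq(v):u_{i}$ for $i=1,2$, and then
noting that $(v):(u)=((v):u_{1})\cap((v):u_{2})$ forces the middle term
to coincide as well. The easy inclusion is $I\subseteq(v):u_{i}$: by
Cramer's rule applied to the matrix $\Phi$ with $\Phi\cdot[u]^{\top}=[v]^{\top}$
and $\det\Phi=D$, one has $D\cdot u_{1},\,D\cdot u_{2}\in(v)$, so
$D\in(v):u_{1}$ and $D\in(v):u_{2}$; since $v_{1},v_{2}\in(v)$ trivially
lie in every colon ideal $(v):z$, we get $I=(v_{1},v_{2},D)\subseteq(v):u_{i}$
for each $i$. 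This part requires no hypotheses beyond what is already in place.

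For the reverse inclusion $(v):u_{i}\subseteq I$, the key fact is
Proposition~\ref{ifgradev2}$(a)$, which (under ${\rm grade}(v)=2$) already
gives $(v):D=(u)$ and $(v):I=(u)$; combined with the hypothesis that
$x=x_{1},x_{2},x_{3}$ is a regular sequence, Corollary~\ref{Icapu=v} gives
$I\cap(u)=(v)$. So take $z\in(v):u_{1}$, i.e. $zu_{1}=zx_{1}^{a_{1}}\in(v)$.
I would use this together with the Northcott structure to show $z\in I$.
The natural route: since $(v)\subseteq I$ and $I\cap(u)=(v)$, it suffices
to show $zu_{1}\in(v)$ implies $z\in I$, which amounts to proving that
$u_{1}=x_{1}^{a_{1}}$ is a nonzerodivisor modulo $I$ — but this is exactly
Proposition~\ref{ifgradev2}$(c)$: under the regular-sequence hypothesis and
${\rm grade}(v)=2$, each $x_{i}$ (hence each $u_{i}$) is regular modulo $I$.
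Thus from $zu_{1}\in(v)\subseteq I$ and $u_{1}$ regular mod $I$ we would like
to conclude $z\in I$ — however caution is needed, because regularity of
$u_{1}$ mod $I$ only tells us $zu_{1}\in I\Rightarrow z\in I$, and we have
$zu_{1}\in(v)$, which is stronger than $zu_{1}\in I$; so indeed $zu_{1}\in I$
and $z\in I$ follows. The same argument with $u_{2}=-x_{2}^{b_{2}}$ handles
$(v):u_{2}\subseteq I$.

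Finally, for $(v):(u)$: one always has $(v):(u)=(v):u_{1}\cap(v):u_{2}$,
and since both equal $I$ by the above, $(v):(u)=I$ as well. Alternatively,
one notes $(v):(u)\subseteq(v):u_{1}=I$ and $I\subseteq(v):(u)$ because
$D\cdot(u)\subseteq(v)$ (Cramer) and $(v_{1},v_{2})\subseteq(v)\subseteq(v):(u)$.

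\textbf{Main obstacle.} I expect the only subtle point is making sure the
implication ``$zu_{1}\in(v)$ and $u_{1}$ regular mod $I$'' genuinely yields
$z\in I$ — one must be careful that regularity mod $I$ is used correctly
(it gives $z\in I$ from $zu_{1}\in I$, and $zu_{1}\in(v)\subseteq I$ supplies
the hypothesis), rather than mistakenly trying to deduce $z\in(v)$. Everything
else is a direct assembly of Proposition~\ref{ifgradev2}, Corollary~\ref{Icapu=v},
and Cramer's rule, all of which are available from the preceding sections.
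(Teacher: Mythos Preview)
Your proof is correct and takes essentially the same approach as the paper: both rest on Cramer's rule (giving $Du_{i}\in(v)$, hence $I\subseteq(v):u_{i}$) and the regularity of $u_{i}$ modulo $I$ from Proposition~\ref{ifgradev2}(c) (giving $(v):u_{i}\subseteq I:u_{i}=I$). The paper packages this as the one-line chain $(v):u_{1}=[I\cap(u)]:u_{1}=(I:u_{1})\cap[(u):u_{1}]=I$ via Corollary~\ref{Icapu=v}, whereas your element-chase is slightly more direct and in fact never uses the equality $I\cap(u)=(v)$ that you cite.
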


\begin{proof} 
By Corollary~\ref{Icapu=v}, $I\cap (u)=(v)$ and, by
Proposition~\ref{ifgradev2}, $x_{1}$ is regular modulo $I$. Then
\begin{eqnarray*}
(v):u_{1}=[I\cap (u)]:u_{1}=(I:u_{1})\cap [(u):u_{1}]=I:u_{1}=I.
\end{eqnarray*}
Analogously $I=(v):u_{2}$. That $I=(v):(u)$ follows immediately from
this, since $(v):(u)=((v):u_{1})\cap ((v):u_{2})$.
\end{proof}

In particular, $(v)$ is a radical ideal if and only if $I$ and $(u)$
are radical ideals. Indeed, by Remark~\ref{radv}, ${\rm rad}(v)={\rm
  rad}(I)\cap {\rm rad}(u)$ and so $(v)$ is radical if $I$ and $(u)$
are radical. Conversely, if $(v)$ is radical, then ${\rm rad}(I)={\rm
  rad}((v):u_{1})\subseteq {\rm rad}(v):u_{1}=(v):u_{1}=I$ and ${\rm
  rad}(u)={\rm rad}((v):D)\subseteq {\rm rad}(v):D=(v):D=(u)$, by
Proposition~\ref{ifgradev2}. In particular, if $a_{1}>1$ or $b_{2}>1$,
then $(u)$, and hence $(v)$, is not radical.

\begin{Proposition}
Let $I$ be an HN ideal. Suppose that $x=x_{1},x_{2},x_{3}$ is a
regular sequence and that ${\rm grade}(v)=2$. Then $I$ is
geometrically linked to $(u)$, i.e., $(u)=(v):I$, $I=(v):(u)$ and
$I\cap (u)=(v)$.
\end{Proposition}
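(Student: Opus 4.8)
The plan is to assemble the three required equalities from results already in hand, since the Proposition is essentially a repackaging of Corollary~\ref{Icapu=v}, Proposition~\ref{ifgradev2} and Lemma~\ref{ivu1}. First I would recall that geometric linkage of $I$ and $(u)$ via the complete intersection $(v)$ means precisely that $(v)\subseteq I\cap(u)$, that $(v):I=(u)$, that $(v):(u)=I$, and that $I\cap(u)=(v)$ (equivalently, that $(v)$ is unmixed of grade $2$ and $I,(u)$ are the ``linked'' ideals with no common component). Under the standing hypotheses — $x=x_{1},x_{2},x_{3}$ a regular sequence and ${\rm grade}(v)=2$ — we may even replace $(v)=(v_{1},v_{2})$ by a genuine complete intersection $(v_{1}+wv_{2},v_{2})$ if desired, but in fact all the ideal-quotient identities below hold for $(v)$ as written, so I would work directly with $(v)$.

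The key steps, in order, are: (i) quote Corollary~\ref{Icapu=v} to get $I\cap(u)=(v)$; (ii) quote Proposition~\ref{ifgradev2}$(a)$ to get $(v):I=(u)$ (there it is stated as $(v):D=(u)$ together with $(v):I=(u)$, the latter following because $D\in I$ gives $(v):I\subseteq(v):D=(u)$, while $u_{1}v_{1},u_{1}v_{2}\in(v)$ and Cramer's rule give $(u)D\subseteq(v)$, hence more directly $(u)I\subseteq(v)$ using $u_{1}v_{j},u_{2}v_{j}\in(v)$ and $(u)D\subseteq(v)$, so $(u)\subseteq(v):I$); and (iii) quote Lemma~\ref{ivu1} for the remaining identity $I=(v):(u)$. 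Then I would simply observe that these three statements, together with the trivial inclusion $(v)\subseteq I$ and $(v)\subseteq(u)$ (since $v_{j}=$ minors lie in $I$, and $(u)D\subseteq(v)$ with $v_{j}\in(u)$ shows $(v)\subseteq(u)$), are exactly the definition of geometric linkage. One should also note that $(v)$ has grade $2$ and is grade-unmixed by Proposition~\ref{ifgradev2}$(b)$, which is what legitimises calling this a linkage, and that $I$ and $(u)$ share no associated prime: any common associated prime would be an associated prime of $(v)$ of grade $2$ containing both some power of $x_{1}$ (from $(u)$'s structure, since ${\rm rad}(u)\supseteq(x_{1},x_{2})$... ) — more cleanly, $I+(u)$ contains $x_{1}^{a_{1}}$ and $x_{2}^{b_{2}}$ and $v_{1}$, hence contains a power of $x_{3}$ as well via $v_{1}=x_{1}^{c_{1}}-x_{2}^{b_{2}}x_{3}^{a_{3}}$, wait that gives nothing; instead use $D=x_{3}^{c_{3}}-x_{1}^{b_{1}}x_{2}^{a_{2}}\in I$, so $I+(u)\ni x_{3}^{c_{3}}$ modulo $(x_{1}^{b_{1}},x_{2}^{a_{2}})\subseteq (u)+\ldots$ — so ${\rm height}(I+(u))=3$ and no prime of grade $2$ contains $I+(u)$, giving disjointness of the two families of associated primes.

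Honestly, the only nontrivial content is already discharged by the earlier lemmas, so I do not anticipate a genuine obstacle; the main care-point is bookkeeping — making sure that the direction $(u)\subseteq(v):I$ (as opposed to the easy $(v):I\subseteq(u)$) is justified. I would handle that as follows: $u_{1}v_{1},u_{1}v_{2},u_{2}v_{1},u_{2}v_{2}\in(v)$ trivially, and $u_{1}D,u_{2}D\in(v)$ by Cramer's rule applied to $\Phi\cdot[u]^{\top}=[v]^{\top}$ (namely $D u_{i}$ is, up to sign, a $2\times 2$ determinant with a column of $v$'s, hence in $(v)$); therefore $(u)\cdot I=(u)\cdot(v_{1},v_{2},D)\subseteq(v)$, i.e.\ $(u)\subseteq(v):I$. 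Combined with $(v):I\subseteq(v):D=(u)$ from Proposition~\ref{ifgradev2}$(a)$, equality follows. I now write out the proof.

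\begin{proof}
Recall that, by definition, $I$ and $(u)$ are geometrically linked by the
ideal $(v)$ precisely when $(v)$ is unmixed of grade $2$, $(v)\subseteq
I\cap(u)$, $(v):I=(u)$, $(v):(u)=I$ and $I\cap(u)=(v)$.

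By Proposition~\ref{ifgradev2}$(b)$, under the present hypotheses $(v)$ has
grade $2$ and is grade-unmixed. Since $v_{1},v_{2}\in I$ and, by Cramer's
rule applied to $\Phi\cdot[u]^{\top}=[v]^{\top}$, also $Du_{1},Du_{2}\in(v)$,
while trivially $v_{j}u_{i}\in(v)$, we get $(u)I=(u)(v_{1},v_{2},D)\subseteq
(v)$; thus $(v)\subseteq(u)$ (take any $v_{j}$, which lies in $(u)$ since
$u_{1},u_{2}$ generate $(x_{1}^{a_{1}},x_{2}^{b_{2}})\ni v_{j}\bmod
(x_{3})$... more directly, both $v_{1}$ and $v_{2}$ visibly lie in
$(x_{1},x_{2})$ and one checks $v_{j}\in(u)+\,$none is needed: in fact
$(v)\subseteq I$ is what we use, together with $(v)\subseteq(u)$ which holds
because $I\cap(u)=(v)$ below). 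In any case $(v)\subseteq I\cap(u)$ is clear
from $v_{1},v_{2}\in I$ and from $Du_{i}\in(v)$ giving $(v)\subseteq(u)$ via
$v_{j}=x_{1}^{c_{1}}-x_{2}^{b_{2}}x_{3}^{a_{3}}$, $x_{1}^{c_{1}}=x_{1}^{b_{1}}u_{1}$, $x_{2}^{b_{2}}x_{3}^{a_{3}}=-x_{3}^{a_{3}}u_{2}$, so $v_{1}\in(u)$, and
similarly $v_{2}=x_{2}^{c_{2}}-x_{1}^{a_{1}}x_{3}^{b_{3}}$,
$x_{2}^{c_{2}}=-x_{2}^{a_{2}}u_{2}$, $x_{1}^{a_{1}}x_{3}^{b_{3}}=x_{3}^{b_{3}}u_{1}$, so $v_{2}\in(u)$.

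Next, $I\cap(u)=(v)$ is exactly Corollary~\ref{Icapu=v}. The inclusion
$(u)\subseteq(v):I$ follows from $(u)I\subseteq(v)$ established above, and the
reverse inclusion from $(v):I\subseteq(v):D=(u)$, where the last equality is
Proposition~\ref{ifgradev2}$(a)$; hence $(v):I=(u)$. Finally
$I=(v):(u)$ is Lemma~\ref{ivu1}. This establishes all the conditions in
the definition of geometric linkage, so $I$ is geometrically linked to
$(u)$ by the complete intersection $(v)$ (which, if a genuine regular
sequence is wanted, may be replaced by $(v_{1}+wv_{2},v_{2})$ for a suitable
$w\in A$, as noted at the start of this section).
\end{proof}
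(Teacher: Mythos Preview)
Your proposal is correct and follows the same route as the paper: both derive $(u)=(v):I$, $I=(v):(u)$, and $I\cap(u)=(v)$ directly from Proposition~\ref{ifgradev2}$(a)$, Lemma~\ref{ivu1}, and Corollary~\ref{Icapu=v}, respectively, after noting that $(v)$ is a grade-$2$ complete intersection contained in $I\cap(u)$. The only difference is cosmetic---the paper's proof is two lines, whereas yours carries some scratch-work that should be excised (the parenthetical wandering about $(v)\subseteq(u)$, and the re-derivation of $(v):I=(u)$ via Cramer's rule, which is redundant since Proposition~\ref{ifgradev2}$(a)$ already records this equality).
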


\begin{proof}
By \cite[Theorem~125]{kaplansky}, $(v)$ can be generated by a regular
sequence clearly contained in $I\cap (u)$. Moreover, that $(u)=(v):I$,
$I=(v):(u)$ and $I\cap (u)=(v)$ follow from
Proposition~\ref{ifgradev2}, Lemma~\ref{ivu1} and Corollary~\ref{Icapu=v},
respectively.
\end{proof}

In \cite[page 326 and ff.]{vasconcelos}, Vasconcelos gives a proof
that $I=(v):(u)$, but it would seem that there is a hidden Gorenstein
hypothesis in his Corollary~4.1.1 (see the appeal to Corollary~A.9.1
in its proof; see also \cite[Proposition~2.4]{ulrich}, where the local
Gorenstein hypothesis is used again).

In particular, one has ${\rm Ass}(A/I)\cup {\rm Ass}(A/(u))={\rm
  Ass}(A/(v))$ (see e.g. \cite[Remark~2.2]{ulrich}).

\section{HN ideals are almost complete intersections}\label{aci}

In this section, $A$ will again be a commutative Noetherian ring and
$x=x_{1},x_{2},x_{3}$ a regular sequence. Moreover, as before, we will
suppose that the ideal $(v)$ has grade 2 (see
Proposition~\ref{grade2}).

\begin{Lemma}\label{mui=3}
Let $I$ be an HN ideal. Suppose that $x=x_{1},x_{2},x_{3}$ is a
regular sequence and that ${\rm grade}(v)=2$. Then $I$ is minimally
generated by three elements.
\end{Lemma}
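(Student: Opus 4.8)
The plan is to show that $I$ cannot be generated by fewer than three elements, since it is already presented with three generators $v_1,v_2,D$. Because grade$(v)=2$ and $x_1,x_2,x_3$ is a regular sequence, Proposition~\ref{ifgradev2} tells us that $I$ is grade-unmixed of grade $2$, and each $x_i$ is regular modulo $I$. Since $I$ has grade $2$, it can be generated by two elements only if it is a complete intersection; so it suffices to rule out $I$ being a complete intersection — equivalently, to show $\mu(I)\neq 2$, where $\mu$ denotes minimal number of generators.

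The cleanest route is to localize. If $I$ were generated by two elements, then $I$ would be a grade-$2$ complete intersection, hence unmixed and (being the radical of a height-$2$ ideal cut out by two elements in the appropriate localization) Cohen--Macaulay of codimension $2$; in particular, for every associated (= minimal) prime $\fp$ of $I$, the localization $I_\fp$ would be generated by two elements, i.e. $\mu(I_\fp)\le 2$. So the idea is to exhibit a prime $\fp\supseteq I$ at which $I_\fp$ genuinely needs three generators. A natural candidate is a minimal prime $\fp$ containing $x_1,x_2,x_3$ — but wait, that has grade $3$ by regularity of $x_1,x_2,x_3$, so it cannot be a minimal prime of $I$ since $I$ has grade $2$. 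Instead I would work modulo $I$ and count: pass to $B = A/I$; by Proposition~\ref{ifgradev2}$(c)$ the images $\bar x_1,\bar x_2,\bar x_3$ are nonzerodivisors, and one checks the three binomial relations $v_1,v_2,D=0$ become the statement that $(\bar x_1^{a_1},\bar x_2^{b_2})$ times a certain $2\times 2$ matrix $\Phi$ equals $(\bar v$-type$)$ relations, forcing the $2\times 3$ matrix $\overline{\cm}$ to have rank $1$ after a faithfully flat base change where the $x_i$ become units. The Eagon--Northcott/Hilbert--Burch structure then pins down that the presentation of $I$ by the $2\times 2$ minors of a $2\times 3$ matrix is minimal precisely when no entry column is a unit multiple of another — which, for the matrix $\cm$ with all exponents positive and $x_1,x_2,x_3$ a regular sequence, never happens.

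More concretely and with less machinery, here is the step I would actually carry out. Consider the Eagon--Northcott resolution: since $I=I_2(\cm)$ has grade $2$ equal to the expected grade $3-2+1=2$ for the $2\times 3$ determinantal ideal, the Hilbert--Burch complex
\begin{eqnarray*}
0 \longrightarrow A^2 \stackrel{\cm^{\top}}{\longrightarrow} A^3 \longrightarrow I \longrightarrow 0
\end{eqnarray*}
is exact, so $A/I$ has projective dimension $2$ and $I$ has a minimal free presentation with $3$ generators provided the entries of $\cm$ lie in a suitable proper ideal — i.e. provided $\cm\bmod\fm$ has rank $\le 1$ for the relevant maximal-type ideals, which here reduces to: not two columns of $\cm$ differ by a unit. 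If instead $\mu(I)=2$, then $I$ would have a minimal free resolution $0\to A\to A^2\to I\to 0$ (a complete intersection), and uniqueness of minimal free resolutions would give $\mathrm{pd}(A/I)=2$ with second Betti number $1$, contradicting the Hilbert--Burch resolution above whose second Betti number is $2$ once we verify that resolution is minimal. So the real content is: \emph{the Hilbert--Burch resolution of $I$ above is minimal}, i.e. $\cm^{\top}$ has all entries in some maximal ideal after localizing at an associated prime of $I$, equivalently the ideal $I_1(\cm)=(x_1^{a_1},x_2^{a_2},x_3^{a_3},x_1^{b_1},x_2^{b_2},x_3^{b_3})$ is contained in such a prime. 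I would check this by exhibiting, via Proposition~\ref{ifgradev2} and the associated-prime description coming from linkage in Section~\ref{licci} ($\mathrm{Ass}(A/I)\subseteq\mathrm{Ass}(A/(v))$), an associated prime $\fp$ of $I$ with $I_1(\cm)\subseteq\fp$; indeed any minimal prime of $(v)=(x_1^{c_1}-x_2^{b_2}x_3^{a_3},\,x_2^{c_2}-x_1^{a_1}x_3^{b_3})$ lying over $I$ must, since it has grade $2<3$, miss at least one $x_i$, and then the binomial relations force the remaining two variables' relevant powers into $\fp$ as well, giving $I_1(\cm)\subseteq\fp$.

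The main obstacle will be the last verification — showing the Hilbert--Burch presentation is minimal, equivalently that $I_1(\cm)$ sits inside an associated prime of $I$. The subtlety is that with the $x_i$ not forming a monomial regular sequence in general it is conceivable (a priori) that the entries of $\cm$ generate the unit ideal locally at every associated prime, which would collapse $\mu(I)$ to $2$. I expect to resolve this by the grade count: an associated prime $\fp$ of $I$ has grade exactly $2$ (Proposition~\ref{ifgradev2}$(b)$), while $(x_1,x_2,x_3)$ has grade $3$, so $\fp$ cannot contain all three $x_i$; say $x_3\notin\fp$; then from $v_1\in\fp$ and $v_2\in\fp$ one reads $x_1^{c_1}=x_2^{b_2}x_3^{a_3}$ and $x_2^{c_2}=x_1^{a_1}x_3^{b_3}$ in the domain $A_\fp/\fp A_\fp$, and a short elimination (raise and substitute, using $x_3$ a unit there) forces $x_1,x_2\in\fp$, hence $x_1^{a_1},x_2^{b_2},x_1^{b_1},x_2^{a_2}\in\fp$ and also $x_3^{c_3}=x_1^{b_1}x_2^{a_2}\in\fp$ from $D$, so $x_3^{a_3}\in\fp$ as well (from $x_1^{c_1}\in\fp$). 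Thus every entry of $\cm$ lies in $\fp$, so the presentation matrix has entries in $\fp$ and the Hilbert--Burch resolution is minimal after localizing at $\fp$, giving $\mu(I_\fp)=3$, whence $\mu(I)\ge 3$; combined with the displayed generation by $v_1,v_2,D$, this yields $\mu(I)=3$.
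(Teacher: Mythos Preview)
Your argument has a genuine error at the final step. You localize at an associated prime $\fp$ of $I$ and claim that ``a short elimination forces $x_{1},x_{2}\in\fp$''. This is false: by Proposition~\ref{ifgradev2}$(c)$, which you yourself invoke earlier, \emph{none} of $x_{1},x_{2},x_{3}$ lies in any associated prime of $I$. Your argument is even self-contradictory: starting from $x_{3}\notin\fp$ you end up concluding $x_{3}^{a_{3}}\in\fp$, i.e.\ $x_{3}\in\fp$. What actually happens in the residue field $A_{\fp}/\fp A_{\fp}$ is that the binomial relations are perfectly consistent with all three $\bar x_{i}$ being units; no elimination forces any of them to vanish. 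Consequently every entry of $\cm$ is a unit in $A_{\fp}$, the Hilbert--Burch resolution is \emph{not} minimal there, and in fact $I_{\fp}$ is generated by two elements (this is exactly the local complete-intersection property established later: $IA_{x_{1}}=(v)A_{x_{1}}$).

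The fix is the very move you considered and then discarded. Your ``natural candidate'' --- a minimal prime $\fp$ of $(x_{1},x_{2},x_{3})$ --- is the right prime at which to localize. You rejected it because it is not a minimal prime of $I$, but that is irrelevant: you only need $\fp\supseteq I$ so that $I_{\fp}$ is a proper ideal of the local ring $A_{\fp}$, and then $\mu(I)\geq\mu(I_{\fp})$. At such a $\fp$ every $x_{i}$ lies in the maximal ideal, hence so does every entry of the matrices in the Hilbert--Burch complex, which is therefore a minimal free resolution there and gives $\mu(I_{\fp})=3$. This is precisely the paper's second proof; the paper also offers a first, more elementary argument (after the same localization) that if one of $v_{1},v_{2},D$ were redundant then $I$ would sit inside an ideal generated by two of the $x_{i}$, whence a minimal prime of that ideal would be a height-$2$ prime over $I$ containing all three $x_{i}$, contradicting Proposition~\ref{ifgradev2}$(c)$.
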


\begin{proof}
By Proposition~\ref{ifgradev2}, none of $x_{1}$, $x_{2}$ or $x_{3}$ is
in any minimal prime of $I$. Localise at a minimal prime of
$(x_{1},x_{2},x_{3})$ without changing notations. So we suppose that
$A$ is local. If $I$ has a minimal generating set of less than three
elements, then at least one element of the generating set
$v_{1},v_{2},D$ is redundant, $D$ say. In this case,
$I=(v_{1},v_{2})\subseteq (x_{1},x_{2})$. By the Generalized Principal
Ideal Theorem (\cite[Theorem~152]{kaplansky}), there exists a minimal
prime $\fq$ of $(x_{1},x_{2})$ of height 2. But then $\fq$ would be a
minimal prime over $I$ containing $(x_{1},x_{2},x_{3})$, a
contradiction (and similarly for the possible variations on this
argument).

Alternatively, localise at a minimal prime containing
$(x_{1},x_{2},x_{3})$ without changing notations. The resolving
complex constructed in \cite[Section~2 and Theorem~2]{northcott} in
the case of our specific $\Phi$, $u$ and $v$ yields the following free
resolution of $I$:
\begin{eqnarray*}
0\rightarrow A^{2}\buildrel [\varphi]\over\rightarrow A^{3}\buildrel
[\psi]\over\rightarrow A\rightarrow A/(v,D)\rightarrow 0,
\end{eqnarray*}
where
\begin{eqnarray*}
[\varphi]=\left( \begin{array}{rr}
-x_{1}^{a_{1}}&x_{2}^{b_{2}}\\
x_{2}^{a_{2}}&-x_{3}^{b_{3}}\\
x_{3}^{a_{3}}&-x_{1}^{b_{1}}
\end{array}\right)\mbox{ and }
[\psi]=\left( \begin{array}{ccc}-D&-v_{1}&v_{2}\end{array}\right).
\end{eqnarray*}
Since all the entries in the matrix maps are in the maximal ideal,
this Hilbert-Burch presentation of $I$ is minimal. 
\end{proof}

\begin{Remark}\label{aciingeneral}
{\rm Actually, by Valla's argument at the top of page~10 in
  \cite{valla}, Lemma~\ref{mui=3} holds for an arbitrary HN
  ideal. Note also that the second proof presented above requires only
  that ${\rm grade}(v)=2$. Remark too that from this resolution,
  $[\psi]\cdot [\varphi]=0$ and so
  $Dx_{1}^{a_{1}}-v_{1}x_{2}^{a_{2}}+v_{2}x_{3}^{a_{3}}=0$. Therefore
  $Dx_{1}^{a_{1}}\in (v)$ and $IA_{x_{1}}=(v)A_{x_{1}}$. (This can
  also be deduced from the equality $I=(v):(u)$.)  }\end{Remark}

\begin{Proposition}\label{proposition-aci}
Let $I$ be an HN ideal. Suppose that $x=x_{1},x_{2},x_{3}$ is a
regular sequence and that ${\rm grade}(v)=2$. Then $I$ is an almost
complete intersection (in the sense of \cite{hmv}).
\end{Proposition}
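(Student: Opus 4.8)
The plan is to verify the definition of an almost complete intersection in the sense of \cite{hmv}: an ideal $I$ of grade $g$ that can be generated by $g+1$ elements but not by $g$ elements. First I would invoke Proposition~\ref{ifgradev2}$(b)$, which gives that $I$ has grade $2$ (and is grade-unmixed), so here $g=2$. Then I would apply Lemma~\ref{mui=3}: under the standing hypotheses ($x=x_{1},x_{2},x_{3}$ a regular sequence and ${\rm grade}(v)=2$) the HN ideal $I$ is minimally generated by exactly three elements, namely $v_{1},v_{2},D$. Since $3=g+1$, the ideal $I$ is generated by $g+1$ elements but not by $g$ of them, which is precisely the definition of an almost complete intersection.

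The only remaining point is to confirm that $I$ is not a complete intersection, i.e.\ that it genuinely requires three generators and not two; but this is exactly the content of Lemma~\ref{mui=3} (together with Proposition~\ref{ifgradev2} to rule out $x_i$ lying in an associated prime, which is what makes the local minimal-generator count meaningful). So the proof is essentially a one-line assembly of the two preceding results, and I do not anticipate any real obstacle: the substantive work — the free resolution of Northcott type and the Hilbert--Burch minimality argument showing three generators are needed — has already been carried out in Lemma~\ref{mui=3}.

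Concretely, I would write: By Proposition~\ref{ifgradev2}$(b)$, $I$ has grade $2$. By Lemma~\ref{mui=3}, $I$ is minimally generated by three elements. Hence $I$ is generated by $\mathrm{grade}(I)+1$ elements but not by $\mathrm{grade}(I)$ elements, so $I$ is an almost complete intersection in the sense of \cite{hmv}. If desired one could add the observation (as in Remark~\ref{aciingeneral}) that this conclusion in fact only needs ${\rm grade}(v)=2$, the regular-sequence hypothesis on $x$ being used here merely to streamline the localisation step in Lemma~\ref{mui=3}.
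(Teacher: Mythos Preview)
Your argument covers the generator count correctly, but it rests on an incomplete reading of the definition. In \cite{hmv} an almost complete intersection is not merely an ideal of grade $g$ minimally generated by $g+1$ elements: one must also check that $I$ is \emph{generically a complete intersection}, i.e.\ that $I_{\mathfrak{p}}$ is generated by a regular sequence for every minimal prime $\mathfrak{p}$ of $I$. The paper's proof verifies precisely this third condition, and it is not automatic from the other two.

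The paper's argument for this extra point goes as follows. By Proposition~\ref{ifgradev2}$(c)$ (or the observation in Lemma~\ref{mui=3}), no $x_{i}$ lies in any minimal prime of $I$; in particular $x_{1}\notin\mathfrak{p}$ for each $\mathfrak{p}\in\mathrm{Min}(A/I)$. From Remark~\ref{aciingeneral} (or directly from $Dx_{1}^{a_{1}}\in(v)$) one has $IA_{x_{1}}=(v)A_{x_{1}}$, whence $I_{\mathfrak{p}}=(v)_{\mathfrak{p}}$ is generated by two elements in a height-$2$ prime, so is a complete intersection locally. You should add this verification; once you do, your assembly of Proposition~\ref{ifgradev2}$(b)$ and Lemma~\ref{mui=3} together with this local step matches the paper's proof.
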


\begin{proof}
On the one hand, $I$ is minimally generated by 3 elements and has
height 2. To see this, note that $I$ contains $(v)$ so has grade at
least 2.  On the other hand, since ${\rm rad}(I)={\rm rad}(v_{1},g)$,
any minimal prime of $I$ is a minimal prime of $(v_{1},g)$, which by
the Generalized Principal Ideal Theorem will be of height at most
2. Finally, $IA_{\mathfrak{p}}$ is locally a complete intersection at
primes $\fp$ minimal over $I$, because such a prime $\fp$ fails to
contain $x_{1}$, so $IA_{\mathfrak{p}}\cong
(IA_{x_{1}})A_{\mathfrak{p}_{x_{1}}}$. But $IA_{x_{1}}=(v)A_{x_{1}}$
and $IA_{\mathfrak{p}}=(v)A_{\mathfrak{p}}$.
\end{proof}

\begin{Remark}{\rm 
Let $I$ be an HN ideal. Suppose that $x=x_{1},x_{2},x_{3}$ is a
regular sequence and that ${\rm grade}(v)=2$. Then $I$ is generated by
a $d$-sequence, $I$ is of linear type and of strong linear
type. }\end{Remark}

\begin{proof}
Indeed, since $(v):D=(u)$ and $I\cap (u)=(v)$, then $((v):D)\cap
I=(u)\cap I=(v)$. Therefore $(v):D^{2}=(v):D$. In particular, for some
$w\in A$, $v_{1}+wv_{2},v_{2},D$ is a $d$-sequence which generates $I$
(cf. \cite{huneke2} for a general discussion of $d$-sequences, and
[loc. cit., Example~4] in particular for the result at issue here).
Hence $I$ is an ideal of linear type, i.e., the canonical graded
homomorphism $\alpha:{\bf S}(I)\rightarrow {\bf R}(I)$ between the
symmetric algebra of $I$ and the Rees algebra of $I$ is an isomorphism
(see e.g. \cite{hmv}). In fact, one has a little more, namely $I$ is
of strong linear type, i.e., $H_{2}(A,B,{\bf G}(I))=0$, where
$H_{2}(A,B,{\bf G}(I))$ stands for the second Andr\'e-Quillen homology
group of the $A$-algebra $B=A/I$ with coefficients in the $B$-module
${\bf G}(I)$, the associated graded ring of $I$. It is well-known that
${\rm ker}(\alpha_{2})\cong H_{2}(A,B,B)$. Moreover, if $A\supset \bq$
and $H_{2}(A,B,{\bf G}(I))=0$, then $I$ is of linear type. Since $B$
as an $A$-module has projective dimension 2 and $I$ is of linear type,
then the converse also holds and one has $H_{2}(A,B,{\bf G}(I))=0$
(see \cite[Theorem~3.4 and Proposition~3.10]{planas}).
\end{proof}

\section{When are HN ideals prime?}\label{prime}

From now on until the end of the paper, $A=k[x_{1},x_{2},x_{3}]$ is
the polynomial ring with $x=x_{1},x_{2},x_{3}$ three variables over a
field $k$. In particular, $v_{1},v_{2}$ is a regular sequence in any
order, by Example~\ref{as*}. Denote by $\fm$ the maximal ideal of $A$
generated by $x_{1},x_{2},x_{3}$.

We begin with the following definition which will play a key role. The
idea behind it lies in the subsequent remark and in the proofs of
Example~\ref{as*} and Lemma~\ref{unique}.

\begin{Definition}{\rm 
Let $J$ be an ideal of $A$, $J\subset \fm$, such that $x_{i}A+J$ is
$\fm$-primary for all $i=1,2,3$. The {\em integer vector associated
  to} $J$, $m(J)=(m_{1}(J),m_{2}(J),m_{3}(J))\in\bn^{3}$, is defined
as $m_{i}(J)={\rm length}(A/(x_{i}A+J))$, for each $i=1,2,3$.
}\end{Definition}

\begin{Remark}\label{edei}{\rm 
Let $I=(x_{1}^{c_{1}}-x_{2}^{b_{2}}x_{3}^{a_{3}},
x_{2}^{c_{2}}-x_{1}^{a_{1}}x_{3}^{b_{3}},x_{3}^{c_{3}}-x_{1}^{b_{1}}x_{2}^{a_{2}})$
be an HN ideal. Then, for each $i=1,2,3$, $x_{i}A+I$ is $\fm$-primary
and $m_{i}(I)=e(x_{i}\cdot
A_{\mathfrak{m}}/I_{\mathfrak{m}})$. Moreover $m(I)$ can be directly
obtained as
$m(I)=(c_{2}c_{3}-a_{2}b_{3},c_{1}c_{3}-a_{3}b_{1},c_{1}c_{2}-a_{1}b_{2})$.
In particular, $m(I)\in\bn^{3}$ generates the $\bq$-linear subspace of
solutions of system~(\ref{m-system1}) in Example~\ref{as*} (see
Remark~\ref{unique-sol}), and so $m_{i}(I)$ is the weight given to
$x_{i}$, $i=1,2,3$, in Example~\ref{as*}.  }\end{Remark}

\begin{proof}
We have $x_{1}A+I=(x_{1},x_{2}^{c_{2}},x_{2}^{b_{2}}x_{3}^{a_{3}},
x_{3}^{c_{3}})$, which is $\fm$-primary. Thus $m_{1}(I)$ is finite and
can be calculated as ${\rm
  length}(A_{\mathfrak{m}}/(x_{1}A+I)_{\mathfrak{m}})$. In particular,
$x_{1}\cdot A_{\mathfrak{m}}/I_{\mathfrak{m}}$ is a parameter ideal of
the Cohen-Macaulay local ring $A_{\mathfrak{m}}/I_{\mathfrak{m}}$
(recall that $I$ is height-unmixed; see
Proposition~\ref{ifgradev2}). By \cite[Proposition~11.1.10]{sh}, ${\rm
  length}(A_{\mathfrak{m}}/(x_{1}A+I)_{\mathfrak{m}})= e(x_{1}\cdot
A_{\mathfrak{m}}/I_{\mathfrak{m}})$. On the other hand, the quotient
ring $A/(x_{1}A+I)$ is isomorphic to
$k[x_{2},x_{3}]/(x_{2}^{c_{2}},x_{2}^{b_{2}}x_{3}^{a_{3}},x_{3}^{c_{3}})$
which has length $(a_{2}+b_{2})(a_{3}+b_{3})-a_{2}b_{3}$. There are
analogous arguments for $m_{2}(I)$ and $m_{3}(I)$.
\end{proof}

Now, let us extend the definition of Herzog ideals introduced in the
first section.

\begin{Definition}{\rm 
Let $n=(n_{1},n_{2},n_{3})\in\bn^{3}$ be an integer vector with
greatest common divisor not necessarily equal to 1. The {\em Herzog
  ideal associated to $n$} is the prime ideal $\fp_{n}$ defined as the
kernel of the morphism $\varphi_{n}:A\rightarrow k[t]$ sending $x_{i}$
to $t^{n_{i}}$ for each $i=1,2,3$.  }\end{Definition}

We then have the following:

\begin{Remark}\label{pm=pn}{\rm 
Let $m=(m_{1},m_{2},m_{3})\in\bn^{3}$ and
$n=(n_{1},n_{2},n_{3})\in\bn^{3}$ such that $m=dn$ for some $d\in\bn$.
Then $\fp_{m}=\fp_{n}$.  }\end{Remark}

\begin{proof}
Since $A/\fp_{n}\cong k[t^{n_{1}},t^{n_{2}},t^{n_{3}}]\subset k[t]$ is
an integral extension, ${\rm dim}(A/\fp_{n})=1$ and $\fp_{n}$ is a
prime ideal of height 2. Analogously, $\fp_{m}$ is a prime ideal of
height 2. If ${\rm gcd}(n_{1},n_{2},n_{3})=1$, using an explicit
system of generators of $\fp_{n}$ (given by Herzog in \cite{herzog};
see also \cite[pages~138-139]{kunz}), one can easily check that
$\fp_{n}\subseteq \fp_{m}$, and so they are equal. In general,
factoring out the greatest common divisor $e={\rm
  gcd}(n_{1},n_{2},n_{3})$, let $r=n/e=(r_{1},r_{2},r_{3})$, where
${\rm gcd}(r_{1},r_{2}.r_{3})=1$. Then $n=er$ and $m=(de)r$, thus
$\fp_{m}=\fp_{n}=\fp_{r}$.
\end{proof}

\begin{Lemma}\label{edepn}
Let $\fp_{n}$ be the Herzog ideal associated to
$n=(n_{1},n_{2},n_{3})\in\bn^{3}$. Then, for all $i=1,2,3$,
$x_{i}A+\fp_{n}$ is $\fm$-primary and $m_{i}(\fp_{n})=e(x_{i}\cdot
A_{\mathfrak{m}}/(\fp_{n})_{\mathfrak{m}})$. Moreover
$m(\fp_{n})=n/{\rm gcd}(n)$.
\end{Lemma}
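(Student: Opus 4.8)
The plan is to reduce the statement about a general Herzog ideal $\fp_{n}$ to the case already handled by Remark~\ref{edei}. First I would invoke Remark~\ref{pm=pn}: writing $e={\rm gcd}(n)$ and $r=n/e$ with ${\rm gcd}(r)=1$, we have $\fp_{n}=\fp_{r}$, so it suffices to prove the assertions for $\fp_{r}$ with $r$ having coprime entries. In that case $\fp_{r}$ is exactly the Herzog ideal in the classical sense described in the introduction, and Herzog's explicit generators (as in \cite{herzog}, see also \cite[pages~138-139]{kunz}) exhibit $\fp_{r}$ as an HN ideal, i.e., $\fp_{r}=I_{2}(\cm)$ for a suitable matrix $\cm$ built from exponents $a,b\in\bn^{3}$. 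Concretely, $\fp_{r}$ is of type (ci) or (aci) from the introduction, and in either case it is of the form $(v_{1},v_{2},D)$ (with $D$ redundant in the (ci) case); thus Remark~\ref{edei} applies directly to $\fp_{r}$.

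Next I would read off the conclusions. By Remark~\ref{edei}, for each $i$ the ideal $x_{i}A+\fp_{r}$ is $\fm$-primary, $m_{i}(\fp_{r})=e(x_{i}\cdot A_{\mathfrak{m}}/(\fp_{r})_{\mathfrak{m}})$, and $m(\fp_{r})$ is given by the formula $(c_{2}c_{3}-a_{2}b_{3},c_{1}c_{3}-a_{3}b_{1},c_{1}c_{2}-a_{1}b_{2})$, which by Remark~\ref{unique-sol} is the (unique up to rational scalar) positive integer solution of system~(\ref{m-system1}). Since ${\rm gcd}(m(\fp_{r}))$ divides this scalar ambiguity and $r$ itself is a coprime solution of the corresponding homogeneity relations for $\fp_{r}$, the vector $m(\fp_{r})$ must be an integer multiple of $r$; as both are positive and $r$ is primitive, comparing shows $m(\fp_{r})=(\text{something})\cdot r$. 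To pin down that the multiplier is $1$, I would use that the weights $r_{i}$ make $\varphi_{r}$ degree-preserving onto $k[t]$ with $t$ of weight $1$, and that $m_{i}(\fp_{r})={\rm length}(A/(x_{i}A+\fp_{r}))$ counts, via the isomorphism $A/\fp_{r}\cong k[t^{r_{1}},t^{r_{2}},t^{r_{3}}]$, the length of $k[t^{r_{1}},t^{r_{2}},t^{r_{3}}]/(t^{r_{i}})$, which equals $r_{i}$ — the classical computation that the multiplicity of the parameter $x_{i}$ on the monomial curve is exactly $r_{i}$. Hence $m(\fp_{r})=r=n/{\rm gcd}(n)$.

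Finally I would transfer back: since $\fp_{n}=\fp_{r}$ as ideals of $A$, the quantities $m_{i}(\fp_{n})=m_{i}(\fp_{r})$ and $e(x_{i}\cdot A_{\mathfrak{m}}/(\fp_{n})_{\mathfrak{m}})=e(x_{i}\cdot A_{\mathfrak{m}}/(\fp_{r})_{\mathfrak{m}})$ coincide with those for $\fp_{r}$, and $n/{\rm gcd}(n)=r$, so $m(\fp_{n})=n/{\rm gcd}(n)$ as claimed. The $\fm$-primariness of $x_{i}A+\fp_{n}$ also follows, and since $\fp_{n}$ is height-unmixed (being prime) the passage from length to multiplicity via \cite[Proposition~11.1.10]{sh} is justified exactly as in Remark~\ref{edei}.

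The main obstacle I anticipate is verifying cleanly that $m_{i}(\fp_{r})=r_{i}$ rather than some multiple: one must be careful that the two types (ci) and (aci) of Herzog generators both yield the stated length, and that the bookkeeping of exponents $a,b,c$ coming from Herzog's normal form matches the homogeneity weights $r_{i}$. This is essentially the content of the computation in the proof of Remark~\ref{edei} combined with the coprimality of $r$; alternatively one sidesteps it by the direct length count in $k[t^{r_{1}},t^{r_{2}},t^{r_{3}}]/(t^{r_{i}})$, which is immediate once one knows $r$ is primitive, so the obstacle is more one of organizing the reduction than of genuine difficulty.
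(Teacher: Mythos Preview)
Your approach is correct but more roundabout than the paper's, and the detour does not actually save work. The paper proceeds directly: after reducing to ${\rm gcd}(n)=1$ via Remark~\ref{pm=pn}, it shows $x_iA+\fp_n$ is $\fm$-primary by observing that $x_j^{n_i}-x_i^{n_j}\in\fp_n$, identifies $A/(x_iA+\fp_n)$ with $R/t^{n_i}R$ for $R=k[t^{n_1},t^{n_2},t^{n_3}]$, and then computes this length via multiplicity theory---the extension $R_{\fn}\subset k[t]_{(t)}$ is birational and finite (using ${\rm gcd}(n)=1$), so $e(t^{n_i}R_{\fn})=e(t^{n_i}k[t]_{(t)})=n_i$ by \cite[Corollary~11.2.6 and Proposition~11.1.10]{sh}. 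Your route through Remark~\ref{edei} requires Herzog's structure theorem and hits the wrinkle that in the complete-intersection case (ci) one has $b_2=0$, so $\fp_r$ is not an HN ideal in the paper's sense (Definition requires $a_i,b_j>0$) and Remark~\ref{edei} does not literally apply, though its proof adapts easily. More importantly, the reduction does not sidestep the central computation: you still need ${\rm length}(R/t^{r_i}R)=r_i$, which you describe as ``immediate once one knows $r$ is primitive'' but which is precisely the step the paper justifies carefully. Your implicit argument (the $k$-basis of $R/t^{r_i}R$ is indexed by the Ap\'ery set of the numerical semigroup with respect to $r_i$, which has cardinality $r_i$) is a valid and arguably more elementary alternative to the multiplicity transfer, but it should be stated and proved rather than asserted.
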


\begin{proof}
By the preceding remark we clearly can suppose that ${\rm
  gcd}(n_{1},n_{2},n_{3})=1$. Since
$x_{2}^{n_{1}}=x_{1}^{n_{2}}+(x_{2}^{n_{1}}-x_{1}^{n_{2}})\in
x_{1}A+\fp_{n}$, and analogously for $x_{3}$, $x_{1}A+\fp_{n}$ is
$\fm$-primary. Thus $m_{1}(\fp_{n})$ is finite and can be calculated
as ${\rm length}
(A_{\mathfrak{m}}/(x_{1}A+\fp_{n})_{\mathfrak{m}})$. In particular,
$x_{1}\cdot A_{\mathfrak{m}}/(\fp_{n})_{\mathfrak{m}}$ is an $\fm
A_{\mathfrak{m}}/(\fp_{n})_{\mathfrak{m}}$-primary ideal of the
Cohen-Macaulay one-dimensional domain
$A_{\mathfrak{m}}/(\fp_{n})_{\mathfrak{m}}$. By
\cite[Proposition~11.1.10]{sh}, ${\rm length}
(A_{\mathfrak{m}}/(x_{1}A+\fp_{n})_{\mathfrak{m}})=e(x_{1}\cdot
A_{\mathfrak{m}}/(\fp_{n})_{\mathfrak{m}})$. On the other hand, the
quotient ring $A/(x_{1}A+\fp_{n})$ is isomorphic to $R/t^{n_{1}}R$,
where $R={\rm Im}(\varphi)=k[t^{n_{1}},t^{n_{2}},t^{n_{3}}]$, so ${\rm
  length}(A/(x_{1}A+\fp_{n}))={\rm length}(R/t^{n_{1}}R)$. To
calculate the latter one can localise at the maximal ideal $\fn
=(t^{n_{1}},t^{n_{2}},t^{n_{3}})$ since $t^{n_{1}}R$ is $\fn$-primary,
as $(t^{n_{j}})^{n_{1}}\in t^{n_{1}}R$, $j=2,3$. Since
$t^{n_{1}}R_{\mathfrak{n}}$ is a parameter ideal of a Cohen-Macaulay
local ring, ${\rm length}(R_{\mathfrak{n}}/t^{n_{1}}R_{\rm
  \mathfrak{n}})=e(t^{n_{1}}R_{\mathfrak{n}})$ (again by
\cite[Proposition~11.1.10]{sh}).

Because $1=s_{1}n_{1}+s_{2}n_{2}+s_{3}n_{3}$ for some $s_{i}\in\bz$,
$R\subset k[t]$ is a birational integral extension. Set
$S=R\setminus\fn$. Since $tk[t]$ is the only nonzero prime $\fq$ of $k[t]$
such that $\fq \cap S=\emptyset$, the saturation of $S$ in $k[t]$ is
$k[t]\setminus tk[t]$, so $R_{\mathfrak{n}}\subset k[t]_{(t)}$ is a
birational finite extension. Then
$e(t^{n_{1}}R_{\mathfrak{n}})=e(t^{n_{1}}k[t]_{(t)})$ (see
\cite[Corollary~11.2.6]{sh}). By \cite[Proposition~11.1.10]{sh} again,
the latter is equal to ${\rm
  length}(k[t]_{(t)}/t^{n_{1}}k[t]_{(t)})=n_{1}$.
\end{proof}

In particular, one has a kind of converse of Remark~\ref{pm=pn}.

\begin{Corollary}
Let $m=(m_{1},m_{2},m_{3})\in\bn^{3}$ and
$n=(n_{1},n_{2},n_{3})\in\bn^{3}$. Then $\fp_{m}=\fp_{n}$ if and only
if $m$ and $n$ are linearly dependent over the field $\bq$.
\end{Corollary}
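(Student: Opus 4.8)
The plan is to prove the Corollary by combining Remark~\ref{pm=pn} with Lemma~\ref{edepn}. The statement to establish is that, for $m,n\in\bn^{3}$, one has $\fp_{m}=\fp_{n}$ if and only if $m$ and $n$ are $\bq$-linearly dependent. Note that since $m,n$ lie in $\bn^{3}$ (so in particular both are nonzero), $\bq$-linear dependence of the pair $\{m,n\}$ is the same as saying that one is a positive rational multiple of the other, i.e., there exist $p,q\in\bn$ with $qm=pn$; equivalently $m/\gcd(m)=n/\gcd(n)$.

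For the ``if'' direction I would argue as follows. Suppose $m$ and $n$ are $\bq$-linearly dependent. Write $m'=m/\gcd(m)$ and $n'=n/\gcd(n)$; then $m'=n'$ by the remark in the previous paragraph. Applying Remark~\ref{pm=pn} twice (with $d=\gcd(m)$ and $d=\gcd(n)$ respectively) gives $\fp_{m}=\fp_{m'}$ and $\fp_{n}=\fp_{n'}$, whence $\fp_{m}=\fp_{m'}=\fp_{n'}=\fp_{n}$. This is essentially the content already recorded at the end of the proof of Remark~\ref{pm=pn}, so this direction is immediate.

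For the ``only if'' direction, which is the genuinely new content here, I would use Lemma~\ref{edepn}. Assume $\fp_{m}=\fp_{n}$. The integer vector $m(J)$ attached to an ideal $J$ (when defined) depends only on $J$, since $m_{i}(J)={\rm length}(A/(x_{i}A+J))$ is an invariant of $J$. Hence $m(\fp_{m})=m(\fp_{n})$. But Lemma~\ref{edepn} computes these vectors explicitly: $m(\fp_{m})=m/\gcd(m)$ and $m(\fp_{n})=n/\gcd(n)$. Therefore $m/\gcd(m)=n/\gcd(n)$, which exhibits $m$ and $n$ as positive rational multiples of a common vector, so they are $\bq$-linearly dependent. (One should first invoke Lemma~\ref{edepn} to know that $x_{i}A+\fp_{m}$ and $x_{i}A+\fp_{n}$ are $\fm$-primary, so that the vectors $m(\fp_{m})$, $m(\fp_{n})$ are actually well-defined.)

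I do not anticipate a serious obstacle: the Corollary is a formal consequence of Lemma~\ref{edepn} together with Remark~\ref{pm=pn}, both already proved. The only point requiring a word of care is the observation, used in the ``only if'' direction, that $m(J)$ is a genuine invariant of the ideal $J$ and not merely of a presentation of it — but this is transparent from the definition via lengths of $A/(x_{i}A+J)$. The ``if'' direction is likewise a one-line reduction to Remark~\ref{pm=pn}. So the proof is short: reduce to primitive vectors, apply the remark for the easy implication, and read off the invariant $m(\fp_{n})=n/\gcd(n)$ from the lemma for the converse.
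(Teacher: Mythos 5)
Your proposal is correct and follows essentially the same route as the paper: the forward implication reads off $m(\fp_{m})=m(\fp_{n})$ and applies Lemma~\ref{edepn} to get $\gcd(n)\cdot m=\gcd(m)\cdot n$, and the converse clears denominators and invokes Remark~\ref{pm=pn}. The only cosmetic difference is that you reduce both vectors to their primitive forms before applying the Remark, whereas the paper applies it directly to the relation $rm=sn$; the content is identical.
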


\begin{proof}
If $\fp_{m}=\fp_{n}$, then $m(\fp_{m})=m(\fp_{n})$ and, by
Lemma~\ref{edepn}, ${\rm gcd}(n)\cdot m={\rm gcd}(m)\cdot
n$. Conversely, if $m$ and $n$ are linearly dependent over the field
$\bq$, then $rm=sn$ for some $r,s\in \bn$. By Remark~\ref{pm=pn},
$\fp_{m}=\fp_{n}$.
\end{proof}

Now, given an HN ideal $I$, we want to look for the ``nearest'' Herzog
ideal to $I$.

\begin{Lemma}\label{unique}
Let $I$ be an HN ideal and $m(I)\in\bn^{3}$ its associated integer
vector. Then $\fp_{m(I)}$ is the unique Herzog ideal containing
$I$. In particular, $\fp_{m(I)}$ is a minimal prime of $I$.
\end{Lemma}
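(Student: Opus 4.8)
The plan is to derive all three claims from the linear system~(\ref{m-system1}) of Example~\ref{as*} together with Remarks~\ref{edei}, \ref{unique-sol} and~\ref{pm=pn}. The preliminary observation I would record is that, for any $n=(n_{1},n_{2},n_{3})\in\bn^{3}$, one has $v_{1}\in\fp_{n}=\ker\varphi_{n}$ precisely when $\varphi_{n}(v_{1})=t^{c_{1}n_{1}}-t^{b_{2}n_{2}+a_{3}n_{3}}=0$ in $k[t]$, that is, precisely when $c_{1}n_{1}=b_{2}n_{2}+a_{3}n_{3}$; and similarly for $v_{2}$ and for $D$, using the two remaining equations of~(\ref{m-system1}). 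Hence $I=(v_{1},v_{2},D)\subseteq\fp_{n}$ if and only if $n$ solves system~(\ref{m-system1}). Applying this with $n=m(I)$: by Remark~\ref{edei} the vector $m(I)$ equals $(c_{2}c_{3}-a_{2}b_{3},c_{1}c_{3}-a_{3}b_{1},c_{1}c_{2}-a_{1}b_{2})$, which by Remark~\ref{unique-sol} is a solution of~(\ref{m-system1}); therefore $I\subseteq\fp_{m(I)}$.

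For uniqueness, suppose $\fp_{n}$ is a Herzog ideal with $I\subseteq\fp_{n}$. By the observation above $n$ solves~(\ref{m-system1}), and by Remark~\ref{unique-sol} the $\bq$-space of solutions of~(\ref{m-system1}) is one-dimensional, spanned by $m(I)$; hence $n$ and $m(I)$ are linearly dependent over $\bq$. Choosing $p,q\in\bn$ with $qn=p\,m(I)$ and applying Remark~\ref{pm=pn} twice gives $\fp_{n}=\fp_{qn}=\fp_{p\,m(I)}=\fp_{m(I)}$ (this is also precisely the Corollary stated just before the lemma). So $\fp_{m(I)}$ is the only Herzog ideal containing $I$.

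For the minimality statement: $I\supseteq(v)$ and $(v)$ has height $2$ (Example~\ref{as*}), so every prime containing $I$ has height at least $2$; on the other hand $\fp_{m(I)}$ is a prime of height $2$ (Remark~\ref{pm=pn}) containing $I$. A prime $\fq$ with $I\subseteq\fq\subsetneq\fp_{m(I)}$ would then have height at least $2$, forcing $\fp_{m(I)}$ to have height at least $3$ --- a contradiction. Hence $\fp_{m(I)}$ is a minimal prime of $I$.

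The argument is short, so I do not anticipate a genuine obstacle; the only step deserving a moment of care is the passage from ``$I\subseteq\fp_{n}$'' to the linear system~(\ref{m-system1}) --- using that a binomial $t^{p}-t^{q}$ in $k[t]$ vanishes exactly when $p=q$ --- together with the one-dimensionality of its solution space, and both are immediate from Remarks~\ref{edei} and~\ref{unique-sol}.
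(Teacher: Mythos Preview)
Your proof is correct and follows essentially the same route as the paper's: both hinge on the equivalence ``$I\subseteq\fp_{n}\Leftrightarrow n$ solves system~(\ref{m-system1})'' together with the one-dimensionality of the solution space from Remark~\ref{unique-sol}, and then compare heights to conclude minimality. The only cosmetic differences are that the paper normalises to ${\rm gcd}=1$ before arguing uniqueness (whereas you invoke Remark~\ref{pm=pn}/the Corollary directly), and the paper simply states ``both ideals have height $2$'' for minimality rather than spelling out the chain argument.
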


\begin{proof} 
Given any $m=(m_{1},m_{2},m_{3})\in\bn^{3}$,
$I=(x_{1}^{c_{1}}-x_{2}^{b_{2}}x_{3}^{a_{3}}, x_{2}^{c_{2}}-
x_{1}^{a_{1}}x_{3}^{b_{3}},x_{3}^{c_{3}}-x_{1}^{b_{1}}x_{2}^{a_{2}})
\subseteq\fp_{m}$ if and only if $m$ satisfies
system~(\ref{m-system1}) of Example~\ref{as*}, whose $\bq$-linear
subspace of solutions is generated by $m(I)\in\bn^{3}$ (see
Example~\ref{as*} and Remark~\ref{edei}). Thus $I\subseteq
\fp_{m(I)}$. Since both ideals have height 2, $\fp_{m(I)}$ is a
minimal prime of $I$.

Suppose now that $I\subseteq \fp_{m}$ and $I\subseteq \fp_{n}$, where
$m,n\in\bn^{3}$. By Remark~\ref{pm=pn} we can suppose that ${\rm
  gcd}(m)=1$ and ${\rm gcd}(n)=1$. In particular, since $I\subseteq
\fp_{m},\fp_{n}$, then $m$ and $n$ are solutions of
system~(\ref{m-system1}) in Example~\ref{as*}. So, there exist
$p,q\in\bq$, $p,q>0$, such that $m=pm(I)$ and $n=qm(I)$, i.e., $rm=sn$
for some $r,s\in\bn$ (cf. Remark~\ref{unique-sol}). Taking the
greatest common divisor, $r=s$, $m=n$ and $\fp_{m}=\fp_{n}$.
\end{proof}

The next result characterizes when an HN ideal is a prime ideal.

\begin{Theorem}\label{theorem-prime}
Let $I$ be an HN ideal and $m(I)\in\bn^{3}$ its associated integer
vector. Then the following conditions are equivalent:
\begin{itemize}
\item[$(i)$] $I$ is prime;
\item[$(ii)$] $I=\fp_{m(I)}$;
\item[$(iii)$] ${\rm gcd}(m(I))=1$.
\end{itemize}
\end{Theorem}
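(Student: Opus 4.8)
The plan is to prove the cycle of implications $(ii)\Rightarrow(i)\Rightarrow(iii)\Rightarrow(ii)$, using the machinery already assembled. The implication $(ii)\Rightarrow(i)$ is immediate, since $\fp_{m(I)}$ is prime by definition of a Herzog ideal. For $(i)\Rightarrow(iii)$ I would argue by contraposition: suppose $d={\rm gcd}(m(I))>1$ and write $m(I)=dn$ with $n\in\bn^{3}$, ${\rm gcd}(n)=1$. By Lemma~\ref{unique}, $\fp_{m(I)}=\fp_{n}$ is a minimal prime of $I$ of height $2$; and since $I$ has height $2$ and is grade-unmixed (Proposition~\ref{ifgradev2}, Example~\ref{as*}), $I$ is prime if and only if $I=\fp_{m(I)}$. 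So it suffices to exhibit an element of $\fp_{m(I)}$ not in $I$, or equivalently to show $\fp_{m(I)}\neq I$ when $d>1$; the natural invariant to compare is the multiplicity. By Remark~\ref{edei}, $m_{i}(I)=e(x_{i}\cdot A_{\fm}/I_{\fm})$, and by the associativity formula for multiplicities applied along the one-dimensional fibre, $e(x_{i}\cdot A_{\fm}/I_{\fm})=\sum_{\fp}\ell\big((A_{\fm}/I_{\fm})_{\fp}\big)\,e(x_{i}\cdot A_{\fm}/\fp A_{\fm})$, the sum running over the (height-$2$) minimal primes $\fp$ of $I$. The term coming from $\fp_{m(I)}=\fp_{n}$ contributes $e(x_{i}\cdot A_{\fm}/(\fp_{n})_{\fm})=n_{i}=m_{i}(I)/d$ by Lemma~\ref{edepn}, multiplied by $\ell\big((A/I)_{\fp_{n}}\big)\geq 1$. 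If $I$ were prime, that length would be exactly $1$ and there would be no other minimal prime, forcing $m_{i}(I)=m_{i}(I)/d$, hence $d=1$; contradiction. Thus $(i)$ forces $d=1$.

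For $(iii)\Rightarrow(ii)$, assume ${\rm gcd}(m(I))=1$. By Lemma~\ref{unique} we already have $I\subseteq\fp_{m(I)}$ with both of height $2$, so $\fp_{m(I)}$ is a minimal prime of $I$; I must rule out any \emph{other} minimal prime and show the localization of $A/I$ at $\fp_{m(I)}$ is reduced of length $1$, i.e.\ that $I_{\fp_{m(I)}}=(\fp_{m(I)})_{\fp_{m(I)}}$. Again the clean route is the multiplicity count above: with ${\rm gcd}(m(I))=1$ the $\fp_{m(I)}$-term of the associativity sum already equals $m_{i}(I)$ on the nose (length factor $\geq1$ times $e(x_{i}\cdot A_{\fm}/(\fp_{m(I)})_{\fm})=m_{i}(I)$), so every other summand must vanish and the length factor at $\fp_{m(I)}$ must be $1$. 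Vanishing of the other summands means there are no other minimal primes of $I$; length $1$ at $\fp_{m(I)}$ together with $A/I$ being Cohen--Macaulay (unmixed, height $2$) of dimension $1$ shows $A/I$ is reduced, hence $I=\fp_{m(I)}$.

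The main obstacle is making the multiplicity bookkeeping rigorous: one needs the associativity (additivity) formula for the Hilbert--Samuel multiplicity $e(x_{i}\cdot -)$ over the one-dimensional Cohen--Macaulay local ring $A_{\fm}/I_{\fm}$, expressed as a sum over minimal primes weighted by the lengths of the localizations, and one must verify that $x_{i}$ is a parameter modulo each such prime (which it is, since $x_{i}A+\fp$ is $\fm$-primary for any height-$2$ prime $\fp$ lying over a Herzog-type configuration — indeed any minimal prime of $I$ has $A/\fp$ one-dimensional, and $x_i$ is a nonzerodivisor on it by Proposition~\ref{ifgradev2}(c)). Some care is also needed to identify the residue of the length-$1$ Cohen--Macaulay quotient with a reduced ring rather than merely deduce numerical equality; this follows because a one-dimensional Cohen--Macaulay local ring with a single minimal prime of multiplicity $1$ has no embedded primes and is a domain after killing that minimal prime, and the length condition forces the nilradical to be zero. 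Modulo these standard facts about multiplicities (for which \cite{sh} is the natural reference, already cited in Remark~\ref{edei} and Lemma~\ref{edepn}), the equivalences follow.
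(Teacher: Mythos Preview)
Your argument is correct, but it takes a genuinely different route from the paper's. Both approaches get $(i)\Leftrightarrow(ii)$ directly from Lemma~\ref{unique}. For $(ii)\Leftrightarrow(iii)$, however, the paper avoids the associativity formula entirely: it considers the short exact sequence $0\to L\to A/I\to A/\fp_{m(I)}\to 0$ with $L=\fp_{m(I)}/I$, tensors with $A/x_{1}A$ (which stays left-exact since $x_{1}$ is regular on $A/\fp_{m(I)}$), and applies the graded Nakayama lemma in the weighting of Example~\ref{as*} to conclude that $L=0$ if and only if $A/(x_{1}A+I)$ and $A/(x_{1}A+\fp_{m(I)})$ have the same length, i.e.\ $m_{1}(I)=m_{1}(I)/\gcd(m(I))$. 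This is lighter than your approach: it needs only Lemma~\ref{edepn} and Nakayama, whereas you invoke the full associativity law for multiplicities (which the paper brings in only later, in Remark~\ref{af}). On the other hand, your route buys something extra: the associativity identity immediately constrains the $\fp_{m(I)}$-length of $A/I$ and the contributions of any other minimal primes, which is exactly the mechanism exploited in Section~\ref{number}. One small cleanup: in your $(iii)\Rightarrow(ii)$, the passage from ``only one minimal prime with local length~$1$'' to $I=\fp_{m(I)}$ is most cleanly seen by observing that unmixedness makes $I$ a $\fp_{m(I)}$-primary ideal, whence $I=IA_{\fp_{m(I)}}\cap A=\fp_{m(I)}A_{\fp_{m(I)}}\cap A=\fp_{m(I)}$; the detour through ``$A/I$ is reduced'' is not needed.
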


\begin{proof}
By Lemma~\ref{unique}, $\fp_{m(I)}$ is a minimal prime of $I$. Thus
$I$ is prime if and only if $I=\fp_{m(I)}$. Consider the exact
sequence
\begin{eqnarray*}
0\rightarrow L\rightarrow A/I\rightarrow A/\fp_{m(I)}\rightarrow 0,
\end{eqnarray*}
where $L=\fp_{m(I)}/I$. Tensoring it with $A/x_{1}A$ and using that
$x_{1}\not\in\fp_{m(I)}$, one obtains
\begin{eqnarray*}
0\rightarrow L/x_{1}L\rightarrow A/(x_{1}A+I)\rightarrow
A/(x_{1}A+\fp_{m(I)})\rightarrow 0.
\end{eqnarray*}
Endow $A$ with the natural grading induced by giving $x_{i}$ weight
$m_{i}(I)$, $i=1,2,3$; see Remark~\ref{edei}. Then $I$ and
$\fp_{m(I)}$ are homogeneous ideals in this grading. By the graded
variant of Nakayama's lemma, $L=0$ if and only if
$L=x_{1}L$. Therefore, $I=\fp_{m(I)}$ if and only if the Artinian
rings $A/(x_{1}A+I)$ and $A/(x_{1}A+\fp_{m(I)})$ have the same
length. But, by Lemma~\ref{edepn}, the length of
$A/(x_{1}A+\fp_{m(I)})$ is equal to $m_{1}(\fp_{m(I)})=m_{1}(I)/{\rm
  gcd}(m(I))$, where we recall that $m_{1}(I)$ is by definition the
length of $A/(x_{1}A+I)$. So the result follows.
\end{proof}

\begin{Example}\label{ex-ra-rb}
{\rm Let $I_{r}$ be the HN ideal associated to
\begin{eqnarray*}
  \cm_{r}=\left(\begin{array}{ccc}x_{1}^{ra_{1}}&x_{2}^{ra_{2}}&x_{3}^{ra_{3}}
    \\ x_{2}^{rb_{2}}&x_{3}^{rb_{3}}&x_{1}^{rb_{1}}\end{array}\right), 
\end{eqnarray*}
where $a,b\in\bn^{3}$ and $r\in\bn$. Then $m(I_{r})=r^{2}m(I_{1})$. In
particular, $I_{r}$ is not prime for all $r>1$. In fact,
$I_{r}\subseteq I_{1}$, since
$x_{1}^{rc_{1}}-x_{2}^{rb_{2}}x_{3}^{ra_{3}}=
(x_{1}^{c_{1}}-x_{2}^{b_{2}}x_{3}^{a_{3}})
(\sum_{i=1}^{r}x_{1}^{(r-i)c_{1}}x_{2}^{(i-1)b_{2}}x_{3}^{(i-1)a_{3}})$. Note
that the $I_{r}$ are all distinct and all have the same associated
Herzog ideal. }\end{Example}

\begin{Remark}{\rm 
One has the maps $m:\{ \mbox{HN ideals}\} \rightarrow \bn^{3}$ and
$\fp_{\bullet}:\bn^{3}\rightarrow \{\mbox{Herzog ideals}\}$ defined by
$I\mapsto m(I)$ and $m\mapsto \fp_{m}$.

The map $m$ is not injective. For example, as regards the HN ideal
given by the triples $(a_{1},a_{2},a_{3})=(1,1,3)$,
$(b_{1},b_{2},b_{3})=(3,2,3)$, we get $m=(15,15,10)$, which we also
get whenever $(a_{1},a_{2},a_{3})=(2,3,3)$,
$(b_{1},b_{2},b_{3})=(1,1,3)$. The second ideal contains a binomial
with a pure term in $x_{1}^{3}$, whereas the first ideal does not.
And indeed $m$ is not surjective even if we restrict the range to
triples of positive integers each of which is at least 3; for example
$(3,4,4)$ is not in the image of $m$, since if it were,
$a_{2}=a_{3}=b_{2}=b_{3}=1$, and a contradiction would follow easily.
The map $\fp_{\bullet}$ is not injective by Remark~\ref{pm=pn} and is
surjective by definition.

The composition $\fp_{\bullet}\circ m$, which assigns to each HN ideal
its associated Herzog ideal, is clearly not injective (because $m$ is
not injective; see also Example~\ref{ex-ra-rb}). 

Is $\fp_{\bullet}\circ m$ surjective? What is the image of $m$?
}\end{Remark}

\begin{Remark}{\rm 
As regards a description of ${\rm Im}(\fp_{\bullet}\circ m)$ and ${\rm
  Im}(m)$, at the moment we have only partial results giving necessary
conditions for triples of positive integers to belong to these image
sets. These results have elementary but somewhat lengthy and technical
proofs. So for the moment we confine ourselves to the following
observations.

(1) Whenever $n=(n_{1},n_{2},n_{3})\in\bn^{3}$ has ${\rm gcd}(n)=1$,
then $n\in {\rm Im}(m)$ if and only if the subsemigroup $H$ of $(\bn
,+)$ generated by $n_{1},n_{2},n_{3}$ is not symmetric. Indeed,
suppose that there exists an HN ideal $I$ with $m(I)=n$. By
Theorem~\ref{theorem-prime}, $I$ is prime and equal to $\fp_{n}$. Thus
$\fp_{n}$ is an HN ideal and so is not a complete intersection. By
\cite{herzog} (see also \cite[p.~139]{kunz}), $H$ is not
symmetric. Conversely, if $H$ is not symmetric, $\fp_{n}$ is not a
complete intersection. Thus $\fp_{n}$ is an HN ideal and, by
Remark~\ref{edepn}, $m(\fp_{n})=n$, so $n\in {\rm Im}(m)$.

(2) If $(m_{1},m_{2},m_{3})$ is in ${\rm Im}(m)$ and $m_{1}$ and
$m_{2}$ are bounded above by $r$, say, then it is easy to see that
$m_{3}$ is bounded above by $3r^{2} - 2$. On the other hand, one can
show that if $n=(n_{1},n_{2},n_{3})\in\bn^{3}$ is such that ${\rm
  gcd}(n_{2},n_{3})= 1$, so that in particular ${\rm
  gcd}(n_{1},n_{2},n_{3}) = 1$, and that $n_{1}$ is contained in $\bn
n_{2}+\bn n_{3}$, then $\fp_{n}$ lies in ${\rm Im}(\fp_{\bullet}\circ
m)$. }\end{Remark}

\section{On the number of primary components of an HN ideal
}\label{number}

We keep the notations of the former section, i.e.,
$A=k[x_{1},x_{2},x_{3}]$ is the polynomial ring in three variables
$x=x_{1},x_{2},x_{3}$ over a field $k$ and $\fm$ is the maximal ideal
of $A$ generated by $x_{1},x_{2},x_{3}$. The purpose of this section
is to give bounds for the number of associated (i.e., minimal) primes
of an HN ideal of $A$. We begin with the following observation.

\begin{Remark}\label{af}{\rm 
Let $I$ be an HN ideal and $m(I)$ its associated integer vector. Then,
for each $i=1,2,3$,
\begin{eqnarray*}
m_{i}(I)=\sum_{\fq\in\mbox{Min}(A/I)} e(x_{i}\cdot
A_{\mathfrak{m}}/\fq_{\mathfrak{m}})\mbox{length}((A/I)_{\fq}).
\end{eqnarray*}
In particular, $\mbox{cardinal}\, {\rm Min}(A/I)\leq {\rm min}\{
m_{1}(I),m_{2}(I),m_{3}(I)\}$.  }\end{Remark}

\begin{proof}
By Remark~\ref{edei}, $m_{1}(I)=e(x_{1}\cdot
A_{\mathfrak{m}}/I_{\mathfrak{m}})$ and, by the Associativity Formula
(see \cite[Theorem~11.2.4]{sh}), this can be calculated as
\begin{eqnarray*}
e(x_{1}\cdot A_{\mathfrak{m}}/I_{\mathfrak{m}})=
\sum_{\fa\in\mbox{Min}(A_{\mathfrak{m}}/I_{\mathfrak{m}})}
e(x_{1}\cdot A_{\mathfrak{m}}/I_{\mathfrak{m}}; A_{\mathfrak{m}}/\fa)
\mbox{length}((A_{\mathfrak{m}}/I_{\mathfrak{m}})_{\fa}).
\end{eqnarray*}
Endowing $A$ with the grading obtained by giving $x_{i}$ weight
$m_{i}(I)$, $i=1,2,3$, $I$ is then homogeneous in this grading and
hence any associated prime $\fq$ of $I$ sits inside $\fm$. Thus any
$\fa$ in ${\rm Min}(A_{\mathfrak{m}}/I_{\mathfrak{m}})$ is of the form
$\fa=\fq A_{\mathfrak{m}}$ with $\fq$ in ${\rm Min}(A/I)$ and vice
versa. Hence the equality follows. In particular, the sum has as many
non-zero terms as $I$ has minimal primes.
\end{proof}

To improve on this observation, we need the following lemma which was
inspired by, and in turn generalizes, \cite[Lemma~10.15]{fossum}.

\begin{Lemma}\label{mainlemma}
Let $m_{2},m_{3}\in\bn$ with ${\rm gcd}(m_{2},m_{3})=e$. Let
$m_{2}=ep_{2}$ and $m_{3}=ep_{3}$ with ${\rm gcd}(p_{2},p_{3})=1$. Let
$f$ be a factor of $x_{2}^{m_{3}}-x_{3}^{m_{2}}$ which is not a
unit. Then $f$ is of the form
\begin{eqnarray*}
a_{rp_{3}}x_{2}^{rp_{3}}+a_{(r-1)p_{3}}x_{2}^{(r-1)p_{3}}x_{3}^{p_{2}}+\ldots
+a_{p_{3}}x_{2}^{p_{3}}x_{3}^{(r-1)p_{2}}+a_{0}x_{3}^{rp_{2}},
\end{eqnarray*}
with $r\in\bn$ and $a_{i}\in k$, $a_{rp_{3}},a_{0}\neq 0$.
\end{Lemma}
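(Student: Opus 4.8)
The plan is to exploit the fact that $x_{2}^{m_{3}}-x_{3}^{m_{2}}$ is a binomial, hence homogeneous with respect to a well-chosen grading on $k[x_{2},x_{3}]$, and that in a graded domain every factor of a homogeneous element is again homogeneous.

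First I would grade $k[x_{2},x_{3}]$ by assigning $x_{2}$ the weight $p_{2}$ and $x_{3}$ the weight $p_{3}$. Since $m_{2}=ep_{2}$ and $m_{3}=ep_{3}$, the monomials $x_{2}^{m_{3}}=x_{2}^{ep_{3}}$ and $x_{3}^{m_{2}}=x_{3}^{ep_{2}}$ both have weight $ep_{2}p_{3}$, so $x_{2}^{m_{3}}-x_{3}^{m_{2}}$ is homogeneous of weight $ep_{2}p_{3}$. A routine argument, comparing the lowest and highest homogeneous components of a product in the integral domain $k[x_{2},x_{3}]$, shows that every factor of a homogeneous element is homogeneous. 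Applying this to $f$, which is a non-unit and hence a polynomial of positive ordinary degree, we get that $f$ is homogeneous of some weight $d$ with $1\le d\le ep_{2}p_{3}$, so $f=\sum a_{i,j}x_{2}^{i}x_{3}^{j}$ where the sum runs over the pairs $(i,j)$ with $i,j\ge 0$ and $ip_{2}+jp_{3}=d$.

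Next I would use $\gcd(p_{2},p_{3})=1$ to analyse this support. The integer solutions of $ip_{2}+jp_{3}=d$ form a single coset of $\bz(p_{3},-p_{2})$, so among them there is at most one with $j=0$ (namely $i=d/p_{2}$, which occurs only if $p_{2}\mid d$) and at most one with $i=0$ (namely $j=d/p_{3}$, occurring only if $p_{3}\mid d$). On the other hand $x_{2}^{m_{3}}-x_{3}^{m_{2}}$ is divisible neither by $x_{2}$ (set $x_{2}=0$) nor by $x_{3}$ (set $x_{3}=0$), hence neither is $f$; equivalently $f|_{x_{3}=0}\ne 0$ and $f|_{x_{2}=0}\ne 0$. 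This forces $p_{2}\mid d$ with $a_{d/p_{2},0}\ne 0$ and $p_{3}\mid d$ with $a_{0,d/p_{3}}\ne 0$. Since $\gcd(p_{2},p_{3})=1$ it follows that $p_{2}p_{3}\mid d$, say $d=rp_{2}p_{3}$ with $r\ge 1$; then the nonnegative solutions of $ip_{2}+jp_{3}=rp_{2}p_{3}$ are exactly $(i,j)=(kp_{3},(r-k)p_{2})$ for $k=0,1,\ldots,r$. Substituting these back and writing $a_{kp_{3}}$ for the coefficient of $x_{2}^{kp_{3}}x_{3}^{(r-k)p_{2}}$ gives precisely the stated expression, with $a_{rp_{3}}\ne 0$ (as $x_{3}\nmid f$) and $a_{0}\ne 0$ (as $x_{2}\nmid f$).

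I do not anticipate a genuine obstacle: the argument is essentially bookkeeping with the weight grading. The two points needing a little care are the standard fact that factors of homogeneous elements of the domain $k[x_{2},x_{3}]$ are homogeneous (a short argument on extreme components), and tracking how $\gcd(p_{2},p_{3})=1$ turns ``$f$ is divisible by neither variable'' into the presence of the pure powers $x_{2}^{rp_{3}}$ and $x_{3}^{rp_{2}}$ at the two ends of $f$.
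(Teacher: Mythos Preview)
Your proposal is correct and follows essentially the same approach as the paper's proof: both impose the weighted grading $\deg x_{2}=p_{2}$, $\deg x_{3}=p_{3}$, note that factors of a homogeneous element in the domain $k[x_{2},x_{3}]$ are homogeneous, and then use $\gcd(p_{2},p_{3})=1$ together with the presence of pure $x_{2}$- and $x_{3}$-terms to force the weight to be $rp_{2}p_{3}$ and to pin down the support. Your write-up is in fact slightly more explicit than the paper's in justifying why $f$ is divisible by neither variable (hence has both pure terms), and it cleanly avoids the bounding step the paper carries out on the exponents; otherwise the two arguments coincide.
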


\begin{proof} 
Set the weight of $x_{i}$ equal to $p_{i}$, $i=2,3$, so that
$x_{2}^{m_{3}}-x_{3}^{m_{2}}$ is homogeneous of degree
$ep_{2}p_{3}$. Suppose that $f$ is a factor of
$x_{2}^{m_{3}}-x_{3}^{m_{2}}\in k[x_{2},x_{3}]$ which is not a
unit. Then $f$ is homogeneous of degree $p$, say, where $p>0$. Write
\begin{eqnarray*}
f=a_{t}x_{2}^{t}+a_{t-1}x_{2}^{t-1}x_{3}^{l_{t-1}}+\ldots
+a_{0}x_{3}^{l_{0}},
\end{eqnarray*}
$a_{t},a_{0}\neq 0$, with typical term $a_{i}x_{2}^{i}x_{3}^{l_{i}}$,
for $0\leq i\leq t$. So $ip_{2}+l_{i}p_{3}=p=tp_{2}$. Because ${\rm
  gcd}(p_{2},p_{3})=1$, $p_{2}$ divides $l_{i}$ and $p_{3}$ divides
$t-i$. Since $tp_{2}=p<p+q=(e-1)p_{2}p_{3}$, then $0\leq i\leq
t<(e-1)p_{3}$. So $0\leq t-i<(e-1)p_{3}$. Thus $t-i=jp_{3}$, and hence
$i=t-jp_{3}$, with $j\in\bz$, $0\leq j\leq e-2$. Therefore
\begin{eqnarray*}
f=a_{t}x_{2}^{t}+a_{t-p_{3}}x_{2}^{t-p_{3}}x_{3}^{l_{t-p_{3}}}+
a_{t-2p_{3}}x_{2}^{t-2p_{3}}x_{3}^{l_{t-2p_{3}}}+\ldots +
a_{t-(e-2)p_{3}}x_{2}^{t-(e-2)p_{3}}x_{3}^{l_{t-(e-2)p_{3}}}.
\end{eqnarray*}
Since $f$ has a pure term in $x_{3}$, then $t=rp_{3}$ for some
$r\in\bn$, $1\leq r\leq e-2$, and
\begin{eqnarray*}
f=a_{rp_{3}}x_{2}^{rp_{3}}+
a_{(r-1)p_{3}}x_{2}^{(r-1)p_{3}}x_{3}^{l_{(r-1)p_{3}}}+\ldots
+a_{p_{3}}x_{2}^{p_{3}}x_{3}^{l_{p_{3}}}+a_{0}x_{3}^{l_{0}}.
\end{eqnarray*}
For $0\leq j\leq r$, the term with coefficient $a_{(r-j)p_{3}}$ has
degree $(r-j)p_{3}p_{2}+l_{(r-j)p_{3}}p_{3}$, which must be equal to
$rp_{2}p_{3}$, the degree of $f$. Thus
$l_{(r-j)p_{3}}=jp_{2}$. Therefore
\begin{eqnarray*}
f=a_{rp_{3}}x_{2}^{rp_{3}}+
a_{(r-1)p_{3}}x_{2}^{(r-1)p_{3}}x_{3}^{p_{2}}+\ldots +
a_{p_{3}}x_{2}^{p_{3}}x_{3}^{(r-1)p_{2}}+a_{0}x_{3}^{rp_{2}},
\end{eqnarray*}
as desired.
\end{proof}

\begin{Theorem}\label{bound}
Let $I$ be an HN ideal and $m(I)$ its associated integer vector. Let
$\fq\in {\rm Min}(A/I)$. Then, for all $i,j\in \{1,2,3\}$, $i\neq j$,
\begin{eqnarray*}
e(x_{i};A_{\mathfrak{m}}/\fq_{\mathfrak{m}})\geq m_{i}(I)/{\rm
  gcd}(m_{i}(I),m_{j}(I)).
\end{eqnarray*}
In particular, setting $d={\rm gcd}(m(I))$ and $s={\rm min}\{ {\rm
  gcd}(m_{i}(I),m_{j}(I))\mid 1\leq i<j\leq 3\}$,
\begin{eqnarray*}
{\rm cardinal}\, {\rm Min}(A/I)\leq 1+(s\cdot (d-1)/d).
\end{eqnarray*}
\end{Theorem}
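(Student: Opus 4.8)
The proof splits naturally into two parts. First I would establish the per-component inequality
\[
e(x_{i};A_{\mathfrak{m}}/\fq_{\mathfrak{m}})\geq m_{i}(I)/{\rm gcd}(m_{i}(I),m_{j}(I))
\]
for a fixed minimal prime $\fq$ and fixed $i\neq j$; then I would feed this into the Associativity Formula of Remark~\ref{af} to extract the bound on the number of minimal primes. For the first part, fix say $i=2$, $j=3$ (the general case being symmetric). The key point is that $\fq$ contains $I$, hence contains the binomial $v_{1}=x_{1}^{c_{1}}-x_{2}^{b_{2}}x_{3}^{a_{3}}$ and also $D$; more to the point, modulo $x_{1}$ the ideal $I$ acquires the monomial $x_{2}^{b_{2}}x_{3}^{a_{3}}$, but what I really want is a genuine binomial in $x_{2},x_{3}$ alone lying in $I$. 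Such a binomial is $x_{2}^{m_{3}(I)}-x_{3}^{m_{2}(I)}$ (up to scalar): indeed $\fp_{m(I)}\supseteq I$ by Lemma~\ref{unique}, and $\fp_{m(I)}$, being the defining ideal of the monomial curve with exponents $m(I)$, contains $x_{2}^{m_{3}(I)/e}-x_{3}^{m_{2}(I)/e}$ where $e={\rm gcd}(m_{2}(I),m_{3}(I))$ — wait, this lies in $\fp_{m(I)}$ but not obviously in $I$. So instead I would argue directly: since $m(I)$ solves system~(\ref{m-system1}), one checks (as in the proof of Example~\ref{as*}) that $A$ is graded with ${\rm wt}(x_{i})=m_{i}(I)$ and $I$ homogeneous; then $\fq$ is homogeneous, and $A/(x_{1}A+\fq)$ is a graded Artinian quotient of $k[x_{2},x_{3}]$ by a homogeneous ideal containing the image of $I$.

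**Main computation.** Here is where Lemma~\ref{mainlemma} enters. Modulo $\fq$, since $x_{1}\notin\fq$ (Proposition~\ref{ifgradev2}$(c)$, as $x$ is a regular sequence — note $A$ is a polynomial ring and $v_{1},v_{2}$ a regular sequence by Example~\ref{as*}, so ${\rm grade}(v)=2$ applies), we can localize and invert $x_{1}$; then the relations $v_{1}=v_{2}=D=0$ let us eliminate $x_{1}$ and deduce that $x_{2}$ and $x_{3}$ satisfy the binomial relation $x_{2}^{m_{3}(I)}-c\,x_{3}^{m_{2}(I)}=0$ in $A_{x_1}/\fq A_{x_1}$ for some nonzero constant $c$ (this is the standard elimination on the $2\times 2$ determinantal relations, exactly the computation underlying Remark~\ref{edei}). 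Scaling $x_3$ we may take $c=1$, so $\fq$ contains (the numerator of) $x_{2}^{m_{3}(I)}-x_{3}^{m_{2}(I)}$. Now $\fq\cap k[x_{2},x_{3}]$ is a height-one prime of $k[x_{2},x_{3}]$ containing this binomial, hence is generated by an irreducible factor $f$ of it; by Lemma~\ref{mainlemma}, with $m_{2}(I)=ep_{2}$, $m_{3}(I)=ep_{3}$, $\gcd(p_2,p_3)=1$, this $f$ has the form $\sum a_{(r-j)p_3}x_2^{(r-j)p_3}x_3^{jp_2}$ with $a_{rp_3},a_0\neq 0$ and $r\geq 1$. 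The crucial consequence is that $f$ contains a pure power $x_{3}^{rp_2}$ with $rp_2\geq p_2 = m_{2}(I)/e = m_{2}(I)/\gcd(m_2(I),m_3(I))$. Therefore in $A_{\mathfrak m}/\fq_{\mathfrak m}$, the element $x_3$ is integral over $k$ of degree $\geq p_2$; equivalently $\dim_k (A_{\mathfrak m}/(x_3 A+\fq)_{\mathfrak m})\geq p_2$. Since $A_{\mathfrak m}/\fq_{\mathfrak m}$ is a one-dimensional Cohen–Macaulay local ring and $x_3$ a parameter, \cite[Proposition~11.1.10]{sh} gives $e(x_3;A_{\mathfrak m}/\fq_{\mathfrak m})=\mathrm{length}\bigl(A_{\mathfrak m}/(x_3A+\fq)_{\mathfrak m}\bigr)\geq p_2 = m_{2}(I)/\gcd(m_{2}(I),m_{3}(I))$, which (after relabelling) is the asserted inequality. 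The main obstacle I anticipate is being careful with the elimination step — precisely extracting the binomial $x_{2}^{m_{3}(I)}-x_{3}^{m_{2}(I)}$ living in $\fq$ (not merely in $\fp_{m(I)}$) and handling the nonzero scalar — together with checking the degree bound $rp_2 \ge p_2$, i.e. that $f$ is genuinely non-constant.

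**Deriving the cardinality bound.** With the inequality in hand, fix the pair $i_{0}<j_{0}$ achieving $s=\gcd(m_{i_0}(I),m_{j_0}(I))$, and apply Remark~\ref{af} with $i=i_0$:
\[
m_{i_0}(I)=\sum_{\fq\in\mathrm{Min}(A/I)} e(x_{i_0};A_{\mathfrak m}/\fq_{\mathfrak m})\,\mathrm{length}\bigl((A/I)_{\fq}\bigr)
\ \geq\ \sum_{\fq} \frac{m_{i_0}(I)}{s},
\]
since each summand has length $\ge 1$ and each $e(x_{i_0};\cdots)\ge m_{i_0}(I)/s$ by the inequality just proved. Among the minimal primes, exactly one is $\fp_{m(I)}$, and for that one, $A/\fp_{m(I)}$ being a monomial-curve ring, $e(x_{i_0};A_{\mathfrak m}/(\fp_{m(I)})_{\mathfrak m})=m_{i_0}(I)/d$ by Lemma~\ref{edepn} (with $d=\gcd(m(I))$); separating this term off gives
\[
m_{i_0}(I)\ \geq\ \frac{m_{i_0}(I)}{d}\ +\ \bigl(\mathrm{cardinal}\,\mathrm{Min}(A/I)-1\bigr)\cdot\frac{m_{i_0}(I)}{s}.
\]
Dividing by $m_{i_0}(I)>0$ and rearranging yields $\mathrm{cardinal}\,\mathrm{Min}(A/I)-1\le s(1-1/d)=s(d-1)/d$, i.e. $\mathrm{cardinal}\,\mathrm{Min}(A/I)\le 1+s(d-1)/d$, as claimed. (Here I use $d\mid s$ is not needed; only $m_{i_0}(I)/d$ and $m_{i_0}(I)/s$ being the relevant multiplicities.) This last step is routine arithmetic once the per-component inequality and the identification of the $\fp_{m(I)}$-term are secured.
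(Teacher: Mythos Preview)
Your overall architecture matches the paper's: establish the per-prime multiplicity lower bound, then feed it into the Associativity Formula (Remark~\ref{af}) together with the exact value $e(x_i;A_{\fm}/(\fp_{m(I)})_{\fm})=m_i(I)/d$ from Lemma~\ref{edepn}; the final arithmetic is correct. There are, however, two genuine problems in the ``Main computation''.

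First, the extraction of the binomial. Inverting $x_1$ and eliminating leaves you with an undetermined constant $c$, which you then ``scale away'' by rescaling $x_3$; but rescaling $x_3$ changes both $I$ and $\fq$, so this move is illegitimate. The paper avoids the issue by computing \emph{directly in $A/I$}: using the identities $m_2=a_1c_3+b_1b_3$ and $m_3=a_1a_2+b_1c_2$ (which follow from the definition of $m(I)$), one checks that in $A/I$
\[
x_2^{m_3}=x_2^{a_1a_2}(x_2^{c_2})^{b_1}=x_1^{a_1b_1}x_2^{a_1a_2}x_3^{b_1b_3}
=(x_3^{c_3})^{a_1}x_3^{b_1b_3}=x_3^{m_2},
\]
so $x_2^{m_3}-x_3^{m_2}\in I\cap k[x_2,x_3]\subseteq\fq\cap k[x_2,x_3]$ with no scalar at all.

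Second, and more seriously, the passage from ``$f$ has a pure term $a_0x_3^{rp_2}$'' to a lower bound on a multiplicity is both index-confused and unjustified. The theorem asserts $e(x_i;\,\cdot\,)\geq m_i/\gcd(m_i,m_j)$; with your notation that is $e(x_2;\,\cdot\,)\geq p_2$ or $e(x_3;\,\cdot\,)\geq p_3$, never $e(x_3;\,\cdot\,)\geq p_2$, and no relabelling interchanges these. Moreover, the phrase ``$x_3$ is integral over $k$ of degree $\geq p_2$'' has no clear meaning (in the domain $A/\fq$ the element $x_3$ is transcendental over $k$), and knowing only that $f\in\fq$ produces \emph{upper} bounds on the lengths of $A/(x_iA+\fq)$ (e.g.\ modulo $x_2$ one gets $x_3^{rp_2}=0$), not the lower bound you need. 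The paper supplies the missing idea: set $B=k[x_2,x_3]$ and $\fn=(x_2,x_3)$; then $B/(\fq\cap B)\hookrightarrow A/\fq$ is a finite extension of one-dimensional domains with $(A/\fq)_{\fn}=(A/\fq)_{\fm}$, and multiplicativity of multiplicities (\cite[Corollary~11.2.6]{sh}) gives
\[
e(x_2;A_{\fm}/\fq_{\fm})\;\geq\; e\bigl(x_2;B_{\fn}/(\fq\cap B)_{\fn}\bigr)
\;=\;\mathrm{length}\bigl(B/(x_2B+(f))\bigr)\;=\;rp_2\;\geq\;p_2,
\]
the penultimate equality because $f$ reduces to $a_0x_3^{rp_2}$ modulo $x_2$. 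This reduction to $B$ is the step your argument lacks.
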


\begin{proof}
Take $\fq$ a minimal prime of $I$ in $A$, and set $B=k[x_{2},x_{3}]$
and $\fn=(x_{2},x_{3})$ the maximal ideal of $B$ generated by
$x_{2},x_{3}$. Clearly $\fq\cap B\subset \fn$. Note that $A/\fq$ is a
finite $(B/\fq\cap B)$-module. In particular $(A/\fq)_{\mathfrak{n}}$
is a finite $(B/\fq\cap B)_{\mathfrak{n}}$-module. But $\fm$ is the
unique maximal (indeed prime) ideal $\fp\supseteq \fq$ in $A$ such
that $\fp/\fq \cap (B/\fq\cap B)=\fn/\fq\cap B$ (since in $A/\fq$,
$x_{1}^{c_{1}}=x_{2}^{b_{2}}x_{3}^{a_{3}}$), so that
$(A/\fq)_{\mathfrak{n}}=(A/\fq)_{\mathfrak{m}}$. Therefore,
$(A/\fq)_{\mathfrak{m}}$ is a finite $(B/\fq\cap
B)_{\mathfrak{n}}$-module.

Moreover, in $A/I$, $x_{2}^{c_{2}}=x_{1}^{a_{1}}x_{3}^{b_{3}}$ and
$x_{3}^{c_{3}}=x_{1}^{b_{1}}x_{2}^{a_{2}}$. Write
$m(I)=(m_{1},m_{2},m_{3})$. Since $m_{2}=a_{1}c_{3}+b_{1}b_{3}$ and
$m_{3}=a_{1}a_{2}+b_{1}c_{2}$, then
$x_{2}^{m_{3}}=x_{2}^{a_{1}a_{2}}x_{2}^{b_{1}c_{2}}=
x_{1}^{a_{1}b_{1}}x_{2}^{a_{1}a_{2}}x_{3}^{b_{1}b_{3}}$ and
$x_{3}^{m_{2}}=x_{3}^{a_{1}c_{3}}x_{3}^{b_{1}b_{3}}=
x_{1}^{a_{1}b_{1}}x_{2}^{a_{1}a_{2}}x_{3}^{b_{1}b_{3}}$. Thus
$x_{2}^{m_{3}}=x_{3}^{m_{2}}$ in $A/I$ and
$x_{2}^{m_{3}}-x_{3}^{m_{2}}\in I$. In particular,
$x_{2}^{m_{3}}-x_{3}^{m_{2}}\in I\cap B\subseteq \fq\cap B$. Thus
$(x_{2},x_{3}^{m_{2}})\subseteq x_{2}B+(\fq\cap B)$ and $x_{2}\cdot
B_{\mathfrak{n}}/(\fq\cap B)_{\mathfrak{n}}$ is
$\mathfrak{n}B_{\mathfrak{n}}/(\fq\cap B)_{\mathfrak{n}}$-primary.
Therefore, by \cite[Corollary~11.2.6]{sh},
\begin{eqnarray*}
e(x_{2};A_{\mathfrak{m}}/\fq_{\mathfrak{m}})=e(x_{2};B_{\mathfrak{n}}/(\fq\cap
B)_{\mathfrak{n}})\cdot {\rm rank}_{B_{\mathfrak{n}}/(\fq\cap
  B)_{\mathfrak{n}}}(A_{\mathfrak{m}}/\fq_{\mathfrak{m}})\geq
e(x_{2};B_{\mathfrak{n}}/(\fq\cap B)_{\mathfrak{n}}).
\end{eqnarray*}
Since $B/\fq\cap B\hookrightarrow A/\fq$ is an integral extension,
then $\fq\cap B$ is a prime ideal of height 1 of $B$ (thus principal)
and $B_{\mathfrak{n}}/(\fq\cap B)_{\mathfrak{n}}$ is a one-dimensional
Noetherian local domain, thus Cohen-Macaulay. By
\cite[Proposition~11.1.10]{sh}, $e(x_{2};B_{\mathfrak{n}}/(\fq\cap
B)_{\mathfrak{n}})={\rm length}
(B_{\mathfrak{n}}/x_{2}B_{\mathfrak{n}}+(\fq\cap B)_{\mathfrak{n}})$.
Since $x_{2}B+\fq\cap B$ is $\fn$-primary, the latter length is equal
to ${\rm length} (B/x_{2}B+\fq\cap B)$.

But $\fq\cap B=(f)$ for some irreducible polynomial $f\in B$ and
$x_{2}^{m_{3}}-x_{3}^{m_{2}}\in \fq\cap B=(f)$. Thus
$x_{2}^{m_{3}}-x_{3}^{m_{2}}=fg$ for some $g\in B$. By
Lemma~\ref{mainlemma}, and following its notations, $f$ is a
polynomial in $x_{2}^{p_{3}}$ and $x_{3}^{p_{2}}$ with pure non-zero
terms in each of $x_{2}^{p_{3}}$ and $x_{3}^{p_{2}}$, where ${\rm
  gcd}(m_{2},m_{3})=e$ and $m_{2}=ep_{2}$ and $m_{3}=ep_{3}$, with
${\rm gcd}(p_{2},p_{3})=1$. Thus $\fq\cap B\subseteq
(x_{2},x_{3}^{p_{2}})$. Hence
$e(x_{2};A_{\mathfrak{m}}/\fq_{\mathfrak{m}}) \geq {\rm
  length}(B/x_{2}B+\fq\cap B)\geq {\rm
  length}(B/(x_{2},x_{3}^{p_{2}}))=p_{2}=m_{2}/e$.

By Remark~\ref{af} and Lemma~\ref{edepn},
\begin{eqnarray*}
m_{2}&&\geq e(x_{2};A_{\mathfrak{m}}/(\fp_{m(I)})_{\mathfrak{m}})+
\sum_{\mathfrak{q}\in\mbox{Min}(A/I)\setminus \{ \mathfrak{p}_{m(I)}
  \}} e(x_{2};A_{\mathfrak{m}}/\fq_{\mathfrak{m}}) \\ &&\geq
(m_{2}/d)+(m_{2}/e)\cdot ({\rm cardinal}\, {\rm Min}(A/I)-1).
\end{eqnarray*}
So ${\rm cardinal}\, {\rm Min}(A/I)\leq 1+(e\cdot (d-1)/d)$.
\end{proof}

\begin{Example}{\rm 
Let $I=(x_{1}^{15}-x_{2}^{8}x_{3}^{3}, x_{2}^{10}-x_{1}^{5}x_{3}^{6},
x_{3}^{9}-x_{1}^{10}x_{2}^{2})$ be the HN ideal associated to
\begin{eqnarray*}
\cm_{r}=\left(\begin{array}{ccc}x_{1}^{5}&x_{2}^{2}&x_{3}^{3}
\\ x_{2}^{8}&x_{3}^{6}&x_{1}^{10}\end{array}\right). 
\end{eqnarray*}
Then $m(I)=(78,105,110)$, whose greatest common divisor is 1, thus
$I=\fp_{m(I)}$ is prime by Theorem~\ref{theorem-prime}. Note that for
HN ideals which are prime, the bound given in Theorem~\ref{bound} is
precisely 1. However it may happen that $s={\rm min}\{ {\rm
  gcd}(m_{i}(I),m_{j}(I))\mid 1\leq i<j\leq 3\}\neq 1$. For instance,
in this case, $s=2$.}\end{Example}

\begin{Example}\label{char2}{\rm
Let $I=(x_{1}^{5}-x_{2}^{2}x_{3},x_{2}^{4}-x_{1}x_{3}^{3},
x_{3}^{4}-x_{1}^{4}x_{2}^{2})$ be the HN ideal associated to
\begin{eqnarray*}
\cm=\left(\begin{array}{ccc}x_{1}^{}&x_{2}^{2}&x_{3}^{}
  \\ x_{2}^{2}&x_{3}^{3}&x_{1}^{4}\end{array}\right).
\end{eqnarray*}
Then $m(I)=(10,16,18)$, whose greatest common divisor $d$ equals 2.
Therefore, by Theorem~\ref{theorem-prime}, $I$ is not prime. Moreover,
${\rm min}\{ {\rm gcd}(m_{i}(I),m_{j}(I))\mid 1\leq i<j\leq
3\}=2$. Thus, by Theorem~\ref{bound}, $I$ has at most two minimal
primes. By Lemma~\ref{unique}, the Herzog ideal $\fp_{n}$ associated
to $n=m(I)/d=(5,8,9)$ is a minimal prime of $I$. To calculate
$\fp_{n}$, we use \cite[pages~137-139]{kunz}: since $5$ is the least
integer number $c_{1}$ such that $c_{1}n_{1}\in\bn n_{2}+\bn n_{3}$
($5\cdot 5=2\cdot 8+1\cdot 9$), $3$ is the least integer number
$c_{2}$ such that $c_{2}n_{2}\in\bn n_{1}+\bn n_{3}$ ($3\cdot 8=3\cdot
5+1\cdot 9$) and $2$ is the least integer number $c_{3}$ such that
$c_{3}n_{3}\in\bn n_{1}+\bn n_{2}$ ($2\cdot 9=2\cdot 5+1\cdot 8$),
then
$\fp_{n}=(x_{1}^{5}-x_{2}^{2}x_{3}^{},x_{2}^{3}-x_{1}^{3}x_{3}^{},
x_{3}^{2}-x_{1}^{2}x_{2}^{})$. Observe that $\fp_{n}$ is the HN ideal
associated to the matrix
\begin{eqnarray*}
\cm_{1}=\left(\begin{array}{ccc}x_{1}^{3}&x_{2}^{}&x_{3}^{}
\\ x_{2}^{2}&x_{3}^{}&x_{1}^{2}\end{array}\right).
\end{eqnarray*}
Moreover, if ${\rm char}(k)\neq 2$, the $k$-algebra automorphism
$\psi:A\rightarrow A$ defined by $\psi(x_{1})=x_{1}$,
$\psi(x_{2})=-x_{2}$ and $\psi(x_{3})=x_{3}$, leaves $I$ invariant
whereas it takes $\fp_{n}$ to the prime ideal
$\fq=(x_{1}^{5}-x_{2}^{2}x_{3}^{},x_{2}^{3}+x_{1}^{3}x_{3}^{},
x_{3}^{2}+x_{1}^{2}x_{2}^{})$. In other words, $I=\psi(I)\subset \psi
(\fp_{n})=\fq$, and $\fq$ is also a minimal prime of $I$. Thus the
bound in Theorem~\ref{bound} is attained.  In fact,
$e(x_{1};A_{\mathfrak{m}}/\fa_{\mathfrak{m}})\geq m_{1}(I)/{\rm
  gcd}(m_{1}(I),m_{2}(I))=5$ for any minimal prime $\fa$ of $I$. Thus,
by Remark~\ref{af}, ${\rm length}((A/I)_{\fa})=1$ for each such $\fa$,
and $I=\fp_{n}\cap \fq$ is radical. We will see in the next section
that this fact holds more generally.

Note that if ${\rm char}(k)=2$, then $ (x_{2}^{3}-x_{1}^{3}x_{3})^{2}=
-x_{1}x_{3}^{2}(x_{1}^{5}-x_{2}^{2}x_{3})+
x_{2}^{2}(x_{2}^{4}-x_{1}x_{3}^{3})$ and
$(x_{3}^{2}-x_{1}^{2}x_{2})^{2}=x_{3}^{4}-x_{1}^{4}x_{2}^{2}$.
Therefore $\fp_{n}^{2}\subseteq I\subsetneq \fp_{n}$ and ${\rm
  rad}(I)=\fp_{n}$. In particular $I$ has only one minimal prime and
is not radical (because it is not prime). Note that
$\fp_{n}^{2}\subsetneq I$. }\end{Example}

\begin{Remark}{\rm 
Let $I$ be an HN ideal and $m(I)=m=(m_{1},m_{2},m_{3})\in\bn^{3}$ its
associated integer vector. Let $d={\rm gcd}(m)$ and
$n=m/d=(n_{1},n_{2},n_{3})\in\bn^{3}$, where ${\rm gcd}(n)=1$. Then
${\rm gcd}(m_{i},m_{j})=d\cdot {\rm gcd}(n_{i},n_{j})$ for all $1\leq
i<j\leq 3$. Thus
\begin{eqnarray*}
s={\rm min}\{ {\rm gcd}(m_{i},m_{j})\mid 1\leq i<j\leq 3\}=d\cdot {\rm
  min}\{ {\rm gcd}(n_{i},n_{j})\mid 1\leq i<j\leq 3\}=d\cdot r,
\end{eqnarray*}
say. Therefore $1+(s(d-1)/d)=1+r(d-1)$. In particular, if some ${\rm
  gcd}(n_{i},n_{j})=1$, then ${\rm cardinal}\, {\rm Min}(A/I)\leq
d$. Does this bound hold in complete generality?  
}\end{Remark}

\begin{Remark}{\rm 
We have established two other estimates for the number of minimal
primes in $A/I$.  In a number of cases that we have looked at, these
estimates are weaker than the one given in the statement of
Theorem~\ref{bound}.  However, the methods used to obtain them are of
interest and the estimates themselves may prove to be of worth in
other situations.  So we confine ourselves to sketching some brief
details concerning them.

\noindent $(1)$ By the comment right at the end of Section~\ref{licci},
the number of minimal primes in $A/I$ is one less than the number of
minimal primes in $A/(v)$.  Set $n = m(I)/{\rm gcd}(m(I))$.  We can
then use the argument of the proof of Remark~\ref{af}, only this time
applied to $e(x_{i}\cdot A_{\mathfrak{m}}/(v)A_{\mathfrak{m}})$, $i =
1,2,3$, to get the following estimate:
\begin{eqnarray*}
{\rm cardinal}\, {\rm Min}(A/I)\leq {\rm
  min}\{c_{i}c_{j}-n_{k}\mid\{i,j,k\}= \{1,2,3\}\}.
\end{eqnarray*}
Here we have used symmetry and the fact that $e(x_{i}\cdot
A_{\mathfrak{m}}/(\fp_{n})_{\mathfrak{m}})=n_{i}$ (see
Lemma~\ref{edepn}).

\noindent $(2)$ A more delicate argument using minimal reductions and
the criterion of multiplicity 1 establishes the following result.
Suppose, possibly after relabelling the suffices, that $a_{3}\geq
c_{1}$, and that $b_{3}\geq c_{2}$.  Then
\begin{eqnarray*}
{\rm cardinal}\, {\rm Min}(A/I)\leq (c_{1}c_{2}-n_{3})/2.
\end{eqnarray*}
}\end{Remark}

\section{HN ideals are usually radical}\label{radical}

As in the previous section, $A=k[x_{1},x_{2},x_{3}]$ is the polynomial
ring with three variables over a field $k$. We start with the
following result.

\begin{Theorem}\label{theorem-radical}
Let $I$ be an HN ideal. If $k$ has characteristic zero (or large
enough), then ${\rm rad}(I)=I$.
\end{Theorem}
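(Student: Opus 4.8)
The plan is to reduce the problem of $I$ being radical to the problem of $(v) = (v_1,v_2)$ being radical, and then to analyze $(v)$ directly as a product/intersection of binomial factors. By Example~\ref{as*}, $v_1,v_2$ is a regular sequence and in particular $\mathrm{grade}(v) = 2$; the variables $x_1,x_2,x_3$ form a regular sequence in $A = k[x_1,x_2,x_3]$. Hence by the discussion following Lemma~\ref{ivu1} (using Lemma~\ref{ivu1}, Corollary~\ref{Icapu=v} and Proposition~\ref{ifgradev2}), $(v)$ is radical if and only if both $I$ and $(u) = (x_1^{a_1}, x_2^{b_2})$ are radical. Since $a_1, b_2 \geq 1$, the ideal $(u)$ need not be radical; so this exact equivalence is not directly what I want. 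Instead I would work in the localization $A_{x_1x_2}$ (or at each relevant minimal prime), where $I$ and $(v)$ agree up to the regular element issue: recall $I A_{x_1} = (v) A_{x_1}$ (Remark~\ref{aciingeneral}), and $x_1, x_2$ are regular modulo $I$ (Proposition~\ref{ifgradev2}(c)). Since no minimal prime of $I$ contains $x_1$ or $x_2$, whether $I$ is radical can be checked after inverting $x_1 x_2$, where $I = (v)$. So it suffices to prove that $(v) A_{x_1 x_2}$ is radical, equivalently that $(v)$ has no embedded components (automatic, being unmixed of grade $2$) and is reduced at each of its minimal primes not containing $x_1 x_2$ — but all its minimal primes avoid $x_1 x_2$ by the same grade-$2$ argument, so it is enough that $(v)$ itself is radical away from $x_1 x_2$, i.e. that $(v)$ is generically reduced, which combined with unmixedness gives reducedness.

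The heart of the matter is therefore to show $(v_1, v_2)$ is generically reduced. Here I would use the homogeneous grading from Example~\ref{as*} (giving $x_i$ weight $m_i(I)$) so that everything is graded, and analyze the Jacobian. Consider the Jacobian matrix of $(v_1, v_2)$ with respect to $x_1, x_2, x_3$; at a minimal prime $\mathfrak{q}$ of $(v)$, the ring $A_{\mathfrak q}/(v)A_{\mathfrak q}$ is a field (since $(v)$ is Cohen–Macaulay of codimension $2$, being a complete intersection), and it is reduced iff the Jacobian has rank $2$ there, i.e. iff $A/(v)$ is smooth at $\mathfrak q$. One computes the three $2\times 2$ minors of the Jacobian of $(v_1,v_2)$; using $v_1 = x_1^{c_1} - x_2^{b_2}x_3^{a_3}$ and $v_2 = x_2^{c_2} - x_1^{a_1}x_3^{b_3}$, these minors are, up to signs and monomial factors, $c_1 c_2 x_1^{c_1-1}x_2^{c_2-1} - a_1 b_2 x_1^{a_1-1}x_2^{b_2-1}x_3^{a_3+b_3}$, etc., and the key point is that the characteristic-zero (or $\mathrm{char}(k) > $ all exponents appearing) hypothesis guarantees the integer coefficients $c_1 c_2$, $a_1 b_2$, $\dots$ do not vanish in $k$. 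Then at a minimal prime one argues: if all Jacobian minors vanished at $\mathfrak q$, combining the vanishing relations with $v_1, v_2 \in \mathfrak q$ forces, after multiplying through by suitable powers of $x_i$ and using that the $x_i$ are nonzerodivisors mod $\mathfrak q$, that one of the pure powers $x_i^{c_i}$ lies in $\mathfrak q$, hence $x_i \in \mathfrak q$ — contradicting $\mathrm{grade}(\mathfrak q) = 2$ on a polynomial ring with $(v) \subseteq (x_1,x_2)$ of height exactly $2$.

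An alternative, possibly cleaner route for the same step: factor $x_2^{m_3} - x_3^{m_2} \in I$ and use Lemma~\ref{mainlemma} together with the bound of Theorem~\ref{bound} to pin down the minimal primes very explicitly, then show each appears with multiplicity one in $(v)$ via the associativity formula of Remark~\ref{af} — in characteristic zero (or large) the relevant multiplicity computations at each minimal prime return $1$ because the binomial factors $x_2^{p_3} - \zeta x_3^{p_2}$ defining the minimal primes are distinct and separable (this is exactly where large characteristic is used, to ensure $x_2^{p_3} - x_3^{p_2}$ and its relatives have no repeated roots). Either way, the main obstacle is the explicit verification that $(v)$ — equivalently $I$ localized at $x_1 x_2$ — is generically reduced: one must carefully track the monomial factors that appear when forming minors or when dividing out common factors, and confirm that the only way to kill the "leading" integer coefficient is in small positive characteristic, which the hypothesis excludes. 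The final assembly is then immediate: $(v)$ unmixed (Proposition~\ref{ifgradev2}(b)) and generically reduced implies $(v)$ radical, hence by Lemma~\ref{ivu1}, $\mathrm{rad}(I) = \mathrm{rad}((v):u_1) \subseteq \mathrm{rad}(v):u_1 = (v):u_1 = I$, giving $\mathrm{rad}(I) = I$.
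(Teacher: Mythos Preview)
Your approach is fundamentally the same as the paper's---use the Jacobian criterion on $(v)$ in characteristic zero (or large)---and your identification of $J_1$ as the key minor is right: once you factor $J_1 = x_1^{a_1-1}x_2^{b_2-1}(-a_1b_2 D + (c_1c_2 - a_1b_2)x_1^{b_1}x_2^{a_2})$ and note $D \in I$, the survival of the integer $c_1c_2 - a_1b_2 = m_3(I)$ is exactly where the characteristic hypothesis enters. The paper packages this as a colon-ideal computation: $\mathrm{rad}(v) = (v):\mathrm{Jac}(v) \subseteq (v):J_1$, and a short chain using Proposition~\ref{ifgradev2} and Corollary~\ref{Icapu=v} gives $(v):J_1 = I \cap (x_1,x_2) \subseteq I$; then Lemma~\ref{ivu1} finishes as you wrote.

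There is, however, a genuine slip in your assembly. You assert that ``all its minimal primes avoid $x_1x_2$'' and conclude that $(v)$ is generically reduced, hence radical. This is false: $(x_1,x_2)$ \emph{is} a minimal prime of $(v)$ (it is the radical of $(u)$, and $\mathrm{Ass}(A/(v)) = \mathrm{Ass}(A/I) \cup \mathrm{Ass}(A/(u))$ by the remark at the end of Section~\ref{licci}), and $(v)$ is \emph{not} reduced there whenever $a_1 > 1$ or $b_2 > 1$---indeed you noted yourself that $(u)$ need not be radical. So your final sentence, which uses $\mathrm{rad}(v) = (v)$, does not go through.

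The fix is already in your first paragraph: stay with the localization. You correctly argued that $IA_{x_1x_2} = (v)A_{x_1x_2}$ and that $x_1x_2$ is regular modulo $I$, so $A/I \hookrightarrow (A/I)_{x_1x_2}$. Your Jacobian argument shows $J_1$ is a unit at each minimal prime of $I$ (these all avoid $x_1,x_2,x_3$), hence $(v)A_{x_1x_2}$---equivalently $IA_{x_1x_2}$---is generically reduced; since $I$ is unmixed this already gives $I$ radical, without ever needing $(v)$ itself to be radical and without the final appeal to Lemma~\ref{ivu1}. Alternatively, replace ``$(v)$ radical'' by the weaker ``$\mathrm{rad}(v) \subseteq I$'', which is what the paper proves and is all that the closing chain $\mathrm{rad}(I) \subseteq \mathrm{rad}(v):u_1 \subseteq I:u_1 = I$ requires.
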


\begin{proof}
By Example~\ref{as*}, $v_{1},v_{2}$ is a regular sequence. Thus ${\rm
  rad}(v)=(v):{\rm Jac}(v)$ where ${\rm Jac}(v)$ is the Jacobian ideal
of $(v)$, i.e., the ideal generated by the $2\times 2$ minors of the
Jacobian matrix $\partial (v_{1},v_{2})/\partial (x_{1},x_{2},x_{3})$,
provided that $k$ has characteristic zero or sufficiently large (see
\cite[Theorem~5.4.2, page~131 and comments on
  page~130]{vasconcelos}). Concretely, setting
\begin{eqnarray*}
&&J_{1}=c_{1}c_{2}x_{1}^{c_{1}-1}x_{2}^{c_{2}-1}-
a_{1}b_{2}x_{1}^{a_{1}-1}x_{2}^{b_{2}-1}x_{3}^{c_{3}},\\ 
&&J_{2}=b_{3}c_{1}x_{1}^{a_{1}+c_{1}-1}x_{3}^{b_{3}-1}+
a_{1}a_{3}x_{1}^{a_{1}-1}x_{2}^{b_{2}}x_{3}^{c_{3}-1},\\ 
&&J_{3}=b_{2}b_{3}x_{1}^{a_{1}}x_{2}^{b_{2}-1}x_{3}^{c_{3}-1}+
a_{3}c_{2}x_{2}^{b_{2}+c_{2}-1}x_{3}^{a_{3}-1},
\end{eqnarray*}
these being the three generators of ${\rm Jac}(v)$,
\begin{eqnarray*}
{\rm rad}(v)=(v):{\rm Jac}(v)=(v):(J_{1},J_{2},J_{3})=
[(v):J_{1}]\cap [(v):J_{2}]\cap [(v):J_{3}]\subseteq (v):J_{1}.
\end{eqnarray*}
Write $J_{1}=x_{1}^{a_{1}-1}x_{2}^{b_{2}-1}h$, with
$h=-a_{1}b_{2}D+sx_{1}^{b_{1}}x_{2}^{a_{2}}\in A$,
$D=x_{3}^{c_{3}}-x_{1}^{b_{1}}x_{2}^{a_{2}}$ and
$s=c_{1}c_{2}-a_{1}b_{2}\in \bz$. Now, by Proposition~\ref{ifgradev2}
and Corollary~\ref{Icapu=v}, and using the general rule of quotient ideals
that $L:fg=(L:f):g$, we have
\begin{eqnarray*}
&&(v):J_{1}=((v):x_{1}^{a_{1}-1}x_{2}^{b_{2}-1}):h=[(I\cap
    (u)):x_{1}^{a_{1}-1}x_{2}^{b_{2}-1}]:h=
  \\&&[(I:x_{1}^{a_{1}-1}x_{2}^{b_{2}-1})\cap
    ((u):x_{1}^{a_{1}-1}x_{2}^{b_{2}-1})]:h=(I\cap (x_{1},x_{2})):h=
  \\ &&(I:h)\cap ((x_{1},x_{2}):h)=(I:x_{1}^{b_{1}}x_{2}^{a_{2}})\cap
  ((x_{1},x_{2}):x_{3}^{c_{3}})=I\cap (x_{1},x_{2})\subseteq I.
  \end{eqnarray*}
Therefore ${\rm rad}(v)\subseteq I$. By Lemma~\ref{ivu1}, ${\rm
  rad}(I)={\rm rad}((v):u_{1})\subseteq {\rm rad}(v):u_{1}\subseteq
I:u_{1}=I$.
\end{proof}

\begin{Remark}{\rm 
In particular, ${\rm rad}(v)={\rm rad}(I)\cap {\rm rad}(u)=I\cap
(x_{1},x_{2})=(v_{1},v_{2},x_{1}D,x_{2}D)$.  }\end{Remark}

\begin{Example}{\rm 
Let $I$ be the HN ideal considered in Example~\ref{char2}. By
Theorem~\ref{theorem-radical}, if $k$ has characteristic zero, $I$ is
radical. In fact, we have shown that, if ${\rm char}(k)\neq 2$, then
$I$ is radical, whereas if ${\rm char}(k)=2$, then $I$ is not radical.
}\end{Example}

This fact holds rather more generally (see the discussion in
Example~\ref{again} below). Before coming to this, we need the
following two results.

\begin{Proposition}\label{rv}
Let $I$ be an HN ideal and $m(I)=(m_{1},m_{2},m_{3})\in\bn^{3}$ its
associated integer vector. If $a_{1}=1$, then $(A/I)_{x_{2}x_{3}}$ is
isomorphic to
$(k[x_{2},x_{3}]/(x_{2}^{m_{3}}-x_{3}^{m_{2}}))_{x_{2}x_{3}}$. In
particular, their total quotient rings $Q(A/I)$ and
$Q(k[x_{2},x_{3}]/(x_{2}^{m_{3}}-x_{3}^{m_{2}}))$ are isomorphic.
Furthermore, the cardinality of ${\rm Min}(A/I)$ is equal to the
cardinality of a maximal complete set of orthogonal idempotents of
$Q(k[x_{2},x_{3}]/(x_{2}^{m_{3}}-x_{3}^{m_{2}}))$.
\end{Proposition}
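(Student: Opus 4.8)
The plan is to reduce everything, after inverting $x_{2}$ and $x_{3}$, to the single relation obtained by eliminating $x_{1}$, the elimination being possible precisely because $a_{1}=1$. The starting point is the identity $x_{2}^{b_{2}}D=-x_{3}^{b_{3}}v_{1}-x_{1}^{b_{1}}v_{2}$ noted in Section~\ref{introduction}: once $x_{2}$ is inverted, $x_{2}^{b_{2}}$ is a unit, so $D\in(v_{1},v_{2})A_{x_{2}}$ and hence $(A/I)_{x_{2}x_{3}}=A_{x_{2}x_{3}}/(v_{1},v_{2})$.

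Next I would eliminate $x_{1}$. In $A_{x_{2}x_{3}}=k[x_{2},x_{3}]_{x_{2}x_{3}}[x_{1}]$, since $a_{1}=1$ we have $v_{2}=x_{2}^{c_{2}}-x_{1}x_{3}^{b_{3}}=-x_{3}^{b_{3}}\bigl(x_{1}-x_{2}^{c_{2}}x_{3}^{-b_{3}}\bigr)$, a monic linear polynomial in $x_{1}$ up to the unit $-x_{3}^{b_{3}}$; hence the substitution $x_{1}\mapsto x_{2}^{c_{2}}x_{3}^{-b_{3}}$ induces an isomorphism $A_{x_{2}x_{3}}/(v_{2})\cong k[x_{2},x_{3}]_{x_{2}x_{3}}$. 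Under it $v_{1}=x_{1}^{c_{1}}-x_{2}^{b_{2}}x_{3}^{a_{3}}$ maps to $x_{2}^{c_{1}c_{2}}x_{3}^{-c_{1}b_{3}}-x_{2}^{b_{2}}x_{3}^{a_{3}}$, which after multiplication by the unit $x_{2}^{-b_{2}}x_{3}^{c_{1}b_{3}}$ becomes $x_{2}^{c_{1}c_{2}-b_{2}}-x_{3}^{a_{3}+c_{1}b_{3}}$. A short computation using $c_{i}=a_{i}+b_{i}$, $a_{1}=1$, and the formula $m(I)=(c_{2}c_{3}-a_{2}b_{3},\,c_{1}c_{3}-a_{3}b_{1},\,c_{1}c_{2}-a_{1}b_{2})$ of Remark~\ref{edei} then identifies these exponents: $c_{1}c_{2}-b_{2}=m_{3}$ and $a_{3}+c_{1}b_{3}=a_{3}(c_{1}-b_{1})+c_{1}b_{3}=c_{1}c_{3}-a_{3}b_{1}=m_{2}$. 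Therefore $(A/I)_{x_{2}x_{3}}\cong k[x_{2},x_{3}]_{x_{2}x_{3}}/(x_{2}^{m_{3}}-x_{3}^{m_{2}})=\bigl(k[x_{2},x_{3}]/(x_{2}^{m_{3}}-x_{3}^{m_{2}})\bigr)_{x_{2}x_{3}}$, which is the first assertion.

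For the total quotient rings, I would observe that $x_{2}x_{3}$ is a nonzerodivisor on $A/I$ by Proposition~\ref{ifgradev2}$(c)$ (applicable since $x_{1},x_{2},x_{3}$ is a regular sequence in $A$ and ${\rm grade}(v)=2$ by Example~\ref{as*}), and also on $k[x_{2},x_{3}]/(x_{2}^{m_{3}}-x_{3}^{m_{2}})$, because $x_{2}^{m_{3}}-x_{3}^{m_{2}}$ has pure terms in $x_{2}$ and in $x_{3}$, hence is divisible by neither variable in the UFD $k[x_{2},x_{3}]$, while the quotient, being a hypersurface ring, is Cohen--Macaulay and thus unmixed. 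Inverting a nonzerodivisor does not change the total quotient ring (if $t$ is a nonzerodivisor of $R$ then $R\subseteq R_{t}\subseteq Q(R)$ and every nonzerodivisor of $R_{t}$ is invertible in $Q(R)$, so $Q(R_{t})=Q(R)$), so $Q(A/I)=Q((A/I)_{x_{2}x_{3}})\cong Q\bigl((k[x_{2},x_{3}]/(x_{2}^{m_{3}}-x_{3}^{m_{2}}))_{x_{2}x_{3}}\bigr)=Q\bigl(k[x_{2},x_{3}]/(x_{2}^{m_{3}}-x_{3}^{m_{2}})\bigr)$.

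Finally, for the idempotent count: by Proposition~\ref{ifgradev2}$(b)$, $A/I$ is grade-unmixed, so it has no embedded primes and its nonzerodivisors are exactly the complement of the finitely many minimal primes $\fq_{1},\ldots,\fq_{t}$; hence the only primes of $Q(A/I)$ are the $\fq_{i}Q(A/I)$, so $Q(A/I)$ is a zero-dimensional Noetherian, i.e. Artinian, ring, which decomposes as a product of $t$ local rings. A maximal complete set of orthogonal idempotents of such a product has precisely $t$ members, the primitive idempotents of the factors; transporting this count along the isomorphism $Q(A/I)\cong Q(k[x_{2},x_{3}]/(x_{2}^{m_{3}}-x_{3}^{m_{2}}))$ of the previous paragraph gives the last assertion, with $t={\rm cardinal}\,{\rm Min}(A/I)$. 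I expect the only point requiring real care is the exponent bookkeeping identifying the image of $v_{1}$ with $x_{2}^{m_{3}}-x_{3}^{m_{2}}$; the remaining steps are routine localization theory together with the structure theory of Artinian rings.
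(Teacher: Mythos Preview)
Your proof is correct and follows essentially the same route as the paper's: eliminate $x_{1}$ via $v_{2}$ (since $a_{1}=1$) after inverting $x_{2}x_{3}$, identify the image of the remaining generator with $x_{2}^{m_{3}}-x_{3}^{m_{2}}$ via the same exponent computation, and then pass to total quotient rings using that $x_{2}x_{3}$ is regular on both sides. Your one cosmetic shortcut---disposing of $D$ at the outset via the syzygy $x_{2}^{b_{2}}D=-x_{3}^{b_{3}}v_{1}-x_{1}^{b_{1}}v_{2}$ rather than computing its image separately as the paper does---and your direct appeal to Artinian structure theory for the idempotent count (where the paper quotes Kunz on rational function rings of reducible varieties) are harmless variations on the same argument.
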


\begin{proof}
One has $I=(v_{1},v_{2},D)$, with
$v_{1}=x_{1}^{c_{1}}-x_{2}^{b_{2}}x_{3}^{a_{3}}$,
$v_{2}=x_{2}^{c_{2}}-x_{1}^{a_{1}}x_{3}^{b_{3}}$,
$D=x_{3}^{c_{3}}-x_{1}^{b_{1}}x_{2}^{a_{2}}$. In $A_{x_{2}x_{3}}$,
since $a_{1}=1$,
$x_{1}=x_{2}^{c_{2}}x_{3}^{-b_{3}}-v_{2}x_{3}^{-b_{3}}$. Thus, in
$A_{x_{2}x_{3}}$,
\begin{eqnarray*}
v_{1}&&=(x_{2}^{c_{2}}x_{3}^{-b_{3}}-v_{2}x_{3}^{-b_{3}})^{c_{1}}
-x_{2}^{b_{2}}x_{3}^{a_{3}}=x_{2}^{c_{1}c_{2}}x_{3}^{-b_{3}c_{1}}
-x_{2}^{b_{2}}x_{3}^{a_{3}}+v_{2}p=\\ &&=x_{2}^{b_{2}}x_{3}^{-b_{3}c_{1}}
(x_{2}^{c_{1}c_{2}-b_{2}}-x_{3}^{a_{3}+b_{3}c_{1}})+v_{2}p=
x_{2}^{b_{2}}x_{3}^{-b_{3}c_{1}}(x_{2}^{m_{3}}-x_{3}^{m_{2}})+v_{2}p,\;
\mbox{ and } \\ D&&=x_{3}^{c_{3}}-
(x_{2}^{c_{2}}x_{3}^{-b_{3}}-v_{2}x_{3}^{-b_{3}})^{b_{1}}x_{2}^{a_{2}}=
x_{3}^{c_{3}}-x_{2}^{a_{2}+b_{1}c_{2}}x_{3}^{-b_{1}b_{3}}+v_{2}q=\\&&
=x_{3}^{-b_{1}b_{3}}(x_{3}^{b_{1}b_{3}+c_{3}}-x_{2}^{a_{2}+b_{1}c_{2}})+v_{2}q=
x_{3}^{-b_{1}b_{3}}(x_{3}^{m_{2}}-x_{2}^{m_{3}})+v_{2}q,
\end{eqnarray*}
with $p,q\in A_{x_{2}x_{3}}$. Therefore
$IA_{x_{2}x_{3}}=(x_{2}^{m_{3}}-x_{3}^{m_{2}},v_{2})A_{x_{2}x_{3}}=
(x_{2}^{m_{3}}-x_{3}^{m_{2}},x_{1}-x_{2}^{c_{2}}x_{3}^{-b_{3}})A_{x_{2}x_{3}}$.
So $(A/I)_{x_{2}x_{3}}\cong
(k[x_{2},x_{3}]/(x_{2}^{m_{3}}-x_{3}^{m_{2}}))_{x_{2}x_{3}}$. Since
$x_{2},x_{3}\in A$ are regular modulo $I$ and each of $x_{2},x_{3}\in
k[x_{2},x_{3}]$ is regular modulo $(x_{2}^{m_{3}}-x_{3}^{m_{2}})$,
the total quotient ring of $A/I$ is isomorphic to the total quotient
ring of $k[x_{2},x_{3}]/(x_{2}^{m_{3}}-x_{3}^{m_{2}})$.

Let $V$ be the affine $k$-variety defined by $I$ and $R(V)$ the ring
of rational functions of $V$, i.e., the total quotient ring $Q(A/I)$
of $A/I$ (see e.g. \cite[III, Proposition~3.4]{kunz}). So $R(V)\cong
Q(k[x_{2},x_{3}]/(x_{2}^{m_{3}}-x_{3}^{m_{2}}))$.  Let
$V=V_{1}\cup\ldots \cup V_{r}$ be the decomposition of $V$ into
irreducible components, which induces an isomorphism $R(V)\cong
R(V_{1})\times \ldots \times R(V_{r})$ (see e.g. \cite[III,
  Proposition~2.8]{kunz}). So the number of minimal primes of $I$ is
equal to the number of elements in a maximal complete set of
orthogonal idempotents of $R(V)\cong
Q(k[x_{2},x_{3}]/(x_{2}^{m_{3}}-x_{3}^{m_{2}}))$.
\end{proof}

\begin{Proposition}\label{cardd}
Let $I$ be an HN ideal and $m(I)\in\bn^{3}$ its associated integer
vector. If $a_{1}=1$ and ${\rm gcd}(m_{2}(I),m_{3}(I))=d$ is prime,
then ${\rm gcd}(m(I))=d$ as well and $I$ is not a prime
ideal. Moreover, ${\rm cardinal}\, {\rm Min}(A/I)\leq d$. Furthermore,
if ${\rm char}(k)=d$, then $I$ is primary, is not radical, and
$\fp_{m(I)}^{(d)}\subsetneq I\subsetneq \fp_{m(I)}$.
\end{Proposition}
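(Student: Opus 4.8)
The plan is to settle the first three assertions quickly from results already at hand and then to concentrate the work on the characteristic-$d$ statement. Write $m(I)=(m_{1},m_{2},m_{3})$.

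I would first compute ${\rm gcd}(m(I))$. Since $a_{1}=1$, the middle equation of system~(\ref{m-system1}) reads $c_{2}m_{2}=m_{1}+b_{3}m_{3}$, so $m_{1}=c_{2}m_{2}-b_{3}m_{3}$ is divisible by $d={\rm gcd}(m_{2},m_{3})$; hence $d\mid {\rm gcd}(m(I))$, while trivially ${\rm gcd}(m(I))\mid {\rm gcd}(m_{2},m_{3})=d$, so ${\rm gcd}(m(I))=d$. As $d$ is prime, $d\geq 2$, so $I$ is not prime by Theorem~\ref{theorem-prime}. The bound ${\rm cardinal}\,{\rm Min}(A/I)\leq d$ then drops out of Theorem~\ref{bound}: there $s=\min\{{\rm gcd}(m_{i},m_{j})\mid i<j\}\leq {\rm gcd}(m_{2},m_{3})=d$, so $1+s(d-1)/d\leq d$.

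For the characteristic-$d$ part, set $n=m(I)/d=(n_{1},n_{2},n_{3})$, so ${\rm gcd}(n)=1$ and, since ${\rm gcd}(m_{2},m_{3})=d$, also ${\rm gcd}(n_{2},n_{3})=1$; by Remark~\ref{pm=pn}, $\fp_{m(I)}=\fp_{n}$, which by Lemma~\ref{unique} is the unique Herzog ideal containing $I$, with $I\subseteq\fp_{n}$. Put $w=x_{2}^{n_{3}}-x_{3}^{n_{2}}$; then $w\in\fp_{n}$ (as $\varphi_{n}$ sends it to $t^{n_{2}n_{3}}-t^{n_{3}n_{2}}=0$), it is irreducible in $k[x_{2},x_{3}]$ because ${\rm gcd}(n_{2},n_{3})=1$, and in characteristic $d$ one has $x_{2}^{m_{3}}-x_{3}^{m_{2}}=(x_{2}^{n_{3}})^{d}-(x_{3}^{n_{2}})^{d}=w^{d}$. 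From the proof of Proposition~\ref{rv} (applicable since $a_{1}=1$), $IA_{x_{2}x_{3}}=(x_{2}^{m_{3}}-x_{3}^{m_{2}},v_{2})A_{x_{2}x_{3}}=(w^{d},z)A_{x_{2}x_{3}}$, where $z:=x_{1}-x_{2}^{c_{2}}x_{3}^{-b_{3}}=-x_{3}^{-b_{3}}v_{2}$. The key local computation is then $\fp_{n}A_{x_{2}x_{3}}=(w,z)A_{x_{2}x_{3}}$: indeed $(w,z)\subseteq\fp_{n}A_{x_{2}x_{3}}$ since $w\in\fp_{n}$ and $z=-x_{3}^{-b_{3}}v_{2}\in IA_{x_{2}x_{3}}\subseteq\fp_{n}A_{x_{2}x_{3}}$, while $A_{x_{2}x_{3}}/(w,z)\cong(k[x_{2},x_{3}]/(w))_{x_{2}x_{3}}$ (eliminating $x_{1}$) is a one-dimensional domain, so $(w,z)$ is a height-$2$ prime of $A_{x_{2}x_{3}}$ and hence equals the height-$2$ prime $\fp_{n}A_{x_{2}x_{3}}$.

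The conclusion follows by comparing the three ideals. One has $(\fp_{n}A_{x_{2}x_{3}})^{d}=(w,z)^{d}\subseteq(w^{d},z)=IA_{x_{2}x_{3}}$, because each generator $w^{i}z^{d-i}$ with $i<d$ lies in $(z)$. Next, Proposition~\ref{rv} applied in characteristic $d$ shows $k[x_{2},x_{3}]/(x_{2}^{m_{3}}-x_{3}^{m_{2}})=k[x_{2},x_{3}]/(w^{d})$ has a single minimal prime, so ${\rm cardinal}\,{\rm Min}(A/I)=1$; with the absence of embedded primes (Proposition~\ref{ifgradev2}$(b)$, since $v_{1},v_{2}$ is a regular sequence by Example~\ref{as*}) this forces $I$ to be $\fp_{n}$-primary, hence $I=IA_{\fp_{n}}\cap A$, so that $\fp_{n}^{(d)}=\fp_{n}^{d}A_{\fp_{n}}\cap A\subseteq IA_{\fp_{n}}\cap A=I$ after localizing the previous containment at $\fp_{n}$. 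Since $I$ is non-prime with prime radical $\fp_{n}$, it is not radical and $I\subsetneq\fp_{n}=\fp_{m(I)}$. Finally $\fp_{n}^{(d)}\subsetneq I$, for otherwise $IA_{\fp_{n}}=\fp_{n}^{d}A_{\fp_{n}}=(w,z)^{d}A_{\fp_{n}}$, contradicting $z\in(w^{d},z)A_{\fp_{n}}=IA_{\fp_{n}}$ together with $z\notin(w,z)^{d}A_{\fp_{n}}$ (in the two-dimensional regular local ring $A_{\fp_{n}}$ the pair $w,z$ is a regular system of parameters and $d\geq 2$, so $z$ has order $1<d$). I expect the main difficulty to lie not in any single ring-theoretic input --- all of Propositions~\ref{ifgradev2} and~\ref{rv} and Theorems~\ref{theorem-prime} and~\ref{bound} are in place --- but in the careful bookkeeping of $I$, $\fp_{n}$ and $\fp_{n}^{(d)}$ under localization, in particular pinning down the ideal identities $IA_{x_{2}x_{3}}=(w^{d},z)$ and $\fp_{n}A_{x_{2}x_{3}}=(w,z)$.
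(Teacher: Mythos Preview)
Your proof is correct, and in a couple of places it improves on the paper's argument. The most notable difference is in the opening: you read off $m_{1}=c_{2}m_{2}-b_{3}m_{3}$ directly from system~(\ref{m-system1}) (using $a_{1}=1$) to get $d\mid m_{1}$ and hence ${\rm gcd}(m(I))=d$, and then invoke Theorem~\ref{theorem-prime} for non-primality in all characteristics at once. The paper instead proves non-primality first --- via the factorization of $x_{2}^{m_{3}}-x_{3}^{m_{2}}$ and the Chinese remainder theorem when ${\rm char}(k)\neq d$, and via the primary-but-not-radical conclusion when ${\rm char}(k)=d$ --- and only then deduces ${\rm gcd}(m(I))=d$; your route is shorter and avoids splitting on the characteristic. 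In the ${\rm char}(k)=d$ analysis the two arguments run in parallel but with different bookkeeping: the paper transports the primary structure through the isomorphism $(A/I)_{x_{2}x_{3}}\cong (B/\fa^{d})_{x_{2}x_{3}}$ of Proposition~\ref{rv}, reading off that the nilradical is prime with $d$-th power zero and that the colength of $I$ at $\fp_{m(I)}$ is exactly $d$, whereas you work with the explicit generators $IA_{x_{2}x_{3}}=(w^{d},z)$ and $\fp_{n}A_{x_{2}x_{3}}=(w,z)$ and the containment $(w,z)^{d}\subseteq (w^{d},z)$. For the strict inclusion $\fp_{m(I)}^{(d)}\subsetneq I$, the paper compares colengths in the regular local ring $A_{\fp_{m(I)}}$ (namely $d$ versus $d(d+1)/2$), while you exhibit the witness $z\in IA_{\fp_{n}}\setminus(\fp_{n}A_{\fp_{n}})^{d}$ using that $w,z$ is a regular system of parameters; both are clean, but the paper's version yields the extra information ${\rm length}((A/I)_{\fp_{m(I)}})=d$.
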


\begin{proof}
Let us prove first that $I$ is not a prime ideal. Note that once this
is established, then by Theorem~\ref{theorem-prime}, ${\rm
  gcd}(m(I))\neq 1$; but since ${\rm gcd}(m_{2}(I),m_{3}(I))=d$ is
prime, ${\rm gcd}(m(I))$ must be equal to $d$ as well. In particular,
by Theorem~\ref{bound}, ${\rm cardinal}\, {\rm Min}(A/I)\leq
1+(d(d-1)/d)=d$.

Write $m(I)=m=(m_{1},m_{2},m_{3})\in\bn^{3}$ and $m_{2}=dn_{2}$,
$m_{3}=dn_{3}$ with ${\rm gcd}(n_{2},n_{3})=1$. Then
\begin{eqnarray*}
x_{2}^{m_{3}}-x_{3}^{m_{2}}=(x_{2}^{n_{3}}-x_{3}^{n_{2}})(x_{2}^{(d-1)n_{3}}+
x_{2}^{(d-2)n_{3}}x_{3}^{n_{2}}+\ldots
+x_{2}^{n_{3}}x_{3}^{(d-2)n_{2}}+x_{3}^{(d-1)n_{2}}),
\end{eqnarray*}
where $x_{2}^{n_{3}}-x_{3}^{n_{2}}$ is irreducible (by e.g.
\cite[Lemma~10.15]{fossum} or Lemma~\ref{mainlemma} above). 

Set $B=k[x_{2},x_{3}]$, $\fa=(x_{2}^{n_{3}}-x_{3}^{n_{2}})$ and
$\fb=(x_{2}^{(d-1)n_{3}}+\ldots +x_{3}^{(d-1)n_{2}})$, the
corresponding ideals generated in $B$, and $C=B/\fa$. In $C$,
$x_{2}^{n_{3}}=x_{3}^{n_{2}}$ so that in $C$,
$x_{2}^{(d-1)n_{3}}+\ldots
+x_{3}^{(d-1)n_{2}}=dx_{2}^{(d-1)n_{3}}$. Hence, if ${\rm char}(k)\neq
d$, $\fa+\fb$ contains the element $x_{2}^{(d-1)n_{3}}$ and so
$(B/(\fa +\fb))_{x_{2}}$ becomes the zero ring. In other words, $\fa
B_{x_{2}}$ and $\fb B_{x_{2}}$ are relatively prime ideals of
$B_{x_{2}}$. By the Chinese remainder theorem (see e.g \cite[II,
  Prop.~1.7]{kunz}),
\begin{eqnarray*}
B_{x_{2}}/(x_{2}^{m_{3}}-x_{3}^{m_{2}})B_{x_{2}}\cong
(B_{x_{2}}/\fa B_{x_{2}})\times (B_{x_{2}}/\fb B_{x_{2}}).
\end{eqnarray*}
In particular, by Proposition~\ref{rv}, 
\begin{eqnarray*}
(A/I)_{x_{2}x_{3}}\cong
  (k[x_{2},x_{3}]/(x_{2}^{m_{3}}-x_{3}^{m_{2}}))_{x_{2}x_{3}}\cong
  (B/\fa)_{x_{2}x_{3}}\times (B/\fb)_{x_{2}x_{3}},
\end{eqnarray*}
which is not a domain. In particular, $I$ is not prime.

If ${\rm char}(k)=d$, then
$x_{2}^{m_{3}}-x_{3}^{m_{2}}=(x_{2}^{n_{3}}-x_{3}^{n_{2}})^{d}$.
Keeping the same notations as above, by Proposition~\ref{rv} again,
\begin{eqnarray*}
(A/I)_{x_{2}x_{3}}\cong
  (k[x_{2},x_{3}]/(x_{2}^{m_{3}}-x_{3}^{m_{2}}))_{x_{2}x_{3}}\cong
  (B/\fa^{d})_{x_{2}x_{3}},
\end{eqnarray*}
where $\fa$ is a prime ideal in $B$ and a complete intersection (in
fact principal), so $\fa^{d}=\fa^{(d)}$, the $d$-th symbolic power,
and $\fa^{d}$ is $\fa$-primary. Hence the nilradical of $B/\fa^{d}$ is
the (non-zero) prime ideal $\fa/\fa^{d}$, whose $d$-th power is zero;
in fact the colength of $\fa^{d}$ at $\fa$ is precisely $d$. Since
$x_{2}x_{3}$ lies outside $\fa$, this structure is preserved when we
localize at the element $x_{2}x_{3}$. In the light of the isomorphism
established above, we deduce that $(A/I)_{x_{2}x_{3}}$ has a
(non-zero) prime nilradical with $d$-th power equal to zero, so this
prime radical must therefore be $(\fp_{m}/I)_{x_{2}x_{3}}$. Since $I$
is unmixed and $x_{2}x_{3}$ is regular modulo $I$, we must have that
$I$ is $\fp_{m}$-primary and $\fp_{m}^{d}\subseteq I\subsetneq
\fp_{m}$. In particular, $\fp_{m}^{(d)}\subseteq I$ . Furthermore,
$\fp_{m}^{(d)}$ equals $I$ if and only if they have the same local
co-length at $\fp_{m}$. Now $A_{\mathfrak{p}_{m}}$ is a regular local
ring of dimension $2$, so the local colength of $A/\fp_{m}^{(d)}$ at
$\fp_{m}$ is $d(d+1)/2$. Hence the co-lengths agree if and only if
$d(d+1)/2=d$, i.e., $d =1$. So for $d>1$, $I$ properly contains
$\fp_{m}^{(d)}$.
\end{proof}

For the concrete case $d=2$, we obtain the following result.

\begin{Corollary}\label{2primes}
Let $I$ be an HN ideal and $m(I)\in\bn^{3}$ its associated integer
vector. Suppose that $a_{1}=1$ and ${\rm
  gcd}(m_{2}(I),m_{3}(I))=2$. In this case, if ${\rm char}(k)\neq 2$,
then $I$ is radical and equal to the intersection of exactly two prime
ideals; on the other hand, if ${\rm char}(k)=2$, then $I$ is primary,
is not radical, and $\fp_{m(I)}^{(2)}\subsetneq I\subsetneq
\fp_{m(I)}$.
\end{Corollary}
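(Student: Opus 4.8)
The strategy is to obtain the Corollary as the case $d=2$ of Proposition~\ref{cardd}, supplying the modest extra argument needed in residue characteristic $\neq 2$ to pin down exactly the minimal primes and to establish radicality.

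First I would note that $d:={\rm gcd}(m_{2}(I),m_{3}(I))=2$ is prime, so Proposition~\ref{cardd} applies verbatim with $a_{1}=1$: it gives that ${\rm gcd}(m(I))=2$, that $I$ is not prime, that ${\rm cardinal}\,{\rm Min}(A/I)\leq 2$, and, via Lemma~\ref{unique}, that $\fp_{m(I)}$ is a minimal prime of $I$. In the case ${\rm char}(k)=2=d$ this already finishes the proof: the final assertion of Proposition~\ref{cardd} with $d=2$ states precisely that $I$ is $\fp_{m(I)}$-primary, is not radical, and satisfies $\fp_{m(I)}^{(2)}\subsetneq I\subsetneq\fp_{m(I)}$.

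So assume ${\rm char}(k)\neq 2$. Write $m_{2}(I)=2n_{2}$, $m_{3}(I)=2n_{3}$ with ${\rm gcd}(n_{2},n_{3})=1$, and keep the notation $\fa=(x_{2}^{n_{3}}-x_{3}^{n_{2}})$, $\fb=(x_{2}^{n_{3}}+x_{3}^{n_{2}})$ of the proof of Proposition~\ref{cardd} (for $d=2$ the ideal $\fb$ is just the single binomial $x_{2}^{n_{3}}+x_{3}^{n_{2}}$). The key point I would stress is that \emph{both} $B/\fa$ and $B/\fb$, with $B=k[x_{2},x_{3}]$, are domains: $x_{2}^{n_{3}}-x_{3}^{n_{2}}$ is irreducible because ${\rm gcd}(n_{2},n_{3})=1$, and $x_{2}^{n_{3}}+x_{3}^{n_{2}}$ is, up to a scalar, the image of $x_{2}^{n_{3}}-x_{3}^{n_{2}}$ under the $k$-algebra automorphism of $B$ that negates $x_{3}$ (if $n_{2}$ is odd) or $x_{2}$ (if $n_{3}$ is odd), one of which applies since ${\rm gcd}(n_{2},n_{3})=1$; alternatively this irreducibility drops straight out of Lemma~\ref{mainlemma} applied to the factor $x_{2}^{n_{3}}+x_{3}^{n_{2}}$ of $x_{2}^{2n_{3}}-x_{3}^{2n_{2}}$. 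Since moreover ${\rm char}(k)\neq 2$ makes $\fa$ and $\fb$ coprime in $B_{x_{2}}$, the Chinese Remainder argument already carried out in the proof of Proposition~\ref{cardd} gives
\begin{eqnarray*}
(A/I)_{x_{2}x_{3}}\cong (B/\fa)_{x_{2}x_{3}}\times (B/\fb)_{x_{2}x_{3}},
\end{eqnarray*}
a product of two nonzero domains; hence $(A/I)_{x_{2}x_{3}}$ is reduced and has exactly two minimal primes.

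Finally I would transport this back to $A/I$. Since $x=x_{1},x_{2},x_{3}$ is a regular sequence in $A$ and ${\rm grade}(v)=2$ by Example~\ref{as*}, Proposition~\ref{ifgradev2}$(c)$ shows that $x_{2}x_{3}$ is regular modulo $I$; therefore $x_{2}x_{3}$ lies in no associated, a fortiori no minimal, prime of $I$, and localisation at $x_{2}x_{3}$ restricts to a bijection ${\rm Min}(A/I)\cong{\rm Min}((A/I)_{x_{2}x_{3}})$. Thus $I$ has exactly two minimal primes, one of them $\fp_{m(I)}$. Moreover the nilradical $N=\sqrt{I}/I\subseteq A/I$ satisfies $N_{x_{2}x_{3}}=0$ (the localisation is reduced) while $x_{2}x_{3}$ acts as a nonzerodivisor on $A/I$, hence on $N$; so $N=0$, i.e.\ $I$ is radical, and therefore $I$ is the intersection of its two distinct minimal primes. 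The one ingredient that goes beyond Proposition~\ref{cardd} itself is the primality of $\fb$ — equivalently, the irreducibility of the binomial $x_{2}^{n_{3}}+x_{3}^{n_{2}}$ — which is the automorphism (or Lemma~\ref{mainlemma}) observation above; everything else is a straightforward combination of Propositions~\ref{ifgradev2} and \ref{cardd}, Lemma~\ref{unique}, Example~\ref{as*}, and a routine localisation argument.
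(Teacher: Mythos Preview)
Your proof is correct and follows essentially the same route as the paper's: both invoke Proposition~\ref{cardd} for the case ${\rm char}(k)=2$, and for ${\rm char}(k)\neq 2$ factor $x_{2}^{m_{3}}-x_{3}^{m_{2}}=(x_{2}^{n_{3}}-x_{3}^{n_{2}})(x_{2}^{n_{3}}+x_{3}^{n_{2}})$, observe both factors are irreducible (the paper cites Lemma~\ref{mainlemma}/Fossum, you give the automorphism argument as an alternative), apply CRT together with Proposition~\ref{rv} to identify $(A/I)_{x_{2}x_{3}}$ with a product of two domains, and conclude via regularity of $x_{2}x_{3}$ modulo $I$. Your write-up is slightly more explicit about the bijection on minimal primes and the nilradical, but the substance is the same.
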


\begin{proof}
By Proposition~\ref{cardd}, we have only to show that $I$ is radical
whenever ${\rm char}(k)\neq 2$. Thus suppose that ${\rm char}(k)\neq
2$. Write $m(I)=(m_{1},m_{2},m_{3})\in\bn^{3}$ and $m_{2}=2n_{2}$,
$m_{3}=2n_{3}$ with ${\rm gcd}(n_{2},n_{3})=1$. Then
$x_{2}^{m_{3}}-x_{3}^{m_{2}}=(x_{2}^{n_{3}}-x_{3}^{n_{2}})
(x_{2}^{n_{3}}+x_{3}^{n_{2}})$ is a decomposition into prime factors
in $B=k[x_{2},x_{3}]$ (see e.g. \cite[Lemma~10.15]{fossum} or
Lemma~\ref{mainlemma} above). Let $\fa_{1}=
(x_{2}^{n_{3}}-x_{3}^{n_{2}})$ and
$\fa_{2}=(x_{2}^{n_{3}}+x_{3}^{n_{2}})$ be the corresponding prime
ideals generated in $B$. Then $\fa_{1}\neq \fa_{2}$ and, localising at
$x_{2}$, $\fa_{1}B_{x_{2}}$ and $\fa_{2}B_{x_{2}}$ are two relatively
prime ideals of $B_{x_{2}}$. By the Chinese remainder theorem (see e.g
\cite[II, Prop.~1.7]{kunz}),
\begin{eqnarray*}
B_{x_{2}}/(x_{2}^{m_{3}}-x_{3}^{m_{2}})B_{x_{2}}\cong
(B_{x_{2}}/(x_{2}^{n_{3}}-x_{3}^{n_{2}})B_{x_{2}})\times
(B_{x_{2}}/(x_{2}^{n_{3}}+x_{3}^{n_{2}})B_{x_{2}}).
\end{eqnarray*}
In particular, by Proposition~\ref{rv}, 
\begin{eqnarray*}
(A/I)_{x_{2}x_{3}}\cong
  (k[x_{2},x_{3}]/(x_{2}^{m_{3}}-x_{3}^{m_{2}}))_{x_{2}x_{3}}\cong
(B/\fa_{1})_{x_{2}x_{3}}\times (B/\fa_{2})_{x_{2}x_{3}},
\end{eqnarray*}
which is a reduced ring. Since $x_{2}x_{3}$ is regular modulo $I$,
$A/I$ is reduced and $I$ is radical.
\end{proof}

\begin{Example}\label{again}{\rm
Consider (again) the HN ideal $I$ of Example~\ref{char2}, which
satisfies the hypotheses of Corollary~\ref{2primes}, i.e., $a_{1}=1$
and ${\rm gcd}(m_{2}(I),m_{3}(I))=2$. Thus one can conclude that if
${\rm char}(k)\neq 2$, $I$ is radical and equal to the intersection of
two primes, whereas if ${\rm char}(k)=2$, then $I$ is primary, is not
radical, and $\fp_{m(I)}^{(2)}\subsetneq I\subsetneq
\fp_{m(I)}$. }\end{Example}

For the concrete case $d=3$, we have the following result.

\begin{Corollary}\label{3primes}
Let $I$ be an HN ideal and $m(I)\in\bn^{3}$ its associated integer
vector. Suppose that $a_{1}=1$ and ${\rm
  gcd}(m_{2}(I),m_{3}(I))=3$. In this case, if ${\rm char}(k)\neq 2,3$
and $k$ contains a square root of $-3$, then $I$ is radical and equal
to the intersection of exactly three prime ideals; if ${\rm
  char}(k)\neq 3$ and either ${\rm char}(k)=2$ or else $k$ does not
contain a square root of $-3$, then $I$ is radical and equal to the
intersection of exactly two prime ideals; finally, if ${\rm
  char}(k)=3$, then $I$ is primary, is not radical, and
$\fp_{m(I)}^{(3)}\subsetneq I\subsetneq \fp_{m(I)}$.
\end{Corollary}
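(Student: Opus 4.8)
The plan is to mimic, for the prime $d=3$, the arguments already used for $d=2$ in Corollary~\ref{2primes} and for a general prime $d$ in Proposition~\ref{cardd}. Since $3$ is prime, Proposition~\ref{cardd} applies directly and already yields ${\rm gcd}(m(I))=3$, that $I$ is not prime, that ${\rm cardinal}\, {\rm Min}(A/I)\le 3$, and --- when ${\rm char}(k)=3$ --- exactly the last clause of the corollary, namely that $I$ is $\fp_{m(I)}$-primary, is not radical, and $\fp_{m(I)}^{(3)}\subsetneq I\subsetneq\fp_{m(I)}$. So one reduces at once to the case ${\rm char}(k)\ne 3$, where it remains to prove that $I$ is radical and to determine exactly how many minimal primes it has. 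For this I would invoke Proposition~\ref{rv}: that number equals the cardinality of a maximal complete set of orthogonal idempotents of the total quotient ring of $R:=k[x_{2},x_{3}]/(x_{2}^{m_{3}}-x_{3}^{m_{2}})$, equivalently the number of distinct irreducible factors of $x_{2}^{m_{3}}-x_{3}^{m_{2}}$ in $k[x_{2},x_{3}]$; and $I$ will be radical once $R$ is reduced, because by Proposition~\ref{rv} the localisations of $A/I$ and of $R$ at $x_{2}x_{3}$ are isomorphic, $x_{2}x_{3}$ is a nonzerodivisor modulo $I$ (Proposition~\ref{ifgradev2}), and $A/I$ has no embedded primes (being Cohen-Macaulay of dimension $1$).

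The heart of the matter is then a factorisation. Writing $m_{2}=3n_{2}$, $m_{3}=3n_{3}$ with ${\rm gcd}(n_{2},n_{3})=1$, we have $x_{2}^{m_{3}}-x_{3}^{m_{2}}=(x_{2}^{n_{3}})^{3}-(x_{3}^{n_{2}})^{3}$. Setting $Y=x_{2}^{n_{3}}$, $Z=x_{3}^{n_{2}}$, Lemma~\ref{mainlemma} tells us that every non-unit factor of $x_{2}^{m_{3}}-x_{3}^{m_{2}}$ is a (standard-)homogeneous polynomial in $Y$ and $Z$ with non-zero pure terms, so (the substitution $Y\mapsto x_{2}^{n_{3}},\,Z\mapsto x_{3}^{n_{2}}$ being injective) factoring $x_{2}^{m_{3}}-x_{3}^{m_{2}}$ in $k[x_{2},x_{3}]$ reduces to factoring $Y^{3}-Z^{3}$ in $k[Y,Z]$. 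Since ${\rm char}(k)\ne 3$, $Y^{3}-Z^{3}=(Y-Z)(Y^{2}+YZ+Z^{2})$ with coprime factors; pulling back, $x_{2}^{m_{3}}-x_{3}^{m_{2}}$ is squarefree in $k[x_{2},x_{3}]$, hence $R$ is reduced and $I$ is radical by the reduction above.

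It remains to count distinct irreducible factors. The factor $Y-Z$ yields the irreducible $x_{2}^{n_{3}}-x_{3}^{n_{2}}$ (its quotient is the domain $k[t^{n_{2}},t^{n_{3}}]\subseteq k[t]$, using ${\rm gcd}(n_{2},n_{3})=1$), and the corresponding minimal prime of $I$ is the Herzog ideal $\fp_{m(I)}=\fp_{n}$, $n=m(I)/3$ (cf. Lemma~\ref{unique}). The quadratic $Y^{2}+YZ+Z^{2}$ splits over $k$ --- as $(Y-\alpha Z)(Y-\beta Z)$ with $\alpha+\beta=-1$, $\alpha\beta=1$ --- exactly when $T^{2}+T+1$ has a root in $k$; for ${\rm char}(k)\ne 2$ the quadratic formula shows this holds iff $k$ contains a square root of $-3$, and then $\alpha\ne\beta$ (otherwise $\sqrt{-3}=0$, impossible) and $\alpha,\beta\ne 1$ (otherwise $3=0$, impossible). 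Thus for ${\rm char}(k)\ne 2,3$ with $\sqrt{-3}\in k$, the polynomial $x_{2}^{m_{3}}-x_{3}^{m_{2}}$ has the three distinct irreducible factors $x_{2}^{n_{3}}-x_{3}^{n_{2}}$, $x_{2}^{n_{3}}-\alpha x_{3}^{n_{2}}$, $x_{2}^{n_{3}}-\beta x_{3}^{n_{2}}$, so $I$ is the intersection of exactly three primes; while if $T^{2}+T+1$ has no root in $k$ --- which happens for ${\rm char}(k)=2$, and also for ${\rm char}(k)\ne 2$ when $-3$ is a non-square --- then $Y^{2}+YZ+Z^{2}$, and hence $x_{2}^{2n_{3}}+x_{2}^{n_{3}}x_{3}^{n_{2}}+x_{3}^{2n_{2}}$ (again by Lemma~\ref{mainlemma}), is irreducible, so $x_{2}^{m_{3}}-x_{3}^{m_{2}}$ has exactly two distinct irreducible factors and $I$ is the intersection of exactly two primes. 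This gives the three announced cases.

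The routine but delicate point will be to transport these factorisation statements cleanly across Proposition~\ref{rv}: one needs distinct irreducible factors to be pairwise comaximal after inverting $x_{2}$ (so the Chinese remainder splitting used there applies), one needs ``$(A/I)_{x_{2}x_{3}}$ reduced'' and ``a product of the right number of domains'' to descend to $A/I$ itself via unmixedness, and one needs to identify the component coming from $Y-Z$ with $\fp_{m(I)}$ --- all of which are exactly the manipulations performed for $d=2$ in Corollary~\ref{2primes} and for a general prime $d$ in Proposition~\ref{cardd}, so no new difficulty arises there. The only genuinely new input is the elementary fact about when $T^{2}+T+1$ is irreducible over $k$, and here the real subtlety is that the ``square root of $-3$'' criterion is legitimate only when $2$ is invertible, which is why characteristic $2$ is listed separately and handled directly.
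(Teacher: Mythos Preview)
Your approach mirrors the paper's almost exactly: the paper also factors $x_{2}^{m_{3}}-x_{3}^{m_{2}}=(x_{2}^{n_{3}}-x_{3}^{n_{2}})(x_{2}^{2n_{3}}+x_{2}^{n_{3}}x_{3}^{n_{2}}+x_{3}^{2n_{2}})$, uses Lemma~\ref{mainlemma} to constrain any further factorisation of the quadratic piece (arriving at $\lambda^{2}-\lambda+1=0$), applies Proposition~\ref{rv} together with the Chinese remainder theorem after inverting $x_{2}$, and defers the characteristic-$3$ case to Proposition~\ref{cardd}. Your organisation is slightly tidier in that you invoke Proposition~\ref{cardd} at the outset rather than only at the end, and your reducedness argument (squarefreeness in the UFD $k[x_{2},x_{3}]$) is marginally more direct than the paper's explicit comaximality identity $3x_{2}^{2n_{3}}=(2x_{2}^{n_{3}}+x_{3}^{n_{2}})(x_{2}^{n_{3}}-x_{3}^{n_{2}})+(x_{2}^{2n_{3}}+x_{2}^{n_{3}}x_{3}^{n_{2}}+x_{3}^{2n_{2}})$, but the substance is the same.

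One genuine slip: your parenthetical ``which happens for ${\rm char}(k)=2$'' is false as written. Over $k=\mathbb{F}_{4}$ (or any characteristic-$2$ field containing a primitive cube root of unity) the polynomial $T^{2}+T+1$ \emph{does} split, so $x_{2}^{m_{3}}-x_{3}^{m_{2}}$ has three distinct irreducible factors and $I$ has three minimal primes, not two. This is in fact a flaw in the statement of the corollary itself---the paper's proof makes the same tacit assumption when it declares $\fb$ prime---so your argument faithfully reproduces the paper's reasoning; but be aware that the clause ``${\rm char}(k)=2$ implies exactly two primes'' requires the additional hypothesis that $k$ contain no primitive cube root of unity.
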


\begin{proof}
Write $m(I)=(m_{1},m_{2},m_{3})\in\bn^{3}$ and $m_{2}=3n_{2}$,
$m_{3}=3n_{3}$ with ${\rm gcd}(n_{2},n_{3})=1$. Then
$x_{2}^{m_{3}}-x_{3}^{m_{2}}$ decomposes as
$(x_{2}^{n_{3}}-x_{3}^{n_{2}})
(x_{2}^{2n_{3}}+x_{2}^{n_{3}}x_{3}^{n_{2}}+x_{3}^{2n_{2}})$, where
$x_{2}^{n_{3}}-x_{3}^{n_{2}}$ is irreducible (by e.g.
\cite[Lemma~10.15]{fossum} or Lemma~\ref{mainlemma} above). On the
other hand, any proper factor of
$x_{2}^{2n_{3}}+x_{2}^{n_{3}}x_{3}^{n_{2}}+x_{3}^{2n_{2}}$ is a proper
factor of $x_{2}^{m_{3}}-x_{3}^{m_{2}}$. Hence, by
Lemma~\ref{mainlemma} again, any decomposition of
$x_{2}^{2n_{3}}+x_{2}^{n_{3}}x_{3}^{n_{2}}+x_{3}^{2n_{2}}$ must be of
the form $(x_{2}^{n_{3}}+\lambda
x_{3}^{n_{2}})(x_{2}^{n_{3}}+\lambda^{-1} x_{3}^{n_{2}})$, $\lambda\in
k\setminus \{0\}$. Therefore $\lambda +\lambda^{-1}=1$ and hence
$\lambda^{2}-\lambda +1=0$.

Suppose now that ${\rm char}(k)\neq 2,3$ and that $k$ contains a
square root of $-3$. Take $\lambda_{0}\in k$ a solution of
$\lambda^{2}-\lambda +1=0$. Set $B=k[x_{2},x_{3}]$ and
$\fa_{1}=(x_{2}^{n_{3}}-x_{3}^{n_{2}})$,
$\fa_{2}=(x_{2}^{n_{3}}+\lambda_{0} x_{3}^{n_{2}})$ and
$\fa_{3}=(x_{2}^{n_{3}}+\lambda_{0}^{-1}x_{3}^{n_{2}})$ the distinct
prime ideals generated in $B$. Localising at $x_{2}$,
$\fa_{1}B_{x_{2}}$, $\fa_{2}B_{x_{2}}$ and $\fa_{3}B_{x_{2}}$ become
three pairwise relatively prime ideals of $B_{x_{2}}$. By the Chinese
remainder theorem (see e.g \cite[II, Prop.~1.7]{kunz}),
\begin{eqnarray*}
B_{x_{2}}/(x_{2}^{m_{3}}-x_{3}^{m_{2}})B_{x_{2}}\cong
(B_{x_{2}}/\fa_{1}B_{x_{2}})\times (B_{x_{2}}/\fa_{2}B_{x_{2}})\times
(B_{x_{2}}/\fa_{2}B_{x_{3}}).
\end{eqnarray*}
In particular, by Proposition~\ref{rv}, 
\begin{eqnarray*}
(A/I)_{x_{2}x_{3}}\cong
  (k[x_{2},x_{3}]/(x_{2}^{m_{3}}-x_{3}^{m_{2}}))_{x_{2}x_{3}}\cong
  (B/\fa_{1})_{x_{2}x_{3}}\times (B/\fa_{2})_{x_{2}x_{3}}\times
  (B/\fa_{3})_{x_{2}x_{3}},
\end{eqnarray*}
which is a reduced ring. By \cite[III, Proposition~4.23]{kunz},
$Q(A/I)$ is the product of three fields. So $I$ has exactly three
minimal primes. Moreover, since $(A/I)_{x_{2}x_{3}}$ is reduced and
$x_{2}x_{3}$ is regular modulo $I$, $A/I$ is reduced and $I$ is
radical. Thus $I$ is the intersection of exactly three prime ideals.

If ${\rm char}(k)\neq 3$ and either ${\rm char}(k)=2$ or else $k$ does
not contain a square root of $-3$, then
$\fa=(x_{2}^{n_{3}}-x_{3}^{n_{2}})$ and
$\fb=(x_{2}^{2n_{3}}+x_{2}^{n_{3}}x_{3}^{n_{2}}+x_{3}^{2n_{2}})$ are
two distinct prime ideals of $B=k[x_{2},x_{3}]$. Remark that
$3x_{2}^{2n_{3}}=(2x_{2}^{n_{3}}+x_{3}^{n_{2}})(x_{2}^{n_{3}}-x_{3}^{n_{2}})+
(x_{2}^{2n_{3}}+x_{2}^{n_{3}}x_{3}^{n_{2}}+x_{3}^{2n_{2}})$. Thus,
localizing at $x_{2}$, $\fa B_{x_{2}}$ and $\fb B_{x_{2}}$ become
two relatively prime ideals of $B_{x_{2}}$. Proceeding as before, one
deduces that $(A/I)_{x_{2}x_{3}}$ is reduced and that $Q(A/I)$ is the
product of two fields. So $I$ is radical and equal to the intersection
of exactly two prime ideals.

If ${\rm char}(k)=3$, then finish by applying Proposition~\ref{cardd}.
\end{proof}

\begin{Example}\label{again?}{\rm 
Let $I=(x_{1}^{4}-x_{2}^{2}x_{3}^{3},x_{2}^{5}-x_{1}x_{3}^{3},
x_{3}^{6}-x_{1}^{3}x_{2}^{3})$ be the HN ideal associated to
\begin{eqnarray*}
\cm=\left(\begin{array}{ccc}x_{1}^{}&x_{2}^{3}&x_{3}^{3}
  \\ x_{2}^{2}&x_{3}^{3}&x_{1}^{3}\end{array}\right).
\end{eqnarray*}
Here $a_{1}=1$ and $m(I)=m=(21,15,18)$, so ${\rm
  gcd}(m_{2},m_{3})=3$. Thus one can apply
Corollary~\ref{3primes}. For instance, if $k=\bc$, $I$ is radical and
equal to the intersection of three prime ideals, whereas if $k=\bq$,
$I$ is radical and equal to the intersection of two prime ideals. On
the other hand, if ${\rm char}(k)=3$, $I$ is primary, is not radical,
and $\fp_{m}^{(3)}\subsetneq I\subsetneq \fp_{m}$.

In any case, the Herzog ideal $\fp_{m}$ associated to $m=(21,15,18)$
is a minimal prime of $I$. A simple computation shows that
$\fp_{m}=(x_{1}^{3}-x_{2}^{3}x_{3}^{},x_{2}^{4}-x_{1}^{2}x_{3}^{},
x_{3}^{2}-x_{1}^{}x_{2}^{})$.

If $k=\bq$, {\sc Singular} (see \cite{singular}) gives for the minimal
prime of $I$ other than $\fp_{m}$ the ideal
\begin{eqnarray*}
(x_{1}^{4}-x_{2}^{2}x_{3}^{3},x_{2}^{5}-x_{1}^{}x_{3}^{3},
  x_{1}^{3}x_{3}+x_{1}x_{2}^{4}+x_{2}^{3}x_{3}^{2},
  x_{1}^{2}x_{2}^{2}+x_{1}x_{2}x_{3}^{2}+x_{3}^{4},
  x_{1}^{3}x_{2}+x_{1}^{2}x_{3}^{2}+x_{2}^{4}x_{3}),
\end{eqnarray*}
which is not a binomial ideal. We recall that, according to the work
of Eisenbud and Sturmfels \cite{es}, if $k=\bc$, all the associated
primes of $I$ must be binomial.

In this connection, we remark that the results of \cite{es}, and the
rich combinatorial and algorithmic theory of binomial ideals that grew
from it, could well throw light on the questions left open in this
paper. We intend to pursue this line of enquiry in future work.
}\end{Example}

\noindent {\sc Acknowledgement}. We gratefully acknowledge the support
of the Glasgow Mathematical Journal Trust Fund and the MEC Grant
MTM2007-67493 in carrying out this research, and should like to thank
the School of Mathematics, University of Edinburgh, and the
Departament de Matem\`atica Aplicada 1, Universitat Polit\`ecnica de
Catalunya, Barcelona, for their hospitality. We would like to thank
the referee for a careful reading of the paper that has helped to
improve its presentation.

{\small
}

\begin{center}
\parbox[t]{7cm}{\footnotesize

\noindent Liam O'Carroll

\noindent Maxwell Institute for Mathematical Sciences

\noindent School of Mathematics

\noindent University of Edinburgh

\noindent EH9 3JZ, Edinburgh, Scotland

\noindent L.O'Carroll@ed.ac.uk } 
\,
\parbox[t]{6.2cm}{\footnotesize

\noindent Francesc Planas-Vilanova

\noindent Departament de Matem\`atica Aplicada~1

\noindent Universitat Polit\`ecnica de Catalunya

\noindent Diagonal 647, ETSEIB

\noindent 08028 Barcelona, Catalunya

\noindent francesc.planas@upc.edu}
\end{center}
\end{document}